\theoremstyle{plain}
\newtheorem{theorem}{Theorem}[section]
\newtheorem{remark}[theorem]{Remark}
\newtheorem{definition}[theorem]{Definition}
\newtheorem{lemma}[theorem]{Lemma}
\newtheorem{assumption}[theorem]{Assumption}
\newtheorem{proposition}[theorem]{Proposition}
\theoremstyle{remark}
\newcommand{\rIm}{\operatorname{Im}}
\newcommand{\E}{{\mathbb E}}
\newcommand{\tr}{{\rm tr}}
\newcommand{\bx}{{\mathbf x}}
\newcommand{\y}{{\mathbf y}}
\newcommand{\um}{\underline{m}} 
\newcommand{\uS}{\underline{S}} 
\newcommand{\uT}{\underline{T}}
\newcommand{\rO}{\mathop{{}\mathrm{O}}\mathopen{}}
\renewcommand{\leq}{\leqslant}
\renewcommand{\geq}{\geqslant}
\newcommand\indep{\protect\mathpalette{\protect\@indep}{\perp}}
\def\@indep#1#2{\mathrel{\rlap{$#1#2$}\mkern2mu{#1#2}}}
\makeatletter \@addtoreset{equation}{section} \makeatother
\author{Bing-Yi Jing}
\affil{Department of Statistic and Data Science, Southern University of Science and Technology, Shenzhen, China}
\author{Weiming Li}
\affil{School of Statistics and Management, Shanghai University of Finance and Economics, China}
\author{Jiahui Xie}
\affil{Department of Statistics and Data Science,
National University of Singapore, Singapore}
\author{Yangchun Zhang}
\affil{Department of Mathematics, Shanghai University, China}
\author{Wang Zhou}
\affil{Department of Statistics and Data Science,
National University of Singapore, Singapore}
\title{On Convergence Rates of Spiked Eigenvalue Estimates: A General Study of Global and Local Laws in Sample Covariance Matrices}
\begin{document}

\maketitle

\begin{abstract}
This paper investigates global and local laws for sample covariance matrices with general growth rates of dimensions. The sample size $N$ and population dimension $M$ can have the same order in logarithm, which implies that their ratio $M/N$ can approach zero, a constant, or infinity. These theories are utilized to determine the convergence rate of spiked eigenvalue estimates.
\end{abstract}

\textbf{Keywords: Local laws; Global laws; General growth rates; Rate of spiked eigenvalue estimates}

\tableofcontents

\addtocontents{toc}{\protect\setcounter{tocdepth}{2}}

\section{Introduction}

Covariance matrices serve as fundamental components in multivariate statistics and have versatile applications across fields, gaining heightened significance in high-dimensional data analysis. 
One can refer to \cite{Johnstone01,BS04,bai2008central,2008Operator,bai2009corrections,BSbook,Jiang2013,2014Tracy,bao2015universality,2018A,2019Local,LWYZ2020,zhang2022asymptotic} for an extensive account on statistical applications, \cite{2020High,giglio2022factor} for applications in machine learning and \cite{onatski2009testing,ahn2013eigenvalue,giglio2022factor} in economics, to name a few.

This paper investigates the spectral behaviors of sample covariance matrices with a general correlation structure. The dimension-to-sample size ratio
is allowed to tend towards zero, converge to a positive constant, or diverge to infinity. 
By analyzing these diverse cases, one can understand the spectral evolution of covariance matrices across different asymptotic scenarios, thereby providing comprehensive insights for applications.
To be specific, 
let $T$ be an $M\times M$ deterministic matrix and
\begin{align*}
X=(\mathbf{x}_1,\dots,\mathbf{x}_N)=(x_{ij})_{M,N}
\end{align*}
be a collection of $M\times N$ independent and normalized real or complex variables.
We will consider the following random matrix 
\begin{gather}\label{ass_model}
\mathcal{W}:=TXX^{*}T^{*}.
\end{gather}
This is the classical sample covariance matrix with $\Sigma:=TT^*$ describing population components correlation up to a scaling factor. 
Our main assumptions on this model are listed below.
\begin{itemize}
\item[] 
{\bf Assumption (A1).} The dimensions $M$ and $N$ tend to infinity in such a way that
\begin{gather}\label{ass_M/N}
N\to\infty,\quad M=M_N\to\infty,\quad N^b\lesssim M\lesssim N^a,
\end{gather}
for some positive constants $b\le a$. 

\item[] {\bf Assumption (A2).}
The entries of $X$ satisfy the following moment conditions 
\begin{gather}\label{ass_X}
\mathbb{E}x_{ij}=0,\quad \mathbb{E}|x_{ij}|^2=\frac{1}{\sqrt{MN}},\quad \mathbb{E}|(MN)^{1/4}x_{ij}|^q<C_q.
\end{gather}
for some positive constant $C_q$ and any integer $q\in\mathbb N$.

\item[] {\bf Assumption (A3).}
The empirical spectral distribution $\pi$ of the matrix $\Sigma$
satisfies 
\begin{equation}\label{ass_Sigma}
\pi\left([0,\tau]\right)\leqslant1-\tau\quad\text{ and }\quad \pi\left([0, \tau^{-1}]\right)=1 
\end{equation}
for some small enough constant $\tau>0$. In addition, we assume $\pi$ has finite bulk components.
\end{itemize}

Assumptions (A1)-(A2)-(A3) are commonly employed in Random Matrix Theory. Assumption (A1) illustrates our asymptotic regime, where the dimensions $M$ and $N$ can be logarithmically proportional, i.e., $\log M\sim \log N$. Consequently, their ratio
$$
\phi := \frac MN
$$ 
may have a limit $\phi_\infty$ taking values in the interval $[0,\infty]$. In Assumption (A2), we normalize the entries of $X$ by the factor $(MN)^{1/4}$ and assume the existence of moments of all orders. This standardization allows us to interpret the matrix $\mathcal W$ as a rescaled sample covariance matrix, streamlining our analysis when addressing various growth rates of the ratio $\phi$. Assumption (A3) states that the spectrum of $\Sigma$ is bounded and does not concentrate at zero. A stronger requirement
\begin{gather}\label{ass_T}
T=T^{*}=\Sigma^{1/2}>0
\end{gather}
will be employed to simplify the expressions in technical proofs. Note that \eqref{ass_T} can always be substituted with \eqref{ass_Sigma}.

This paper offers a threefold contribution: establishing global laws for $\mathcal{W}$, deriving local laws for $\mathcal{W}$, and applying these findings to the estimation of spiked eigenvalues. 
These findings are derived through analyzing the Stieltjes transform $m_{\mathcal W}(z)$ of the matrix $\mathcal W$, i.e.,
\begin{align}\label{Stieltjes calW}
    m_{\mathcal W}(z)=\frac1M \tr \mathcal G(z)\quad\text{ with }\quad \mathcal G(z):=\left(\mathcal W-z\right)^{-1}
\end{align}
and  
$$
z=E+\mathrm{i}\eta,\quad \eta>0
$$
denoting the spectral parameter, a complex number in the open upper-half plane $\mathbb{C}_{+}$. The imaginary part $\eta$ of $z$ is called the \emph {spectral resolution}.

\begin{itemize}
\item[]
{\bf Global laws.}
A global law describes the convergence of $m_{\mathcal W}(z)$ with the spectral parameter $z$ independent of $N$ and the spectral resolution $\eta$ of order one.
This type of global law provides tools for determining the limiting spectral distribution of $\mathcal W$ and has been well-established in the regime $\phi\to\phi_\infty\in (0,\infty)$, see \cite{marchenko1967distribution, yin1986limiting,silverstein1995empirical,silverstein1995strong,bhattacharjee2017matrix}.
There are instances where the centralized sample covariance matrix is under consideration,
say, 
$$\mathcal W-\E \mathcal W=TXX^*T^*-\phi^{-\frac12}\Sigma,$$ 
which pulls back the eigenvalues of $\mathcal W$ towards the origin. For references, one can refer to \cite{bai1988convergence,bao2012,chen2012convergence,chen2015clt} when $\phi\to\phi_\infty =0$ or $\infty$.

This paper will first establish global laws for both the sample covariance matrix $\mathcal W$ and its centralized version $\mathcal W-\E \mathcal W$ within the asymptotic framework defined by Assumption (A1). These laws depend on $(M, N)$ but not on a specific limit $\phi_\infty$ and can, therefore, accommodate a broad range of scenarios where $\phi$ approaches $\phi_\infty \in [0,\infty]$. In this way, our findings unify all the previously mentioned results regarding the limiting spectral distribution of $\mathcal W$.

\item[]
{\bf Local laws.}
A local law quantifies the deviation of $m_{\mathcal W}(z)$ from its $(M,N)$-dependent non-random approximate, denoted as $m_0(z)$, for all $z$ with an imaginary part $\eta \gg N^{-1}$. This implies that the local law applies to the spectral parameter $z$, dependent on $N$, allowing the spectral resolution $\eta$ to be significantly smaller than the global scale of 1. The local law is a foundation for establishing a universality theorem, which is similar to the central limit theorem. Universality allows us to determine the asymptotic distribution of eigenvalues of random matrices without imposing strict assumptions on the distribution of entries. For instance, Wigner has observed a physical phenomenon that the eigenvalue gap distribution in a large and complex system can illustrate. This distribution is independent of other intricate structures and solely depends on the symmetry class of the physical system. Essentially, the distribution is universal.
Most studies on the local law for $\mathcal W$ in the literature have primarily focused on scenarios where $\phi$ approaches a positive constant $\phi_\infty\in (0,\infty)$. See \cite{PillaiandYin2014,bao2015universality,knowles2017anisotropic,DingandYang2018,yang2019edge,Wen2021}. As far as we know, the only existing work addressing the general case where $\phi$ can approach zero or infinity is \cite{Bloemendal2014}, established for the case of $\Sigma=I$.

Therefore, our second objective is to establish local laws for the sample covariance matrix $\mathcal W$ with general covariance matrix $\Sigma$ and dimension-to-sample size ratio $\phi\to\phi_\infty \in [0, \infty]$.

\item[]
{\bf Convergence rate of spiked eigenvalue estimates.}

The spiked covariance model for $\Sigma$, originally introduced by \citep{Johnstone01}, illustrates that a small number of eigenvalues of $\Sigma$ are clearly separated from the bulk
and often carry significant information about the population.
Estimating these spikes is an essential statistical inference task, often initiated by the Stieltjes transform $m_{\mathcal W}(z)$. This has been discussed by \cite{mestre2008improved} and \cite{baiandding2012}.

As detailed in the paper, we can derive a more precise bound on the difference between $m_{\mathcal W}(z)$ and its limit by establishing local laws outside the spectrum.  This enables us to determine a convergence rate for the estimation.
\end{itemize}

The remainder of this paper is organized as follows. Section \ref{sec:global} develops global laws for the sample covariance matrix $\mathcal W$. 
Section \ref{sec: local} establishes local laws for $\mathcal W$.
Technical proofs of the results in Section \ref{sec: local} are presented in Sections \ref{main proof} and \ref{Th2&3}. Section \ref{sec: Application} illustrates our application to the estimation of spiked eigenvalues.

\section{Global laws}\label{sec:global}

\subsection{Global laws for sample covariance matrices}
Our first result is on global laws for the sample covariance matrix $\mathcal W$. 
\begin{theorem}\label{mp1}
Suppose that Assumptions (A1)-(A2)-(A3) hold. Then, there exists a deterministic function $m_0(z)$ such that
$$
m_{\mathcal W}(z)-m_0(z)\xrightarrow{a.s.}0,\quad \forall z\in\mathbb C_+.
$$ 
In particular, the function $m=m_0(z)$ is the unique solution to 
\begin{align}\label{eqm2}
    m=\int\frac{1}{x(\phi^{-1/2}-\phi^{1/2}-\phi^{1/2} zm)-z}\pi(\mathrm{d}x)
\end{align}
on the set $\{z: z\in \mathbb C_+, -(1-\phi)/z+\phi m(z)\in \mathbb C_+\}$.
\end{theorem}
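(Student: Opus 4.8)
The plan is to combine a self-averaging step for $m_{\mathcal W}(z)$ with a fixed-point analysis of the Marchenko--Pastur--Silverstein equation, carried out with the aspect ratio $\phi=\phi_N$ kept explicit rather than replaced by a limit. Fix $z=E+\mathrm i\eta\in\mathbb C_+$ once and for all; since only $o(1)$ accuracy is needed, the full strength of Assumption (A2) is more than enough. Write $\mathcal G=\mathcal G(z)$, $m=m_{\mathcal W}(z)$, set $g:=M^{-1}\tr(\Sigma\mathcal G)$, and introduce the companion transform $\underline m:=-(1-\phi)/z+\phi m$, which equals $N^{-1}\tr(X^{*}\Sigma X-z)^{-1}$ and therefore always lies in $\mathbb C_+$. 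A useful guiding observation is the exact identity $\mathcal W=\phi^{-1/2}\,T\widetilde X\widetilde X^{*}T^{*}$ with $\widetilde X:=\phi^{1/4}X$ having entries of variance $1/N$: it exhibits $\mathcal W$ as a classical sample covariance matrix read at the spectral point $\phi^{1/2}z$, and in the variable $\underline m$ equation \eqref{eqm2} is the Silverstein fixed-point equation at aspect ratio $\phi$.

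Step 1 (self-averaging). First I would show $m_{\mathcal W}(z)-\mathbb E m_{\mathcal W}(z)\xrightarrow{a.s.}0$ via the martingale decomposition along the columns $\mathbf x_1,\dots,\mathbf x_N$ of $X$: with $\mathbb E_k$ the conditional expectation given $\mathbf x_1,\dots,\mathbf x_k$ and $\gamma_k:=(\mathbb E_k-\mathbb E_{k-1})m_{\mathcal W}$, eigenvalue interlacing for the rank-one update $\mathcal W-T\mathbf x_k\mathbf x_k^{*}T^{*}$ gives $|\gamma_k|\le 2/(M\eta)$, and Burkholder's inequality then yields $\mathbb E|m_{\mathcal W}-\mathbb E m_{\mathcal W}|^{2p}\lesssim_p (N/(M^{2}\eta^{2}))^{p}$. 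When $\phi$ is bounded below this is $\lesssim_p (N\phi^{2}\eta^{2})^{-p}$, summable over $N$ for $p$ large, so Borel--Cantelli applies. When $\phi$ is small --- as (A1) permits, e.g. $M\lesssim N^{1/2}$ --- the crude bound is too weak, and here I would use that $\mathbb E\|\mathbf x_k\|^{2}=\phi^{1/2}$ is small: on the overwhelming-probability event $\{\max_k\|\mathbf x_k\|^{2}\le 2\phi^{1/2}\}$ the perturbation $T\mathbf x_k\mathbf x_k^{*}T^{*}$ has norm $\lesssim\phi^{1/2}$, the Sherman--Morrison numerator $\mathbf x_k^{*}T^{*}(\mathcal G^{(k)})^{2}T\mathbf x_k$ is $\lesssim\phi^{1/2}/\eta^{2}$ and the denominator $1+\mathbf x_k^{*}T^{*}\mathcal G^{(k)}T\mathbf x_k$ stays away from zero, so $|\gamma_k|\lesssim\phi^{1/2}/(M\eta^{2})$ and hence $\mathbb E|m_{\mathcal W}-\mathbb E m_{\mathcal W}|^{2p}\lesssim_p (M\eta^{4})^{-p}$, summable because $M\gtrsim N^{b}$. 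The same quadratic-form control --- concentration of $\beta_k:=\mathbf x_k^{*}T^{*}\mathcal G^{(k)}T\mathbf x_k$ about its conditional mean $(MN)^{-1/2}\tr(\Sigma\mathcal G^{(k)})$, using (A2) --- feeds the next step.

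Step 2 (the mean and the fixed point). From $z\mathcal G=\sum_k T\mathbf x_k\mathbf x_k^{*}T^{*}\mathcal G-I$ and the Sherman--Morrison identity $\mathbf x_k^{*}T^{*}\mathcal G T\mathbf x_k=\beta_k/(1+\beta_k)$, replacing $\beta_k$ by $\phi^{1/2}g$ and $\mathcal G^{(k)}$ by $\mathcal G$ --- the errors being $o(1)$ by Step 1 and the rank-one bounds, with every power of $\phi$ tracked so the discarded terms stay genuinely lower order in all regimes --- one reaches the deterministic equivalent $\mathbb E\mathcal G(z)=(\alpha\Sigma-z)^{-1}+o(1)$ in the sense of $M^{-1}\tr(\cdot)$ and $M^{-1}\tr(\Sigma\,\cdot)$, where $\alpha:=\phi^{-1/2}/(1+\phi^{1/2}g)$. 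Testing against $I$ and against $\Sigma$ gives $m_{\mathcal W}=\int(\alpha x-z)^{-1}\pi(\mathrm{d}x)+o(1)$ and $\alpha g=1+z m_{\mathcal W}+o(1)$; eliminating $g$ between the latter and the definition of $\alpha$ yields $\alpha=\phi^{-1/2}-\phi^{1/2}-\phi^{1/2}z\,m_{\mathcal W}+o(1)$, so $\mathbb E m_{\mathcal W}(z)$ solves \eqref{eqm2} up to an $o(1)$ error. To finish, rewrite \eqref{eqm2} in the companion variable as $z=-\underline m^{-1}+\phi^{1/2}\int x\,(1+\phi^{-1/2}x\underline m)^{-1}\pi(\mathrm{d}x)$; a Silverstein-type fixed-point argument --- whose monotonicity and contraction estimates are uniform in $\phi\in[0,\infty]$ because the integrand is controlled only through $x\in[\tau,\tau^{-1}]$ from Assumption (A3) --- shows that for each $z$ there is a unique solution $m_0=m_0^{(N)}(z)$ with $\underline m\in\mathbb C_+$, and that any $\widehat m$ satisfying \eqref{eqm2} up to $o(1)$ with companion $o(1)$-close to $\mathbb C_+$ obeys $\widehat m-m_0^{(N)}(z)=o(1)$, uniformly in $N$. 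Applying this with $\widehat m=\mathbb E m_{\mathcal W}(z)$ gives $\mathbb E m_{\mathcal W}(z)-m_0(z)\to0$, which with Step 1 yields $m_{\mathcal W}(z)-m_0(z)\xrightarrow{a.s.}0$.

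The hard part is the uniformity of every error estimate, and of the stability of \eqref{eqm2}, across the entire range $\phi\in[0,\infty]$ without the crutch of a convergent subsequence of $\phi_N$. In the degenerate regime $\phi\to0$ all the relevant quantities ($m_{\mathcal W}$, $g$, $\beta_k$, the rank-one perturbations) vanish at the rate $\phi^{1/2}$, so one must check that the terms dropped in Steps 1--2 are smaller still; in the regime $\phi\to\infty$ the matrix $\mathcal W$ carries a kernel of dimension $M-N$ and the column norms $\|\mathbf x_k\|^{2}\approx\phi^{1/2}$ blow up, so the small-$\phi$ refinement is unavailable and one must fall back on the crude interlacing bound, which is strong enough exactly because $\phi$ is large there. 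Engineering the case split --- large versus small $\phi$, with a threshold depending on $z$ through $\eta$ and $\tau$ --- so that both extremes, and everything between, are covered by summable moment bounds is the main technical obstacle.
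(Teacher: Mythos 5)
Your proposal is correct and follows essentially the same route as the paper's proof (of Lemma \ref{th-all} with $\theta=0$): a martingale--Burkholder step for $m_{\mathcal W}-\E m_{\mathcal W}$, sharpened in the small-$\phi$ regime by the $\phi^{1/2}$-smallness of the rank-one quadratic forms, followed by a leave-one-out/deterministic-equivalent computation showing $\E m_{\mathcal W}$ satisfies \eqref{eqm2} up to $\ro(1)$ and a stability/uniqueness argument for the self-consistent equation. Your $\alpha=\phi^{-1/2}/(1+\phi^{1/2}g)$ is exactly the paper's $\phi^{-1/2}b_N$ and your elimination of $g$ reproduces the paper's passage between \eqref{eqm2} and the system \eqref{eqmtt}, so no substantive difference remains beyond the paper additionally carrying the centering parameter $\theta$ to cover Theorem \ref{mp2} simultaneously.
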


\begin{remark}\label{mp1remark}
Theorem \ref{mp1} establishes the strong consistency of the Stieltjes transform 
$m_{\mathcal W}(z)$.
The function $m_0(z)$ is an approximate of the expectation $\E m_{\mathcal W}(z)$ and is uniquely determined by the equation \eqref{eqm2}. An alternative representation of this equation is
\begin{equation}\label{eqm1}
\frac{1}{m_{1}}=-z+\phi^{1/2}\int\frac{x}{1+\phi^{-1/2}m_{1}x}\pi(\mathrm{d}x),
\end{equation}
where $m_1=-(1-\phi)/z+\phi m_0$ approximates the companion Stieltjes transform of $\mathcal W$, see \cite{silverstein1995empirical}.
\end{remark}
\begin{remark}\label{global-re1}

Theorem \ref{mp1} presents a Marc\v{e}nko-Pastur law \citep{marchenko1967distribution} for the eigenvalues of $\mathcal W$, extending the original results to encompass a broader range of the ratio $\phi$. By specializing the limit of this ratio, we can recover several well-established limiting spectral distributions from equations \eqref{eqm2} and \eqref{eqm1}.

\begin{itemize}
\item [I.] The standard case where $\phi\to \phi_\infty\in(0,\infty)$. Equation \eqref{eqm2} converges to that from \citep{marchenko1967distribution} up to a scaling factor $\phi^\frac12$.
\item [II.] The degenerate case where $\phi\to \phi_\infty=0$ with $\Sigma=I_M$.  \cite{bai1988convergence,chen2012convergence} showed that the empirical spectral distribution (ESD) of the centralized sample covariance matrix, i.e.,
$$
\mathcal W-\phi^{-\frac12}I_M,
$$
converges to the standard semicircle law. This can be recovered from \eqref{eqm2} by using the replacement $z\to z+\phi^{-\frac12}$ and then taking the limit as $\phi\to0$, which yields $m_0(z)\to m=m(z)$ satisfying
\begin{align}\label{semicircle-law}
m+\frac{1}{m}+z=0,\quad \forall z\in\mathbb C_+.
\end{align}

\item [III.] The degenerate case where $\phi\to\phi_\infty =\infty$. In this case, the matrix $\mathcal W$ only has $N$ nonzero eigenvalues. It is thus convenient to analyze its companion matrix $W:=X^*\Sigma X$ with the following normalization:
$$
\frac{ W-\phi^{\frac12}a_pI_N}{\sqrt{b_p}}\quad \text{ where } \quad a_p=\frac1M \tr(\Sigma),\quad  b_p=\frac1M \tr(\Sigma^2).
$$
\cite{qiu2023asymptotic} showed that the ESD of this renormalized matrix converges to the standard semicircle law.
To recover this result, we apply the transforms
$$
m_1 \to \frac{m}{\sqrt{b_p}} \quad\text{and}\quad   z\to \sqrt{b_p}\cdot z+\phi^{\frac12}a_p
$$
to \eqref{eqm1}, which gives
\begin{align*}
\frac{\sqrt{b_p}}{m}
&=-\sqrt{b_p}z-\phi^{\frac12}a_p+\phi^{1/2}\int\frac{x\sqrt{b_p}}{\sqrt{b_p}+\phi^{-1/2}mx}\pi(\mathrm{d}x)\\	
&=-\sqrt{b_p}z-\int \frac{x^2 m}{\sqrt{b_p}+\phi^{-1/2}mx}\pi(\mathrm{d}x).
\end{align*}
Then, by taking the limit as $\phi\to\infty$, we get the equation \eqref{semicircle-law} that defines the semicircle law.
\end{itemize}
\end{remark}

To understand the eigenvalue behaviors of the centralized sample covariance matrix $\mathcal W-\E \mathcal W$, we cannot rely on Theorem \ref{mp1} if the covariance matrix $\Sigma$ does not have a spherical shape (i.e., $\Sigma\neq a_pI_M$). Consequently, we introduce a new result to address this problem.

\begin{theorem}\label{mp2}
Suppose that Assumptions (A1)-(A2)-(A3) hold. Let $\tilde m_{\mathcal W}(z)$ be the Stieltjes transform of $\mathcal W-\E \mathcal W$.
Then, there exists a deterministic function $\tilde m_0(z)$ such that
$$
\tilde m_{\mathcal W}(z)-\tilde m_0(z)\xrightarrow{a.s.}0,\quad \forall z\in\mathbb C_+.
$$ 
In particular, the function $m=\tilde m_0(z)$ is the unique solution to 
\begin{align}\label{eqmt}
    \begin{cases}
    &\displaystyle 1+z m=-\frac{g^2}{1+\phi^\frac12 g},\\
    &\displaystyle g=-\int \frac{x}{x g/[1+\phi^\frac12 g]+z} \pi (\mathrm{d}x).
    \end{cases}
\end{align}
on the set $\{z: z\in \mathbb C_+, g\in\mathbb C_+\}$.
\end{theorem}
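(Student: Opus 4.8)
The plan is to mimic the standard resolvent/martingale approach that yields Theorem \ref{mp1}, but carried out for the centered matrix $\mathcal W - \E\mathcal W = TXX^*T^* - \phi^{-1/2}\Sigma$. Write $\widetilde{\mathcal W} := \mathcal W - \E\mathcal W$ and $\widetilde{\mathcal G}(z) := (\widetilde{\mathcal W} - z)^{-1}$. First I would record the obvious reduction: by Assumption (A3) and the remark following \eqref{ass_T}, we may assume $T = \Sigma^{1/2} > 0$, so that $\widetilde{\mathcal W} = \Sigma^{1/2} X X^* \Sigma^{1/2} - \phi^{-1/2}\Sigma$. Since almost-sure convergence for each fixed $z$ follows from convergence of $\E\,\tilde m_{\mathcal W}(z)$ together with a variance bound, the core of the argument is to identify the limit of $\E\,\tilde m_{\mathcal W}(z)$ and to show it satisfies the system \eqref{eqmt}; the concentration step (McDiarmid / Efron–Stein on the columns $\mathbf x_1,\dots,\mathbf x_N$, or a martingale decomposition over columns as in \cite{silverstein1995empirical}) is routine given Assumption (A2) and can be invoked essentially verbatim from the proof of Theorem \ref{mp1}.

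The substantive step is the fixed-point equation. I would introduce the auxiliary quantity $g = g_N(z)$ as (an approximation of) $\frac{1}{N}\tr\big(\Sigma^{1/2}X X^*\Sigma^{1/2}\,\widetilde{\mathcal G}(z)\big)$ — equivalently the relevant trace associated with the companion matrix $X^*\Sigma X$ — playing the role that the companion Stieltjes transform plays in Remark \ref{mp1remark}. Using the rank-one perturbation / Schur-complement identity for the $j$-th column, one gets for each diagonal resolvent entry an expression of the form $(\widetilde{\mathcal G}^{-1})_{jj}$-type term in which the self-energy is governed by $g$ and the shift $-\phi^{-1/2}\Sigma$ appears additively; taking traces and using the concentration of quadratic forms $\mathbf x_j^* A \mathbf x_j \approx (MN)^{-1/2}\tr A$ (Assumption (A2)) produces, after collecting the deterministic limit, the pair of equations: one tying $1 + zm$ to $-g^2/(1+\phi^{1/2}g)$ (the Schur-complement bookkeeping of the centering term against the resolvent), and one self-consistent equation $g = -\int x\,\big(xg/[1+\phi^{1/2}g] + z\big)^{-1}\pi(\mathrm dx)$ expressing $g$ through the spectral measure $\pi$ of $\Sigma$. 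One should double-check consistency by specializing to $\Sigma = a_p I$, where the system must collapse to the semicircle equation \eqref{semicircle-law} after the scaling in Remark \ref{global-re1}.III — this is a useful sanity check but also essentially dictates the precise form of the coupling.

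For uniqueness on $\{z \in \mathbb C_+ : g \in \mathbb C_+\}$, I would argue as is standard for MP-type equations: show that any solution with the stated analyticity/half-plane constraint is the Stieltjes transform of a probability measure (e.g.\ via a Nevanlinna/Herglotz representation, checking $\rIm m > 0$ and the growth $m(z) \sim -1/z$ as $z \to \infty$), then invoke a contraction/monotonicity argument — differentiate the second equation of \eqref{eqmt} with respect to $g$ to control $\partial g/\partial z$ and show the map defining $g$ has derivative of modulus $<1$ in the relevant region, forcing the fixed point to be unique; strict positivity of $\Sigma$ (Assumption (A3)) keeps all denominators bounded away from zero. The combination then pins down $\tilde m_0$ uniquely.

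I expect the main obstacle to be the first step of the coupled system — correctly tracking how the deterministic shift $\phi^{-1/2}\Sigma$ interacts with the resolvent to yield exactly $1 + zm = -g^2/(1+\phi^{1/2}g)$ rather than some superficially different but equivalent relation. In the uncentered case the analogous identity is the clean $m_1^{-1} = -z + \phi^{1/2}\int x/(1+\phi^{-1/2}m_1 x)\,\pi(\mathrm dx)$ of Remark \ref{mp1remark}; here the subtraction of $\E\mathcal W$ mixes the $O(1)$ part of the quadratic-form fluctuations back into the leading order, so one must be careful to retain the right $O(1)$ contribution (this is precisely why $g$, and not just $m$, is needed, and why the equation is a genuine $2\times 2$ system). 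Managing this bookkeeping uniformly over the whole regime $N^b \lesssim M \lesssim N^a$ — so that the error terms are $o(1)$ whether $\phi\to 0$, a constant, or $\infty$ — is where the real care lies; the rest follows the template of Theorem \ref{mp1}.
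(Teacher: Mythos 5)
Your overall architecture is the same as the paper's: the paper proves Theorems \ref{mp1} and \ref{mp2} simultaneously through Lemma \ref{th-all} (a centering parameter $\theta\in\{0,1\}$), with a column-martingale plus Burkh\"older argument for $m_B-\E m_B$, a deterministic comparison for the mean, and uniqueness of the limiting system to close the self-consistent equation — all of which you correctly anticipate. The genuine gap is that the one step carrying the actual content of the theorem, the derivation of the coupled system \eqref{eqmt}, is never performed: you describe it as Schur-complement ``bookkeeping'' that ``produces'' the two equations and you yourself flag it as the main obstacle. In the paper this is precisely where the work is: one introduces $b_N(z)=\bigl(1+\phi^{1/2}M^{-1}\E\tr\Sigma D^{-1}(z)\bigr)^{-1}$ and the comparison matrix $V(z)=zI_M-\phi^{-1/2}(b_N(z)-1)\Sigma$, writes $\E m_B-m_b$ as $S_N-T_N=\uS_N-\uT_N$, shows $S_N,\uS_N\to0$ by quadratic-form concentration, and then uses the stability/uniqueness of the limit system to force $b_N-b\to0$, which yields both equations of \eqref{eqmt} at once. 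Without an analogue of this comparison-and-stability step, your proposal asserts the form of \eqref{eqmt} rather than proving it, and it is exactly the step you admit you do not know how to control.

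A second, concrete problem is your choice of auxiliary quantity. The $g$ in \eqref{eqmt} approximates $g_n(z)=\frac1M\tr\bigl(\mathcal W-\E\mathcal W-z\bigr)^{-1}\Sigma$ (equivalently it is encoded by $b_N$ through $1+\phi^{1/2}g=b^{-1}$), not the trace you propose. Writing $\widetilde{\mathcal G}=(\mathcal W-\E\mathcal W-z)^{-1}$, one has $\frac1N\tr\bigl(\Sigma^{1/2}XX^*\Sigma^{1/2}\widetilde{\mathcal G}\bigr)=\phi\,(1+z\,\tilde m_{\mathcal W})+\phi^{1/2}\,\frac1M\tr\Sigma\widetilde{\mathcal G}$, which under \eqref{eqmt} converges to $\phi^{1/2}g/(1+\phi^{1/2}g)$ rather than to $g$; it is also not the companion Stieltjes transform of $X^*\Sigma X$ unless $\Sigma$ is a multiple of the identity. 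So even granting the bookkeeping, the self-consistent equations you would obtain are for a different function and would require a further, unstated change of variables to match \eqref{eqmt}. The concentration step, the reduction to $T=\Sigma^{1/2}$, and the uniqueness discussion (Herglotz representation plus contraction, versus the paper's appeal to the argument for \eqref{gsc} in the generalized semicircle case) are fine and essentially interchangeable with the paper's; the missing piece is the identification of the correct auxiliary quantity and the deterministic comparison that actually produces \eqref{eqmt} uniformly over $N^b\lesssim M\lesssim N^a$.
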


\begin{remark}
	Theorem \ref{mp2} describes the global eigenvalue distribution of the centralized sample covariance matrix. The auxiliary complex function $g=g(z)$ approximates the following
	 random quantity 
	$$
g_n(z):=\frac1M \tr\left(\mathcal W-\E \mathcal W-z\right)^{-1}\Sigma
	$$
satisfying $g_n(z)-g(z)\to 0$, almost surely. It's evident that if the ratio $\phi\to\phi_\infty= 0$, the system of equations in \eqref{eqmt} reduces to
\begin{align}\label{gsc}
1+zm+g^2=0,\quad  g=\int \frac{x}{-z-xg} \pi(\mathrm{d}x),
\end{align}
which defines a generalized semicircle law; see \cite{baizhang2010,bao2012}. If, in addition,
 $\Sigma= I_M$,
it reduces to \eqref{semicircle-law} that defines the semicircle law.  
\end{remark}

\subsection{Proof of Theorems \ref{mp1} and \ref{mp2}.}

This section is devoted to proving Theorem \ref{mp1} and Theorem \ref{mp2}. Let
\begin{align*}
B=\mathcal W-\theta \E\mathcal W,\quad D(z)=B-zI,\quad m_B(z)=\frac1M \tr D^{-1}(z),  
\end{align*}
where the parameter $\theta \in \{0,1\}$. As demonstrated below, we will establish a general lemma to prove the two theorems.
\begin{lemma}\label{th-all}
Suppose that Assumptions (A1)-(A2)-(A3) hold. 
Then, there exists a deterministic function $m_b(z)$ such that
$$
m_{B}(z)- m_b(z)\xrightarrow{a.s.}0,\quad \forall z\in\mathbb C_+.
$$ 
In particular, the function $m=m_b(z)$ is the unique solution to 
\begin{align}\label{eqmtt}
    \begin{cases}
    &\displaystyle 1+z m=g\phi^{-\frac12}\left[\frac{1}{1+\phi^{1/2} g}-\theta \right],\\
    &\displaystyle g=\int\frac{x}{x\phi^{-1/2}\big[(1+\phi^{1/2}g)^{-1}-\theta \big]-z}\pi(\mathrm{d}x).
    \end{cases}
\end{align}
on the set $\{z: z\in \mathbb C_+, g\in\mathbb C_+\}$.
\end{lemma}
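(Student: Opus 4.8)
The plan is to prove Theorem \ref{mp1} (the case $\theta=0$) and Theorem \ref{mp2} (the case $\theta=1$) simultaneously through Lemma \ref{th-all}, working under the simplification $T=T^{*}=\Sigma^{1/2}>0$ that \eqref{ass_T} permits. Fix $z\in\mathbb C_{+}$ with $\eta=\rIm z$ of order one, write $\br_j=T\bx_j$ for the columns of $TX$ (so that $\E\br_j\br_j^{*}=(MN)^{-1/2}\Sigma$, $\sum_j\br_j\br_j^{*}=\mathcal W$, and $\theta\E\mathcal W=\theta\phi^{-1/2}\Sigma$), and abbreviate $g_n(z)=\tfrac1M\tr\bigl(D^{-1}(z)\Sigma\bigr)$. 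A preliminary truncation and standardization of the entries $x_{ij}$ at scale $(MN)^{-1/4}N^{\epsilon}$ (for a small $\epsilon>0$), legitimate because Assumption (A2) supplies moments of every order, reduces the statement — via the usual rank and operator-norm perturbation inequalities — to the case of entries bounded by $(MN)^{-1/4}N^{\epsilon}$, with a replacement error that is $\ro(1)$ uniformly in $(M,N)$ subject to Assumption (A1).

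I would first establish $m_B(z)-\E m_B(z)\xrightarrow{a.s.}0$ via the martingale decomposition $m_B-\E m_B=\sum_{k=1}^{N}(\E_k-\E_{k-1})\bigl(m_B-m_B^{(k)}\bigr)$ along the column filtration — where $m_B^{(k)}=\tfrac1M\tr(D^{(k)})^{-1}$ with $D^{(k)}=D-\br_k\br_k^{*}$ independent of $\bx_k$ — followed by Burkholder's inequality. Here the classical bound $|m_B-m_B^{(k)}|\leq(M\eta)^{-1}$ is too weak, as it only yields a decay of order $(N/M^{2})^{1/2}$, which diverges when $M\ll\sqrt N$. Instead I use $|m_B-m_B^{(k)}|\leq(M\eta^{2})^{-1}|\br_k|^{2}$ together with $|\br_k|^{2}=\bx_k^{*}\Sigma\bx_k=\rO(\phi^{1/2})$, which holds with overwhelming probability after truncation since $\E|\br_k|^{2}=(MN)^{-1/2}\tr\Sigma$ and $\|\Sigma\|=\rO(1)$; this makes the martingale variance of order $N\phi\,(M^{2}\eta^{4})^{-1}=\rO\bigl((M\eta^{4})^{-1}\bigr)\tozero$ for every admissible pair $(M,N)$.

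The core of the argument is the self-consistent system \eqref{eqmtt}. From $D=D^{(k)}+\br_k\br_k^{*}$ and Sherman--Morrison, $D^{-1}\br_k=\bigl(1+\br_k^{*}(D^{(k)})^{-1}\br_k\bigr)^{-1}(D^{(k)})^{-1}\br_k$, and a quadratic-form concentration estimate (available under Assumption (A2)) gives $\br_k^{*}(D^{(k)})^{-1}\br_k=(MN)^{-1/2}\tr\bigl(\Sigma(D^{(k)})^{-1}\bigr)+\ro(1)=\phi^{1/2}g_n+\ro(1)$, the error being uniform in the regime because $\|\Sigma\|$ and $\|(D^{(k)})^{-1}\|$ are $\rO(1)$; similarly $(D^{(k)})^{-1}\br_k\br_k^{*}$ has conditional mean $(MN)^{-1/2}(D^{(k)})^{-1}\Sigma$ with fluctuations negligible after summation, and $D^{(k)}$ may be replaced by $D$ up to rank-one errors. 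Substituting these into the resolvent identity $D^{-1}\bigl(\sum_k\br_k\br_k^{*}\bigr)=I+zD^{-1}+\theta\phi^{-1/2}D^{-1}\Sigma$ and summing over $k$ (using $N/\sqrt{MN}=\phi^{-1/2}$) leads to
\begin{align*}
\frac{\phi^{-1/2}}{1+\phi^{1/2}g_n}\,D^{-1}\Sigma=I+zD^{-1}+\theta\phi^{-1/2}D^{-1}\Sigma+\ro(1),
\end{align*}
so that, after rearranging, $\tfrac1M\tr D^{-1}$ and $\tfrac1M\tr(\Sigma D^{-1})$ coincide up to $\ro(1)$ with the corresponding normalized traces of $\Pi:=\bigl(\phi^{-1/2}\Sigma[(1+\phi^{1/2}g_n)^{-1}-\theta]-z\bigr)^{-1}$. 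Since $\Sigma$ and $\Pi$ are simultaneously diagonalizable, reading off $\tfrac1M\tr(\Sigma\Pi)$ and $\tfrac1M\tr(I+z\Pi)$ yields precisely the two equations of \eqref{eqmtt} for the pair $(m_B,g_n)$, up to $\ro(1)$.

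It then remains to prove existence, uniqueness, and Lipschitz stability of the solution of \eqref{eqmtt} on $\{z\in\mathbb C_{+},\ g\in\mathbb C_{+}\}$: for each such $z$ the second equation has a unique fixed point $g(z)\in\mathbb C_{+}$, because the map $g\mapsto\int x\,\bigl[x\phi^{-1/2}((1+\phi^{1/2}g)^{-1}-\theta)-z\bigr]^{-1}\pi(\mathrm dx)$ sends $\mathbb C_{+}$ into itself (as $w\mapsto w^{-1}$ carries $\mathbb C_{+}$ to the lower half-plane) and is a strict contraction there, after which $m$ is determined by the first equation; combined with the concentration from the second step, a stability estimate for this map upgrades the approximate identities above to $m_B(z)\xrightarrow{a.s.}m_b(z):=m$. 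I expect the main obstacle to be exactly the uniformity of every probabilistic estimate across all growth rates permitted by Assumption (A1) — where $M/N$ may tend to $0$, remain bounded, or diverge — in particular re-deriving the martingale and quadratic-form bounds, which in the classical $M\asymp N$ regime follow from standard estimates but here must be obtained from the $\phi^{1/2}$-smallness of an individual column — while simultaneously carrying the extra term $\theta\phi^{-1/2}D^{-1}\Sigma$ through the whole computation so that the centered case $\theta=1$ is handled by the same scheme.
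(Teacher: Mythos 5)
Your proposal follows essentially the same route as the paper's proof: a martingale decomposition with Burkh\"older's inequality whose gain over the naive bound comes from the $\mathrm{O}(\phi^{1/2})$ size of single-column quadratic forms, followed by a Sherman--Morrison/leave-one-out derivation of the self-consistent system \eqref{eqmtt}, closed by uniqueness and stability of that system. The only organizational difference is that the paper compares $\E m_B$ with $\frac1M\tr V^{-1}$ built from the expectation-based quantity $b_N$ and deduces $b_N\to b$ from uniqueness, whereas you carry the random $g_n$ through an approximate matrix identity; this is a cosmetic variation, not a different method.
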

Theorem \ref{mp2} is a direct consequence of this lemma by taking $\theta =1$. To obtain Theorem \ref{mp1}, we set
$$
\theta =0\quad \text{and} \quad 1+\phi^{\frac12}g=b^{-1}, 
$$ 
which gives
\begin{align*}
    m_b=\int\frac{1}{x\phi^{-1/2}b-z}\pi(\mathrm{d}x)=\int\frac{1}{x\phi^{-1/2}(1-\phi-\phi zm_{b})-z}\pi(\mathrm{d}x).
\end{align*}

\noindent
{\bf Proof of Lemma \ref{th-all}.} 
We shall prove this lemma under finite $(4+\delta)$th moments of $\{(MN)^\frac14x_{ij}\}$ for some $\delta>0$.  This proof involves five steps:
\begin{enumerate}
\item For any fixed $z \in \mathbb{C}_{+}, m_B(z)- \E  m_B(z) \rightarrow 0$, a.s.;
\item For any fixed $z \in \mathbb{C}_{+}, \E  m_B(z)-m_b(z) \rightarrow 0$;
\item Except for a null set, $m_B(z)-m_b(z) \rightarrow 0$ for every $z \in \mathbb{C}_{+}$;
\item Uniqueness of the solution to \eqref{eqmtt}.
\end{enumerate}
We will concentrate only on the first two steps, as the third step only involves standard arguments in Random Matrix Theory \citep{BSbook}, and the final step follows a similar procedure for obtaining uniqueness for \eqref{gsc} as described in \cite{baizhang2010}.
Below, we list some notation that will be used throughout the proof. For $j=1,\ldots, N$,
\begin{align*} 
	&\y_j=T\bx_j,\quad D_{j}(z)=D (z)-\y_j\y^*_j,
	\quad \beta_{j}(z)=\frac{1}{1+\y_j^* D_{j}^{-1}(z)\y_j},\nonumber\\ 
	 &b_{j}(z)=\frac{1}{1+\phi^{\frac12}M^{-1}\E \tr\Sigma D_{j}^{-1}(z)},
	\quad 
	\gamma_j(z)=\y_j^* D_j^{-1}(z) \y_j-\phi^\frac12\frac1M \E \tr \Sigma D_j^{-1}(z),\nonumber\\
	&\um_B(z)=-\frac{1-\phi}{z}+\phi m_B(z),\quad
	\um_b(z) =-\frac{1-\phi}{z}+\phi m_b(z),\\
	&b_{N}(z)=\frac{1}{1+\phi^{\frac12}M^{-1}\E \tr\Sigma D^{-1}(z)},\quad V(z)=zI_M- \phi^{-\frac12}(b_{N}(z)-\theta )\Sigma. 
\end{align*} 
We denote by $C$ some constant appearing in inequalities, which may take on different values from one expression to the next.

\noindent
{\bf Step 1. Almost sure convergence of the random part.}
 Let $\E_0(\cdot)$ denote expectation and $\E_j(\cdot)$ denote conditional expectation with respect to the $\sigma$-field generated by $\{\bx_1,\ldots,\bx_j\}$ for $j=1,\ldots,N$. 
Then, by the matrix formula
$$
\left({A}+\boldsymbol{\alpha} \boldsymbol{\beta}^*\right)^{-1}={A}^{-1}-\frac{{A}^{-1} \boldsymbol{\alpha} \boldsymbol{\beta}^*{A}^{-1}}{1+\boldsymbol{\beta}^*{A}^{-1} \boldsymbol{\alpha}},
$$
we can obtain a martingale decomposition of $m_B(z)$ as
\begin{align*} 
m_B(z)-\E  m_B(z) 
&=\frac1M\sum_{j=1}^N(\E_j-\E_{j-1})\tr\left[D ^{-1}(z)-D_{j}^{-1}(z)\right]\nonumber\\
&=\frac1M\sum_{j=1}^N(\E_j-\E_{j-1})\frac{-\y_j^*D_{j}^{-2}(z)\y_j}{1+\y_j^*D_{j}^{-1}(z)\y_j}:= \frac1M\sum_{j=1}^Nd_j(z).
\end{align*} 
For any $z=E+{\rm i}\eta$ with $\eta>0$, we have
\begin{align*}
\left|\frac{\y_j^*D_{j}^{-2}(z)\y_j}{1+\y_j^*D_{j}^{-1}(z)\y_j}\right| \leq 
\frac{\y_j^*\left(D_j^2(E)+\eta^2 I_M\right)^{-1} \y_j}{\rIm\left(1+\y_j^*D_j^{-1}(z)  \y_j\right)}=\frac{1}{\eta}.
\end{align*}
Therefore, $\{d_j\}$ forms a sequence of bounded martingale differences.
In addition, from Lemma \ref{lem: basictool_quadra_concentra},
\begin{align*}
\E\left|\y_j^*D_{j}^{-\ell}(z)\y_j\right|^{2k}
\leq& C|\phi^{\frac12}M^{-1}\tr D_{j}^{-\ell}(z)\Sigma|^{2k}+C\E\left|\y_j^*D_{j}^{-\ell}(z)\y_j-\phi^{\frac12}M^{-1}\tr D_{j}^{-\ell}(z)\Sigma\right|^{2k}\\
 \leq& C\phi^{k}
\end{align*}
for $\ell=1,2$ and any $k\in\mathbb N$. This implies that if $\phi$ tends to zero, the martingale differences $\{d_j\}$ can be bounded by $|\phi|^{\frac12}$ with high probability.
Therefore, by Burkh\"older's inequality, for any $k\geq 1$,  
$$
\begin{aligned}
\E \left|m_B(z)-\E  m_B(z)\right|^{2k} \leq 
\frac{C}{M^{2k}} \E \left(\sum_{j=1}^N\left|d_j(z)\right|^2\right)^{k} \leq \frac{CN^k}{M^{2k}}\max_j \E \left|d_j(z)\right|^{2k}=\mathrm{O}(M^{-k}).
\end{aligned}
$$
This, together with the Borel-Cantelli lemma, implies the almost sure convergence.

{\bf Step 2. Mean convergence.}
For simplicity of notation, we suppress the expression $z$ when it serves as an independent variable of some functions.
Recall the quantities $V$, $b_N$, $\um_B$, and $\um_b$ defined at the beginning of our proof.
We write  
\begin{align*} 
\E m_B-m_b&=\left[\E m_B +\frac1M\tr V^{-1}\right]-\left[\frac1M\tr V^{-1}+m_b\right]
:= S_{N}-T_N\\ 
&=\phi^{-1}\left[\E \um_B +\frac{b_N+\theta  (b_N^{-1}-1)}{z}\right]-\phi^{-1}\left[\frac{b_N+\theta  (b_N^{-1}-1)}{z}+\um_b \right]
:= \uS_N-\uT_N. 
\end{align*} 
We first show that $S_N$ and $\uS_N$ converge to zero. Using the identities 
$$
\y_j^*D^{-1}=\y_j^*D_j^{-1}\beta_j\quad\text{and}\quad \beta_j=b_j-b_j\beta_j\gamma_j,
$$ 
we have 
\begin{align*} 
S_N&=\frac1M\E\tr(D^{-1}+V^{-1})
=\frac1M\E\tr\left[V^{-1}\left(\sum_{j=1}^N\y_j\y_j^*- \phi^{-\frac12}b_{N}\Sigma\right)D^{-1}\right]\nonumber\\ 
&=\frac1M\sum_{j=1}^N\E \beta_j\y_j^*D_j^{-1}V^{-1}\y_j-\phi^{-\frac12}b_{N}\frac1M\E\tr\Sigma D^{-1}V^{-1}\\ 
&=\frac1M\sum_{j=1}^Nb_j \phi^\frac12\E \frac1M\tr D_j^{-1}V^{-1}\Sigma
-\frac1M\sum_{j=1}^N\E b_j\beta_j\gamma_j\y_j^*D_j^{-1}V^{-1}\y_j\\ 
&\quad -\phi^{-\frac12}b_{N}\frac1M\E\tr\Sigma D^{-1}V^{-1}\\
&=\frac{\phi^{-\frac12}}N\sum_{j=1}^N\E\left( \frac{b_j}M\tr D_j^{-1}V^{-1}\Sigma-\frac{b_{N}}M \tr\Sigma D^{-1}V^{-1}\right)
-\frac1M\sum_{j=1}^N\E b_j\beta_j\gamma_j\y_j^*D_j^{-1}V^{-1}\y_j.
\end{align*} 
Since $\max\left\{|\beta_j|, |b_j|, |b_N|\right\}\leq |z|/\eta$ and, for any non-random matrix $A$, 
$$
\frac1M\left|\tr D^{-1}A\right| \leq \frac{\|A\|}{v},\quad \left|\E\tr D_j^{-1}A
-\E\tr D^{-1} A\right| \leq \frac{\phi^\frac12\|A\|}{v},
$$
we get 
\begin{align*}
\left|S_N\right|
&\leq \frac{C}{M}\sum_{j=1}^N\E \left|\gamma_j\y_j^*D_j^{-1}V^{-1}\y_j\right|+\mathrm{o}(1)\\
&\leq \frac{C}{M}\sum_{j=1}^N\E^\frac12 \left|\gamma_j\right|^2\E^\frac12\left|\y_j^*D_j^{-1}V^{-1}\y_j\right|^2+\mathrm{o}(1).
\end{align*}
From Lemma \ref{lem: basictool_quadra_concentra}, we have  
$$
\E|\gamma_j|^2=\E\left|\y_j^* D_j^{-1}(z) \y_j- \phi^\frac12\frac1M \E \tr \Sigma D_j^{-1}(z)\right|^2\leq \frac{C}{N}
$$ and 
$\E|\y_j^*D_j^{-1}V^{-1}\y_j|^2\leq C\phi$.
We thus obtain $S_N\to 0$.
For the term $\uS_n$, using the identities
$$ 
M+z\tr D^{-1}=\tr B D^{-1}
=N-\sum_{j=1}^N\beta_j-\phi^{-\frac12}\theta  \tr \Sigma D^{-1}, 
$$ 
we get
  $
  z\um_B=-N^{-1}\sum_{j=1}^N\beta_j-\phi^{-\frac12}\theta  N^{-1}\tr \Sigma D^{-1}
  $
 and thus 
\begin{align*} 
	\left|\uS_N\right|
	=&\left|
	\frac{N^{-1}\sum_{j=1}^N\E(\beta_j-b_N)}{z\phi}\right|
	\leq C\phi^{-\frac12}
	\frac1N\sum_{j=1}^N\frac1M\E\left|\tr \Sigma D_j^{-1}-\E\tr \Sigma D^{-1}\right|+\mathrm{o}(1)\to0. 
\end{align*}

We next show that $\max\{|T_N|, |\uT_N|\}\to 0$.
Let $1+\phi^{\frac12}g=b^{-1}$. The system \eqref{eqmtt} is equivalent to 
\begin{align*}
    \begin{cases}
    &\displaystyle 1+z m_b=\phi^{-1}[1-b-\theta  (b^{-1}-1)],\\
    &\displaystyle m_b=\int\frac{1}{x\phi^{-1/2}\big[b-\theta \big]-z}\pi(\mathrm{d}x).
    \end{cases}
\end{align*}
Then, we get 
\begin{align}
T_N
&=\int \frac{\pi(\mathrm{d}x)}{z- \phi^{-1/2}(b_{N}-\theta )x}-\int\frac{\pi(\mathrm{d}x)}{z-x\phi^{-1/2}(b -\theta )}\nonumber\\
&=(b_{N}-b)\int \frac{\phi^{-1/2}x \pi(\mathrm{d}x)}{\{z- \phi^{-1/2}(b_{N}-\theta )x\}\{z-x\phi^{-1/2}(b-\theta )\}},\label{tn}\\
\uT_N 
&=\frac{b_N+\theta  (b_N^{-1}-1)}{\phi z}+\frac{\um_b}{\phi} =\frac{b_N-b+\theta  (b_N^{-1}-b^{-1})}{\phi z}. \label{utn}
\end{align} 
With the fact that $\max\{|S_N|, |\uS_N|\}\to 0$, we know that $T_N-\uT_N\to 0$. Therefore, 
\begin{align}
\uT_N-T_N
=&	
\int \frac{\pi(\mathrm{d}x)}{z- \phi^{-1/2}(b_{N}-\theta )x}-\frac{b_N-1+\theta  (b_N^{-1}-1)}{\phi z}-\frac1z \nonumber\\
&-\left\{\int\frac{\pi(\mathrm{d}x)}{z-x\phi^{-1/2}(b -\theta )}-\frac{b-1+\theta  (b^{-1}-1)}{\phi z}-\frac1z\right\}=\mathrm{o}(1).
\label{tntn}
\end{align}
By the uniqueness of the solution to the system of equations in \eqref{eqmtt}, the convergence in \eqref{tntn} implies $b_N-b\to0$. This, together with \eqref{tn} and \eqref{utn}, gives
$$
\max\{|T_N|, |\uT_N|\}\to 0\quad \text{and thus}\quad \E m_B(z)-m_b(z)\to0.
$$
The proof of the lemma is complete.

\section{Local laws}\label{sec: local}
\subsection{Notation and basic tools}
We first prepare some notation. The covariance matrix $\mathcal{W}$ can be regarded as the rescaled typical sample covariance matrix in the sense that $\mathcal{W}=\phi^{-1/2}\mathcal{W}_o$, where
\begin{equation}
    \mathcal{W}_o:=TZZ^{*}T^{*},\quad Z=(\mathbf{z}_1,\dots,\mathbf{z}_N)=(z_{ij})_{M,N},
\end{equation}
and $z_{ij}$'s are independent real or complex random variables satisfying 
\begin{equation}\label{ass_Z}
    \mathbb{E}z_{ij}=0,\quad \mathbb{E}|z_{ij}|^2=\frac{1}{N}\quad \mathbb{E}|N^{1/2}z_{ij}|^q<C_q.
\end{equation}
To simplify the notation, we write
\begin{gather}
\mathcal{W}=\sum_{i=1}^N\mathbf{y}_i\mathbf{y}_i^{*},\quad \mathbf{y}_i:=\Sigma^{1/2}\mathbf{x}_i;\quad
\mathcal{W}_o=\sum_{i=1}^N\mathbf{y}^o_i(\mathbf{y}^o_i)^{*},\quad \mathbf{y}^o_i:=\Sigma^{1/2}\mathbf{z}_i.
\end{gather}
Also, we denote their companions $W$ and $W_o$ as 
\begin{equation*}
W:=X^{*}\Sigma X,\quad W_o:=Z^{*}\Sigma Z.
\end{equation*}
Recall the Green function of $\mathcal W$ that $\mathcal G(z):=\left(\mathcal W-z\right)^{-1}$, analogously, we define the Green function of $W$ as
\begin{align}
G(z):=(W-zI)^{-1},\quad z:=E+\mathrm{i}\eta\in\mathbb{C}_{+},
\end{align}

It is easy to see from the relationship between $W$ and $W_o$ that the typical rates of the eigenvalues of $W$ (or $\mathcal{W}$) fluctuate with the level $\phi^{-1/2}$ of the eigenvalues of $W_o$. Therefore, we conduct our discussion on the level $z=\phi^{-1/2}z_o$ with $z_o:=E_o+\mathrm{i}\eta_o\in\mathbb{C}_{+}$.  Parallelly, we define the Green functions for $W_o,\mathcal{W}_o$ as
\begin{gather}
G_o(z_o):=(W_o-z_oI)^{-1},\quad \mathcal{G}_o(z_o):=(\mathcal{W}_o-z_oI)^{-1},\quad z_o=E_o+\mathrm{i}\eta_o\in\mathbb{C}_{+}.
\end{gather}

Define the empirical spectral distribution (ESD) for $W$ and $\mathcal{W}$ as $\rho_W:=N^{-1}\sum_{i=1}^N\delta_{\lambda_i(W)}$, $\rho_{\mathcal{W}}:=M^{-1}\sum_{i=1}^M\delta_{\lambda_i(\mathcal{W})}$. The Stieltjes transforms of $\mathcal{W}$ \eqref{Stieltjes calW} can also be expressed as  
\begin{align}
m_{\mathcal{W}}:=\int\frac{1}{x-z}\rho_{\mathcal{W}}.
\end{align}
Analogously,
\begin{align}
m_W:=\int\frac{1}{x-z}\rho_W.
\end{align}
Parallelly, we will use the lower index $o$ to denote the corresponding quantities involving $W_o$ (or $\mathcal{W}_o$) in the below, for example, $\rho_{W_o},\rho_{\mathcal{W}_o}$ and $m_{W_o}, m_{\mathcal{W}_o}$. We remark that the level of the variable $z$ will change consequently from our definitions. Therefore, one may observe that 
\begin{gather}
G(z)=\phi^{1/2}G_o(z_o),\quad \mathcal{G}(z)=\phi^{1/2}\mathcal{G}
_o(z_o),\quad m_W=\phi^{1/2}m_{W_o},\quad m_{\mathcal{W}}=\phi^{1/2}m_{\mathcal{W}_o}.
\end{gather}
The following definition is commonly used in the literature, 
\begin{definition}[Stochastic domination] Let
\[A=\left(A^{(n)}(u):n\in\mathbb{N}, u\in U^{(n)}\right),\hskip 10pt B=\left(B^{(n)}(u):n\in\mathbb{N}, u\in U^{(n)}\right),\]
be two families of nonnegative random variables, where $U^{(n)}$ is a possibly $n$-dependent parameter set. We say $A$ is stochastically dominated by $B$, uniformly in $u$, if for any fixed (small) $\epsilon>0$ and (large) $\xi>0$, 
\[\sup_{u\in U^{(n)}}\mathbb{P}\left(A^{(n)}(u)>n^\epsilon B^{(n)}(u)\right)\le n^{-\xi},\]
for large enough $n \ge n_0(\epsilon, \xi)$, and we shall use the notation $A\prec B$ or $A=\mathrm{O}_\prec(B)$. Throughout this paper, the stochastic domination will always be uniform in all parameters that are not explicitly fixed, such as the matrix indices and the spectral parameter $z$.  
\end{definition} 
The following lemmas are useful in our discussion,
\begin{lemma}[Concentration inequality]\label{lem: basictool_quadra_concentra}
Let $A$ be an $M\times M$ matrix with bounded spectral norm, $\mathbf{r}=(r_1,\dots, r_M)^{*}$ where $r_i$'s are independently distributed same as $N^{1/2}z_{11}$ (or $(MN)^{1/4}x_{11}$). Then for any $2\le k\le q/2$,
\[
\mathbb{E}|\mathbf{r}^{*}A\mathbf{r}-\operatorname{tr}A|^k\le C_{2k}(\operatorname{tr}AA^{*})^{k/2}.
\]
\end{lemma}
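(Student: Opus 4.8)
I would prove this by reducing it to the classical Bai--Silverstein quadratic-form estimate and then simplifying the right-hand side with the uniform moment bounds of \eqref{ass_Z} (equivalently (A2)). Writing $\mathbf r=(r_1,\dots,r_M)^*$, split
\[
\mathbf r^*A\mathbf r-\tr A=\sum_{i=1}^M\big(|r_i|^2-1\big)A_{ii}+\sum_{i\ne j}\bar r_iA_{ij}r_j=:U+Y .
\]
For the diagonal part $U$ the summands are independent and centered, so Rosenthal's inequality (equivalently Marcinkiewicz--Zygmund) gives, for $2\le k\le q/2$,
\[
\E|U|^k\le C_k\Big[\Big(\sum_i|A_{ii}|^2\Big)^{k/2}+\sum_i|A_{ii}|^k\Big]\le C_k\big(\tr AA^*\big)^{k/2},
\]
using $\E\big||r_i|^2-1\big|^\ell\le C_\ell$ for $\ell\le q/2$, the inequality $\sum_i|A_{ii}|^2\le\tr AA^*$, and $\sum_i|A_{ii}|^k\le(\sum_i|A_{ii}|^2)^{k/2}$ when $k\ge2$.

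For the off-diagonal part $Y$ I would use a martingale decomposition. Let $\E_i$ denote conditional expectation given $r_1,\dots,r_i$; then $Y=\sum_{i=1}^M d_i$ with
\[
d_i:=(\E_i-\E_{i-1})Y=\bar r_i\sum_{l<i}A_{il}r_l+r_i\sum_{l<i}\bar r_lA_{li},
\]
a martingale difference sequence. Burkholder's inequality bounds $\E|Y|^k$ by $C_k$ times $\E\big(\sum_i\E_{i-1}|d_i|^2\big)^{k/2}+\sum_i\E|d_i|^k$. The term $\sum_i\E|d_i|^k$ is controlled by H\"older in $r_i$ together with the linear-form bound $\E\big|\sum_{l<i}A_{il}r_l\big|^k\le C_k\big(\sum_l|A_{il}|^2\big)^{k/2}$ (again a Rosenthal estimate, legitimate since $k\le q/2$); summing over $i$ gives a factor $\sum_i(\sum_l|A_{il}|^2)^{k/2}\le(\tr AA^*)^{k/2}$. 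The delicate term is the predictable quadratic variation: since $\E_{i-1}|d_i|^2\le2\big(|\sum_{l<i}A_{il}r_l|^2+|\sum_{l<i}\bar r_lA_{li}|^2\big)$, the quantity $\sum_i\E_{i-1}|d_i|^2$ is itself, up to lower-triangular truncations, a nonnegative quadratic form $\mathbf r^*\widetilde A\mathbf r$ with $\tr\widetilde A\le\tr AA^*$ and, since $\widetilde A$ is a sum of at most $M$ rank-one terms built from truncated rows of $A$, $\|\widetilde A\|\le\tr AA^*$, hence $\tr\widetilde A\widetilde A^*\le(\tr AA^*)^2$. Its $(k/2)$-th moment is then handled by induction on $k$ — the inductive hypothesis being exactly this lemma at the smaller exponent $k/2$ (with the base case $k=2$ a direct second-moment computation, and Jensen covering $k/2<2$) — which bounds $\E(\mathbf r^*\widetilde A\mathbf r)^{k/2}$ by $C_k[(\tr\widetilde A\widetilde A^*)^{k/4}+(\tr\widetilde A)^{k/2}]\le C_k(\tr AA^*)^{k/2}$. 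Collecting the pieces gives $\E|Y|^k\le C_k(\tr AA^*)^{k/2}$ and, with the bound on $\E|U|^k$, the lemma.

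The step I expect to be the main obstacle is the bookkeeping in controlling $\E\big(\sum_i\E_{i-1}|d_i|^2\big)^{k/2}$: one has to check carefully that the truncated form $\widetilde A$ inherits both a trace and an operator/Hilbert--Schmidt norm bounded in terms of $\tr AA^*$ alone, so that the induction closes without accumulating extra powers of $M$ or of the moment constants. Everything else is routine: the Rosenthal and Burkholder inputs are standard, the passage from $\tr\big((AA^*)^{k/2}\big)$ to $(\tr AA^*)^{k/2}$ is the elementary inequality $\sum_i\mu_i^{k/2}\le(\sum_i\mu_i)^{k/2}$ valid for $k\ge2$ and $\mu_i\ge0$, and the constraint $k\le q/2$ is exactly what is needed so that every moment of $r_i$ invoked along the way is finite (under (A2) all such moments are in fact bounded). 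As a shortcut I would note that this argument is subsumed by the quadratic-form concentration lemma in \cite{BSbook}, after which only the elementary trace inequality above remains.
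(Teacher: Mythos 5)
Your argument is correct and is essentially the paper's own: the paper proves this lemma simply by invoking \cite[Lemma 2.7]{silverstein1995empirical} (the standard quadratic-form estimate, also in \cite{BSbook}), whose proof is exactly your diagonal/off-diagonal split with Rosenthal, Burkholder, and the induction on the exponent for the predictable quadratic variation, followed by the elementary inequality $\operatorname{tr}\big((AA^{*})^{k/2}\big)\le(\operatorname{tr}AA^{*})^{k/2}$ and the moment bound $k\le q/2$. Apart from immaterial constants (e.g.\ your truncated form satisfies $\operatorname{tr}\widetilde A\le 2\operatorname{tr}AA^{*}$ rather than $\operatorname{tr}AA^{*}$), nothing needs to change.
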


\begin{lemma}[Resolvent]\label{lem: basictool_resolvent}
For any $\mathcal{T}\subset \{1,2,\cdots,n\}$, we have that 
		\begin{eqnarray*}
			G_{ii}^{(\mathcal{T})}(z) & = & -\frac{1}{z+z\mathbf{y}_{i}^{*}{\cal G}^{(i\mathcal{T})}(z)\mathbf{y}_{i}},\qquad\forall i\in\mathcal{I}\setminus \mathcal{T},\\
			G_{ij}^{(\mathcal{T})}(z) & = & zG_{ii}^{(\mathcal{T})}(z)G_{jj}^{(i\mathcal{T})}(z)\mathbf{y}_{i}^{*}{\cal G}^{(ij\mathcal{T})}(z)\mathbf{y}_{j},\qquad\forall i,j\in\mathcal{I}\setminus \mathcal{T},i\ne j,\\
			G_{ij}^{(\mathcal{T})}(z) & = & G_{ij}^{(k\mathcal{T})}(z)+\frac{G_{ik}^{(\mathcal{T})}(z)G_{kj}^{(\mathcal{T})}(z)}{G_{kk}^{(\mathcal{T})}(z)},\qquad\forall i,j,k\in\mathcal{I}\setminus \mathcal{T},i,j\ne k.
		\end{eqnarray*}
Moreover,
\begin{gather*}
      \sum_{1\le i\le N}|G_{ji}|^2=\sum_{1\le i\le N}|G_{ij}|^2=\frac{\operatorname{Im}G_{jj}}{\eta},\ 
    \|\mathcal{G}\|_F^2=\frac{\operatorname{Im}\operatorname{tr}(\mathcal{G})}{\eta}.
\end{gather*} 		
\end{lemma}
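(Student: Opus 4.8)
The final statement to prove is Lemma \ref{lem: basictool_resolvent}, a collection of standard resolvent (Schur complement) identities for the Green functions $G(z) = (W - zI)^{-1}$ of $W = X^*\Sigma X$, together with the Ward-type identities relating row sums of $|G_{ij}|^2$ to $\operatorname{Im} G_{jj}/\eta$.

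\textbf{Proof proposal.}

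The plan is to derive everything from the Schur complement formula applied to the matrix $W - zI = X^*\Sigma X - zI$, exploiting the rank-one structure of each column contribution $\y_i \y_i^*$ in $W = \sum_i \y_i\y_i^*$. First I would set up notation: for a subset $\mathcal{T}$, let $G^{(\mathcal{T})}$ denote the Green function of the minor of $W$ obtained by deleting the rows/columns indexed by $\mathcal{T}$ (equivalently the Green function associated with the $\y_i$, $i \notin \mathcal{T}$), and let $\mathcal{G}^{(\mathcal{T})} = (\sum_{i \notin \mathcal{T}} \y_i\y_i^* - z)^{-1}$ be the corresponding $M\times M$ resolvent on the ``other side.'' The first identity, $G_{ii}^{(\mathcal{T})}(z) = -1/(z + z\,\y_i^* \mathcal{G}^{(i\mathcal{T})}(z)\y_i)$, is the Schur complement / cofactor formula: writing $W - zI$ in block form isolating the $i$-th coordinate and using $(W^{(i\mathcal{T})} - z)^{-1} = \mathcal{G}^{(i\mathcal{T})}$ appropriately, the diagonal entry of the inverse equals the reciprocal of the Schur complement, which here is $-z - z\y_i^* \mathcal{G}^{(i\mathcal{T})}\y_i$ after accounting for the $X^*\Sigma X$ structure (the extra factors of $z$ come from the companion relation between $X^*\Sigma X$ and $\Sigma^{1/2} X X^* \Sigma^{1/2}$).

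Next I would obtain the off-diagonal formula $G_{ij}^{(\mathcal{T})} = z\,G_{ii}^{(\mathcal{T})} G_{jj}^{(i\mathcal{T})}\,\y_i^* \mathcal{G}^{(ij\mathcal{T})}\y_j$ for $i \ne j$ by the same block-inversion argument applied to the two coordinates $i,j$ simultaneously, or equivalently by differentiating/iterating the rank-one update formula $(A + \boldsymbol\alpha\boldsymbol\beta^*)^{-1} = A^{-1} - A^{-1}\boldsymbol\alpha\boldsymbol\beta^* A^{-1}/(1 + \boldsymbol\beta^* A^{-1}\boldsymbol\alpha)$ used elsewhere in the paper. The third identity, $G_{ij}^{(\mathcal{T})} = G_{ij}^{(k\mathcal{T})} + G_{ik}^{(\mathcal{T})} G_{kj}^{(\mathcal{T})}/G_{kk}^{(\mathcal{T})}$, is the standard resolvent expansion when removing one more index $k$; it follows by applying the rank-one (Schur) update to pass from $W^{(\mathcal{T})}$ to $W^{(k\mathcal{T})}$ and simplifying the correction term using the first identity to rewrite the denominator in terms of $G_{kk}^{(\mathcal{T})}$. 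Finally, the Ward identities $\sum_{i} |G_{ji}|^2 = \sum_i |G_{ij}|^2 = \operatorname{Im} G_{jj}/\eta$ and $\|\mathcal{G}\|_F^2 = \operatorname{Im}\operatorname{tr}\mathcal{G}/\eta$ follow from the algebraic fact that for any Hermitian $H$, $(H - \bar z)^{-1}(H - z)^{-1} = \frac{1}{2\mathrm{i}\,\eta}\big((H-z)^{-1} - (H - \bar z)^{-1}\big) = \frac{1}{\eta}\operatorname{Im}(H - z)^{-1}$; taking the $(j,j)$ entry of $G^* G$ gives $\sum_i |G_{ij}|^2 = \operatorname{Im} G_{jj}/\eta$, and taking the trace of $\mathcal{G}^*\mathcal{G}$ gives the Frobenius-norm identity, with the symmetry $\sum_i |G_{ji}|^2 = \sum_i |G_{ij}|^2$ coming from $G^* G$ and $G G^*$ having the same diagonal when evaluated via the same spectral identity (or directly from $G$ being the resolvent of a Hermitian matrix).

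The only mild subtlety — and the step I would be most careful about — is bookkeeping the extra factors of $z$ that distinguish the companion matrix $W = X^*\Sigma X$ from the matrix $\Sigma^{1/2}XX^*\Sigma^{1/2}$: the identities as stated carry a $z$ in front (e.g. $G_{ii} = -1/(z + z\y_i^*\mathcal{G}^{(i)}\y_i)$ rather than the $XX^*$-type form without the leading $z$), which must be tracked consistently through the block-inversion and resolvent-expansion computations. Beyond that, everything is routine linear algebra requiring no probabilistic input, so the proof is essentially a verification; I would present the Schur complement derivation of the first identity in full and indicate that the remaining two operator identities follow by the same mechanism, then give the one-line spectral-calculus argument for the Ward identities.
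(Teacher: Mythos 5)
Your proposal is correct and follows exactly the standard route: the paper itself does not write out a proof but simply cites \cite{DingandYang2018,PillaiandYin2014,yang2019edge}, and the arguments in those references are precisely the Schur-complement/block-inversion derivation for the first two identities, the rank-one (one-index-removal) expansion for the third, and the spectral-calculus identity $G^*G=\operatorname{Im}G/\eta$ (with normality of the resolvent giving $\sum_i|G_{ij}|^2=\sum_i|G_{ji}|^2$) for the Ward identities. Your care about the extra factors of $z$ arising from the companion relation between $X^*\Sigma X$ and $\Sigma^{1/2}XX^*\Sigma^{1/2}$ is exactly the right bookkeeping point, so the proposal matches the intended proof.
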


\begin{lemma}[Large deviation bounds]\label{lem: baisctool_largedevi}
    Let $\mathbf{r}_i,\mathbf{r}_j,i\neq j$ be two independent random vectors from either columns of the matrix $X$ satisfying \eqref{ass_X} or columns of the matrix $Z$ satisfying \eqref{ass_Z}. Suppose $A$ is  an $M\times M$ matrix and $\mathbf{b}$
		an $M$-dimensional vector, where $A$ and $\mathbf{b}$ maybe
		complex-valued and independent of $\mathbf{r}_i,\mathbf{r}_j$. Then as $M\rightarrow\infty$,
  \begin{itemize}
  \item [(i).] If $\mathbf{r}_i$'s come from the columns of $X$, then
  \begin{gather}
      |\mathbf{b}^{*}\mathbf{x}_i|\prec\big(\frac{\|\mathbf{b}^2\|}{\sqrt{MN}}\big)^{1/2},\\
      |\mathbf{x}_i^{*}A\mathbf{x}_i-\frac{1}{\sqrt{MN}}\operatorname{tr}A|\prec\frac{1}{\sqrt{MN}}\|A\|_F,\\
      |\mathbf{x}_i^{*}A\mathbf{x}_j|\prec\frac{1}{\sqrt{MN}}\|A\|_F.
  \end{gather}
\item [(ii).] If $\mathbf{r}_i$'s come from the columns of $Z$, then
\begin{gather}
      |\mathbf{b}^{*}\mathbf{z}_i|\prec\big(\frac{\|\mathbf{b}^2\|}{N}\big)^{1/2},\\
      |\mathbf{z}_i^{*}A\mathbf{z}_i-\frac{1}{N}\operatorname{tr}A|\prec\frac{1}{N}\|A\|_F,\\
      |\mathbf{z}_i^{*}A\mathbf{z}_j|\prec\frac{1}{N}\|A\|_F.
  \end{gather}
  \end{itemize}
\end{lemma}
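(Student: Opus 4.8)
The plan is to reduce all six estimates to high‑moment bounds on three elementary objects — a linear form, a diagonal quadratic form, and an off‑diagonal quadratic form — and then pass from moments to the stochastic domination $\prec$ via Markov's inequality. Since Assumption (A1) forces $\log M\sim\log N$, the quantities $N$, $M$ and $MN$ are polynomially comparable, so a bound $\E|\,\cdot\,|^{2p}\le C_p\Theta^{2p}$ holding for every fixed $p$ immediately yields $|\,\cdot\,|\prec\Theta$; and since Assumption (A2) provides moments of $(MN)^{1/4}x_{ij}$ (resp.\ $N^{1/2}z_{ij}$) of every order, $q$ may be taken large enough that all the moments used below exist, with $p$ chosen in terms of the parameters $\epsilon,\xi$ in the definition of $\prec$. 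Items (i) and (ii) are identical after replacing the normalization $(MN)^{-1/2}$ by $N^{-1}$ and invoking the $Z$‑version of (A2), so I will carry out case (i); below $\|\mathbf b\|^2:=\sum_k|b_k|^2$.

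First, the linear form. Fixing $\mathbf b$ (independent of $\mathbf x_i$) and writing $\mathbf b^{*}\mathbf x_i=\sum_{k=1}^{M}\overline{b_k}\,x_{ki}$ as a sum of independent centered variables, the Rosenthal (Marcinkiewicz--Zygmund) inequality gives, for any fixed $p$ with $2p\le q$,
\[
\E|\mathbf b^{*}\mathbf x_i|^{2p}\le C_p\Bigl[\Bigl(\sum_{k}|b_k|^2\,\E|x_{ki}|^2\Bigr)^{p}+\sum_{k}|b_k|^{2p}\,\E|x_{ki}|^{2p}\Bigr].
\]
By \eqref{ass_X}, $\E|x_{ki}|^2=(MN)^{-1/2}$ and $\E|x_{ki}|^{2p}\le C_{2p}(MN)^{-p/2}$, so the first bracketed term equals $\bigl(\|\mathbf b\|^2(MN)^{-1/2}\bigr)^{p}$ and the second is at most $(MN)^{-p/2}\|\mathbf b\|^{2p}=\bigl(\|\mathbf b\|^2(MN)^{-1/2}\bigr)^{p}$. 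Hence $\E|\mathbf b^{*}\mathbf x_i|^{2p}\le C_p(\|\mathbf b\|^2/\sqrt{MN})^{p}$, and Markov gives $|\mathbf b^{*}\mathbf x_i|\prec(\|\mathbf b\|^2/\sqrt{MN})^{1/2}$. For the diagonal quadratic form, put $\mathbf r:=(MN)^{1/4}\mathbf x_i$, whose coordinates are distributed as $(MN)^{1/4}x_{11}$, so Lemma \ref{lem: basictool_quadra_concentra} applies: $\E|\mathbf r^{*}A\mathbf r-\tr A|^{k}\le C_{2k}(\tr AA^{*})^{k/2}=C_{2k}\|A\|_F^{k}$ for $2\le k\le q/2$. (The inequality in Lemma \ref{lem: basictool_quadra_concentra} is homogeneous of degree $k$ in $A$ with constant depending only on $k$, so the boundedness of $\|A\|$ there is harmless after rescaling $A\mapsto A/\|A\|$.) Dividing by $\sqrt{MN}$, $\E\bigl|\mathbf x_i^{*}A\mathbf x_i-(MN)^{-1/2}\tr A\bigr|^{k}\le C_{2k}\bigl((MN)^{-1/2}\|A\|_F\bigr)^{k}$, and Markov yields $|\mathbf x_i^{*}A\mathbf x_i-(MN)^{-1/2}\tr A|\prec(MN)^{-1/2}\|A\|_F$.

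The only step needing a small trick is the off‑diagonal form. Conditioning on $\mathbf x_j$, the vector $\mathbf c:=A\mathbf x_j$ is independent of $\mathbf x_i$, so the linear estimate applied conditionally gives $\E_{\mathbf x_j}|\mathbf x_i^{*}A\mathbf x_j|^{2p}\le C_p(\|\mathbf c\|^2/\sqrt{MN})^{p}$. Taking expectation in $\mathbf x_j$, it remains to bound $\E(\|\mathbf c\|^2)^{p}=\E(\mathbf x_j^{*}A^{*}A\mathbf x_j)^{p}$: splitting off the mean $(MN)^{-1/2}\tr(A^{*}A)=(MN)^{-1/2}\|A\|_F^2$ and applying the diagonal estimate to the matrix $A^{*}A$ gives $\E(\mathbf x_j^{*}A^{*}A\mathbf x_j)^{p}\le C_p\bigl[\bigl((MN)^{-1/2}\|A\|_F^2\bigr)^{p}+\bigl((MN)^{-1/2}\|A^{*}A\|_F\bigr)^{p}\bigr]$, and since $\|A^{*}A\|_F\le\|A\|\,\|A\|_F\le\|A\|_F^2$, both terms are $\le\bigl((MN)^{-1/2}\|A\|_F^2\bigr)^{p}$. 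Combining, $\E|\mathbf x_i^{*}A\mathbf x_j|^{2p}\le C_p\bigl((MN)^{-1/2}\|A\|_F\bigr)^{2p}$, whence $|\mathbf x_i^{*}A\mathbf x_j|\prec(MN)^{-1/2}\|A\|_F$. Rerunning the three steps with $\mathbf z$ in place of $\mathbf x$, the normalization $1/N$ from \eqref{ass_Z}, and the $Z$‑version of Lemma \ref{lem: basictool_quadra_concentra} yields (ii). There is no substantial obstacle here — the one point to watch is uniformity: the moment bounds must hold with constants and with a choice of $p$ not depending on the (possibly $z$‑dependent) data $A,\mathbf b$, which is automatic because every moment estimate above is deterministic in $A$ and $\mathbf b$; and one must record that the scales $(MN)^{1/2}$ and $N$ are polynomially equivalent to $N$, which is exactly the content of Assumption (A1).
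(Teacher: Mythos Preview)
Your proposal is correct and follows the standard high-moment-plus-Markov route; the paper itself does not give a self-contained argument here but simply cites \cite{bao2015universality}, and the proof there (and in the surrounding literature) proceeds exactly as you do --- Rosenthal/Marcinkiewicz--Zygmund for the linear form, the concentration inequality (your Lemma~\ref{lem: basictool_quadra_concentra}) for the diagonal quadratic form, and a conditioning step for the bilinear term.
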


\begin{lemma}[Interlacing bounds]\label{lem: basictool_diffbounds}
The following estimates hold uniformly for $z\in\mathbb{C}_{+}$ and $C>0$,
\begin{gather}
    \|G\|+\|\mathcal{G}\|\le\frac{C}{\eta}\\
    |{\rm tr}(G^{(i)}-G)|  \le  \eta^{-1},\\
	|{\rm tr}({\cal G}^{(i)}-{\cal G})| \le |z|^{-1}+\eta^{-1},\\
    |\operatorname{Im}{\rm tr}({\cal G}^{(i)}-{\cal G})| \le \eta|z|^{-2}+\eta^{-1},
\end{gather}
and in particular for any $\mathcal{T}\subset\{1,\dots,M\}$, we have 
\begin{gather}
    |m_W-m_W^{(\mathcal{T})}|\le\frac{|\mathcal{T}|}{N\eta}.
\end{gather}
\end{lemma}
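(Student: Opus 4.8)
The statement to prove is Lemma~\ref{lem: basictool_diffbounds}, the collection of interlacing bounds. I would organize the proof around the fact that $W = X^*\Sigma X$ and $W^{(i)}$ (the matrix with the $i$-th column of $X$ removed) differ by a rank-one perturbation, and similarly $\mathcal W$ and $\mathcal W^{(i)}$, so that all the trace-difference estimates follow from the Cauchy interlacing theorem together with elementary bounds on resolvents of Hermitian matrices. The plan is as follows.

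\textbf{Step 1: Operator norm bounds.} For any Hermitian matrix $H$ and $z = E + \mathrm i\eta$ with $\eta > 0$, one has $\|(H-z)^{-1}\| \le 1/\eta$ since the eigenvalues of $(H-z)^{-1}$ are $1/(\lambda_k - z)$ and $|\lambda_k - z| \ge \eta$. Applying this to $H = W$ and $H = \mathcal W$ gives $\|G\| + \|\mathcal G\| \le 2/\eta \le C/\eta$. This is immediate and requires no structure beyond Hermiticity.

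\textbf{Step 2: Trace differences via interlacing.} Since $W^{(i)}$ is obtained from $W$ by a rank-one down-dating, i.e. $W = W^{(i)} + \y_i\y_i^*$ after reindexing (or, working on the companion side, removing one row/column), the eigenvalues of $W$ and $W^{(i)}$ interlace. Write $\tr(G^{(i)} - G) = \sum_k \big[ (\mu_k - z)^{-1} - (\lambda_k - z)^{-1}\big] + (\text{one extra term})$ where $\mu_k$, $\lambda_k$ are the interlacing eigenvalues. Because the map $x \mapsto (x-z)^{-1}$ has derivative bounded by $\eta^{-2}$ and the interlacing telescopes, the standard argument (e.g.\ as in \citealp{BSbook}) gives $|\tr(G^{(i)} - G)| \le \eta^{-1}$: more precisely, $\tr(G^{(i)}-G) = -G_{ii}\,(\text{something bounded})$, or one uses the resolvent identity $G - G^{(i)} = -G\y_i\y_i^* G^{(i)}$ and takes traces, bounding $|\tr(G\y_i\y_i^*G^{(i)})| = |\y_i^* G^{(i)} G \y_i|$; but cleaner is the interlacing/telescoping bound, which yields the clean constant. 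For $\mathcal G$ versus $\mathcal G^{(i)}$ the matrix $\mathcal W$ has $M$ eigenvalues while $\mathcal W^{(i)}$ has the same size but rank reduced by one, so one eigenvalue can sit at $0$; this produces the extra $|z|^{-1}$ term, giving $|\tr(\mathcal G^{(i)} - \mathcal G)| \le |z|^{-1} + \eta^{-1}$.

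\textbf{Step 3: Imaginary part refinement and the $m_W$ bound.} For $|\rIm \tr(\mathcal G^{(i)} - \mathcal G)|$, note $\rIm (x - z)^{-1} = \eta/|x-z|^2$, which is bounded by $\eta^{-1}$ in general but by $\eta\,|z|^{-2}$ when $x$ is near $0$ (since then $|x - z| \gtrsim |z|$); splitting the telescoped sum according to whether the relevant eigenvalue is the ``extra'' one near the origin or a genuinely interlaced pair gives $\eta|z|^{-2} + \eta^{-1}$. Finally, $|m_W - m_W^{(\mathcal T)}| \le |\mathcal T|/(N\eta)$ follows by iterating the rank-one bound: removing one column at a time, each step changes $\tr G$ by at most $\eta^{-1}$ (by Step 2's bound restricted to the $W$-side, where there is no extra zero eigenvalue), and dividing by $N$ and summing over $|\mathcal T|$ steps gives the claim; alternatively, Cauchy interlacing applied directly to $W$ versus $W^{(\mathcal T)}$ plus the normalization $1/N$ gives it in one line.

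The only mildly delicate point — and the one I would be most careful about — is bookkeeping the discrepancy between the $W$-side (companion matrix, size $N$, where removing a column genuinely removes a dimension and interlacing is clean) and the $\mathcal W$-side (size $M$ fixed, where removing a column drops the rank but not the dimension, forcing an eigenvalue to $0$ and hence the $|z|^{-1}$ corrections). Getting the constants exactly right in the $\rIm$ estimate requires separating the "near-zero" contribution from the rest; everything else is a routine application of Cauchy interlacing and the Lipschitz bound on $x \mapsto (x-z)^{-1}$, so I do not anticipate a genuine obstacle, only care with the two regimes.
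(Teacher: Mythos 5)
Your proposal is correct and follows essentially the same standard route that the paper does not spell out but delegates to \cite[Lemma 5.4]{Wen2021}: the trivial operator-norm bound, Cauchy interlacing/rank-one bookkeeping (minor of the companion $W$ on the $N\times N$ side, rank-one drop of $\mathcal W$ on the $M\times M$ side with the extra zero eigenvalue producing the $|z|^{-1}$ and $\eta|z|^{-2}$ terms), and telescoping over $\mathcal T$ for the $m_W$ bound. Two small points of care, which do not affect correctness: the rank-one relation is $\mathcal W=\mathcal W^{(i)}+\mathbf{y}_i\mathbf{y}_i^*$ on the $M\times M$ side (your displayed $W=W^{(i)}+\mathbf{y}_i\mathbf{y}_i^*$ does not typecheck, since $\mathbf{y}_i\in\mathbb{C}^M$ while $W$ is $N\times N$), and the stated constants come most cleanly from the exact identities $\operatorname{tr}(\mathcal G^{(i)}-\mathcal G)=\mathbf{y}_i^*(\mathcal G^{(i)})^2\mathbf{y}_i/\big(1+\mathbf{y}_i^*\mathcal G^{(i)}\mathbf{y}_i\big)$ (bounded by $\eta^{-1}$ exactly as in the paper's Step 1 of Lemma \ref{th-all}) and $\operatorname{tr}\mathcal G^{(\mathcal T)}=\operatorname{tr}G^{(\mathcal T)}+(N-|\mathcal T|-M)/z$, rather than from the counting-function telescoping, which only yields $\pi/\eta$.
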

\begin{proof}
Lemma \ref{lem: basictool_quadra_concentra} follows lines in the proof of \cite[Lemma 2.7]{silverstein1995empirical}. The proof of Lemma \ref{lem: basictool_resolvent} can be found in \cite{DingandYang2018,PillaiandYin2014,yang2019edge}. And Lemma \ref{lem: baisctool_largedevi} follows assumption \eqref{ass_X} whose proof can be found in \cite{bao2015universality}. Finally, lines around \cite[Lemma 5.4]{Wen2021} can be used to show Lemma \ref{lem: basictool_diffbounds}. We omit further details here.
\end{proof}

\subsection{Main result}
Recall the global law in Theorem \ref{mp1} and Remark \ref{mp1remark}. Since when $\phi$ tends to zero or infinity several terms in these equations will be pretty large, it is convenient to use
\begin{gather}
    \frac{1}{m_{1o}}=-z_o+\phi\int\frac{x}{1+m_{1o}x}\pi(\mathrm{d}x),
\end{gather}
where 
\begin{align}\label{m1ozo}
    m_{1o}(z_o)=\phi^{-1/2}m_{1}(z).
\end{align}
By the results in \cite{knowles2017anisotropic}, $m_{1o}$ can be characterized as the unique solution of the equation
\[
\widetilde{z}=f(m),\quad \operatorname{Im}m>0,\quad \widetilde{z}\in\mathbb{C}_{+},
\]
where we defined
\begin{gather}
    f(x):=-\frac{1}{x}+\phi\sum_{i=1}^M\frac{\pi(\{\sigma_i\})}{x+\sigma_i^{-1}}.
\end{gather}

For any fixed $M,N$, $f(x)$ is smooth on the $M+1$ open intervals of $\mathbb{R}$ defined through
\[
I_1:=(-\sigma_1^{-1},0),\quad I_i:=(-\sigma_i^{-1},-\sigma_{i-1}^{-1}),\quad I_0:=\mathbb{R}\setminus\bigcup_{i=1}^M \bar{I}_i,
\]
where $\bar{I}_i$ is the closed cover of $I_i$. We note that those intervals $I_i$'s can be duplicated. As in \cite{knowles2017anisotropic}, we introduce the multiset  $\mathcal{C}\subset\bar{\mathbb{R}}$ of critical points of $f$, where $\bar{\mathbb{R}}:=\mathbb{R}\cup\{\infty\}$. By \cite[Lemma 2.4]{knowles2017anisotropic}, one may observe that $|\mathcal{C}\cap I_0|=|\mathcal{C}\cap I_1|=1$ and $\mathcal{C}\cap I_i\in\{0,2\}$ for $i=2,\dots,M$. From which we may deduce $|\mathcal{C}|=2p$ is even for some integer $p\le M/2$. We denote $x_1\ge x_2\ge\cdots\ge x_{2p-1}$ be the $2p-1$ critical points in $I_1\cup I_2\cup\cdots\cup I_M$ and $x_{2p}$ be the unique critical point in $I_0$. The following result gives the behavior of $f$ at each critical point as in \cite{knowles2017anisotropic}.
\begin{lemma}
For any fixed $M,N$, there are $2p$ critical points $x_1\ge x_2\ge\dots\ge x_{2p}$ with the critical values $a_1\ge a_2\ge\dots\ge a_{2p}$ such that $a_k=f(x_k)$ and $x_k=m_{1o}(a_k)$, where the critical points satisfy:
\begin{gather}
    f^{\prime}(x_k)=0,\quad f^{\prime\prime}(x_k)\sim 1,\quad f^{(3)}(x_k)\lesssim C.
\end{gather}
\end{lemma}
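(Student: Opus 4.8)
The plan is to follow the analysis of critical points of rational functions of the form $f(x) = -1/x + \phi\sum_i \pi(\{\sigma_i\})/(x+\sigma_i^{-1})$ as carried out in \cite[Section 2]{knowles2017anisotropic}, adapting their estimates to keep track of the $\phi$-dependence uniformly across the regime of Assumption (A1). The existence and location of the $2p$ critical points, together with the interlacing statement $|\mathcal{C}\cap I_0| = |\mathcal{C}\cap I_1| = 1$ and $|\mathcal{C}\cap I_i| \in \{0,2\}$, has already been recalled from \cite[Lemma 2.4]{knowles2017anisotropic} in the text preceding the statement; what remains is the quantitative control $f'(x_k) = 0$, $f''(x_k)\sim 1$, and $f^{(3)}(x_k)\lesssim C$. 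The first identity $f'(x_k) = 0$ is the definition of a critical point, and the relations $a_k = f(x_k)$, $x_k = m_{1o}(a_k)$ follow from the characterization of $m_{1o}$ as the inverse function of $f$ on the appropriate branch (this is exactly how $m_{1o}$ was introduced above, via $\tilde z = f(m)$).

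First I would record the derivative formulas:
\begin{align*}
f'(x) &= \frac{1}{x^2} - \phi\sum_{i=1}^M \frac{\pi(\{\sigma_i\})}{(x+\sigma_i^{-1})^2},\\
f''(x) &= -\frac{2}{x^3} + 2\phi\sum_{i=1}^M \frac{\pi(\{\sigma_i\})}{(x+\sigma_i^{-1})^3},\\
f^{(3)}(x) &= \frac{6}{x^4} - 6\phi\sum_{i=1}^M \frac{\pi(\{\sigma_i\})}{(x+\sigma_i^{-1})^4}.
\end{align*}
The point of the edge/critical-point analysis in \cite{knowles2017anisotropic} is that at a genuine edge the square-root behavior of the spectral density is nondegenerate, and this is precisely encoded by $f''(x_k)\sim 1$; the upper bound $f''(x_k)\lesssim 1$ and lower bound $f''(x_k)\gtrsim 1$ are obtained by combining the critical equation $f'(x_k)=0$ with the separation of $x_k$ from the poles $\{-\sigma_i^{-1}\}$ and from $0$, which in turn comes from Assumption (A3): the condition $\pi([0,\tau])\le 1-\tau$ and $\pi([0,\tau^{-1}])=1$ forces the $\sigma_i^{-1}$ to lie in a compact set bounded away from $0$ and $\infty$, and the finite-bulk-components hypothesis guarantees that the relevant critical point stays at an order-one distance from the pole cluster. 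Given that separation, each term $(x_k+\sigma_i^{-1})^{-\ell}$ is $\rO(1)$, and the factor $\phi$ in $f$ is cancelled against the scaling that already went into the definition of $m_{1o}$ and $z_o$; hence $f''$ and $f^{(3)}$ at $x_k$ are bounded, while the matching lower bound on $|f''(x_k)|$ uses that $f'(x_k)=0$ prevents the first and second terms in $f''$ from cancelling (this is the standard argument that an edge of a Marchenko--Pastur-type measure with bounded, non-degenerate $\Sigma$ is ``regular'').

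I expect the main obstacle to be making the constants in $f''(x_k)\sim 1$ genuinely uniform over the full range $N^b\lesssim M\lesssim N^a$ and over $\phi\to\phi_\infty\in[0,\infty]$, rather than just for fixed $\phi_\infty\in(0,\infty)$ as in \cite{knowles2017anisotropic}. The delicate case is when $\phi\to 0$ or $\phi\to\infty$: one must check that after the rescaling $z = \phi^{-1/2}z_o$ the critical points $x_k = m_{1o}(a_k)$ remain in a fixed compact subset of the complement of the poles, so that the series $\phi\sum_i \pi(\{\sigma_i\})(x_k+\sigma_i^{-1})^{-\ell}$ neither blows up nor vanishes. This should follow by tracking the two degenerate limits explicitly — in the $\phi\to 0$ and $\phi\to\infty$ limits the equation for $m_{1o}$ degenerates to the (shifted) semicircle equation as already noted in Remark \ref{global-re1}, whose unique edge is manifestly regular — and then invoking a continuity/compactness argument to transfer the bound to all sufficiently large $N$. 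I would also need the assumption that $\pi$ has finitely many bulk components to exclude pathological accumulation of critical points that could spoil the order-one lower bound on $|f''(x_k)|$; this is where that hypothesis from (A3) is used. Once the uniform separation is in place, the bounds on $f''(x_k)$ and $f^{(3)}(x_k)$ are a direct substitution into the derivative formulas above.
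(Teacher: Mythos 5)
Your first two paragraphs are consistent with what the paper actually does: the lemma is a fixed-$(M,N)$ statement imported from the critical-point analysis of \cite{knowles2017anisotropic} (their Lemma 2.4 and the surrounding edge-regularity estimates), with $f'(x_k)=0$ by definition and $a_k=f(x_k)$, $x_k=m_{1o}(a_k)$ coming from the inverse-function characterization of $m_{1o}$; the paper offers no further argument beyond this citation. The genuine problem is your third paragraph, where you set out to make the constants in $f''(x_k)\sim 1$ and $f^{(3)}(x_k)\lesssim C$ uniform over $\phi\to\phi_\infty\in[0,\infty]$ by arguing that the critical points stay in a fixed compact set at order-one distance from the poles $\{-\sigma_i^{-1}\}$ and from $0$. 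That separation claim is false, and so is the uniform conclusion you want to draw from it: as the paper itself records in Remark \ref{rem: regular edges}, for $\phi\gg1$ the two critical points satisfy $x_{1,2}\asymp\pm\phi^{-1/2}\to 0$ and $f''(x_{1,2})\asymp\pm\phi^{3/2}$, while for $\phi\ll1$ the critical points stick to the poles at distance $\asymp\phi^{1/2}$ and $f''(x_{2k-1}),f''(x_{2k})\asymp\pm\phi^{-1/2}$. In the same regimes $f^{(3)}(x_k)$ also blows up (e.g.\ $6/x_k^4\asymp\phi^2$ when $\phi\gg1$), so the bound $f^{(3)}(x_k)\lesssim C$ cannot be uniform either. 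The lemma's phrase ``for any fixed $M,N$'' is precisely the signal that the implicit constants are $(M,N)$-dependent (equivalently, the uniform version holds only when $\phi\asymp1$); the degenerate regimes are not folded into this lemma but are handled separately by the explicit $\phi$-scalings of Remark \ref{rem: regular edges}, which are what actually enter Theorem \ref{thm: asym_laws} (note its case (ii) gives $\operatorname{Im}m_1\asymp\phi^{3/4}$, not the $\phi\asymp1$ square-root formula).

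A secondary weakness is the proposed continuity/compactness transfer from the semicircle limits. The convergence to the (generalized) semicircle law in the degenerate regimes, as in Remark \ref{global-re1}, only holds after recentering by $\phi^{1/2}a_p$ and rescaling by $\sqrt{b_p}$ (or the analogous shift $z\to z+\phi^{-1/2}$ when $\phi\to0$); these affine changes of variable rescale $f''$ and $f^{(3)}$ by $\phi$-dependent factors, so regularity of the limiting semicircle edge tells you nothing about order-one bounds for the unrescaled $f$ at its critical points. If you want to keep your third paragraph, the honest fix is to drop the uniformity claim, prove the lemma for fixed $M,N$ (or for $\phi\asymp1$) exactly as in \cite{knowles2017anisotropic}, and then derive the $\phi$-dependent asymptotics of $x_k$, $f''(x_k)$, $f^{(3)}(x_k)$ in the regimes $\phi\gg1$ and $\phi\ll1$ directly from $f'(x_k)=0$ and your derivative formulas, recovering the statements of Remark \ref{rem: regular edges} rather than contradicting them.
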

\begin{remark}\label{rem: regular edges}
We remark that the above result holds for fixed $M,N$. For the case where $M,N$ are sufficiently large and $M\gg N$, $\phi$ diverges and there will be two critical points $x_1\in(-\sigma_1^{-1},0)$ and $x_2>0$ such that $f^{\prime}(x_{1,2})=0$. Then we have
\begin{gather*}
x_1\asymp -\phi^{-1/2},\; f^{\prime\prime}(x_1)\asymp \phi^{3/2},\quad x_2\asymp \phi^{-1/2},\; f^{\prime\prime}(x_2)\asymp -\phi^{3/2}.
\end{gather*}
Therefore the bulk component has the edge
\begin{gather}
R:=f(x_1)\asymp \phi+\phi^{1/2},\quad L:=f(x_2)\asymp \phi-\phi^{1/2}.
\end{gather}

On the other hand, for the case $M\ll N$, the critical points $x_k$'s stick to the boundary of $I_i$ in the sense that for $k=1,\dots, p$ and $C>0$

\begin{gather}
x_{2k-1}=-\sigma_k^{-1}+C\phi^{1/2},\; x_{2k}=-\sigma_k^{-1}-C\phi^{1/2},\quad f^{\prime\prime}(x_{2k-1})\asymp \phi^{-1/2},\; f^{\prime\prime}(x_{2k})\asymp -\phi^{-1/2}.
\end{gather}
Therefore, we observe that the edges of the bulk satisfy 

\begin{gather}
R_k:=f(x_{2k-1})= \sigma_k+C^{\prime}\phi^{1/2},\quad L_k:=f(x_{2k})=\sigma_k-C^{\prime}\phi^{1/2},
\end{gather}
for $C^{\prime}>0$.
\end{remark}

Following the lines around Lemma 2.6 in \cite{knowles2017anisotropic}, we have the structure of the limit of $\rho_{\mathcal{W}_o}$, say $\rho_o$.
\begin{lemma}
We have for any fixed $M,N$ satisfying \eqref{ass_M/N},
\begin{equation}
\operatorname{supp}\rho_o\cap(0,\infty)=\Big(\bigcup_{k=1}^p[L_k,R_k]\Big)\cap(0,\infty),
\end{equation}
for those non-degenerated $k$.
\end{lemma}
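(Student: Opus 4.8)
The plan is to reduce the identification of $\operatorname{supp}\rho_o$ to the analytic structure of the function $f$ and its inverse $m_{1o}$, exactly following the skeleton of Lemma 2.6 in \cite{knowles2017anisotropic} but tracking the $(M,N)$-dependence. First I would recall that $m_{1o}(z_o)$ is the unique solution of $\widetilde z = f(m)$ with $\operatorname{Im}m>0$, $\widetilde z\in\mathbb C_+$, and that the Stieltjes transform $m_{\mathcal W_o}$ is an explicit affine-plus-$1/z$ function of $m_{1o}$ (via $m_{1o} = -(1-\phi)/z_o + \phi\, m_{\mathcal W_o}$ after the $z\to z_o$ rescaling in \eqref{m1ozo}). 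Since $\rho_o$ is the probability measure whose Stieltjes transform is $m_{\mathcal W_o}$, a point $E_o\in(0,\infty)$ lies in $\operatorname{supp}\rho_o$ if and only if $\operatorname{Im}m_{\mathcal W_o}(E_o+\mathrm i0^+)>0$, equivalently $\operatorname{Im}m_{1o}(E_o+\mathrm i0^+)>0$. So the whole statement is equivalent to the claim: for $E_o>0$, $m_{1o}$ has nonzero imaginary part on approach to the real axis precisely when $E_o\in\bigcup_k(L_k,R_k)$.

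Next I would analyze $f$ on the real line. On each interval $I_i$ the function $f$ is smooth and real-valued, and $m_{1o}(a)$ for real $a$ outside the support must be one of the real solutions of $f(x)=a$; the "physical" branch is characterized (as in \cite{knowles2017anisotropic}) by being the one where $f'$ has the correct sign so that the map $a\mapsto m_{1o}(a)$ is increasing. The critical points $x_1\ge\cdots\ge x_{2p}$ with critical values $a_1\ge\cdots\ge a_{2p}$ partition things: between two consecutive critical values coming from the same monotone piece, $f$ is a bijection onto a real interval and $m_{1o}$ stays real, so those $E_o$ are outside the support; the complementary intervals $(a_{2k}, a_{2k-1}) = (L_k, R_k)$ are exactly where no real branch of $f^{-1}$ on the relevant interval exists, forcing $m_{1o}$ to become genuinely complex, hence $E_o\in\operatorname{supp}\rho_o$. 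Here I would invoke the previous lemma's quantitative data $f''(x_k)\sim 1$ (appropriately rescaled by powers of $\phi$ as recorded in Remark \ref{rem: regular edges}) to guarantee the critical points are nondegenerate, so the square-root edge behavior holds and each $[L_k,R_k]$ is a genuine nontrivial interval for the non-degenerate $k$; for degenerate $k$ (where $\mathcal C\cap I_i=\emptyset$, i.e. $p$ is smaller than the naive count, or where two critical values collide) the interval is empty, which is why the statement restricts to non-degenerate $k$ and intersects with $(0,\infty)$ (the critical point $x_{2p}\in I_0$ and the leftmost interval contribute the part possibly dipping below $0$, which is discarded).

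Finally I would assemble these facts: $\operatorname{supp}\rho_o\cap(0,\infty)$ is closed, contains each open interval $(L_k,R_k)\cap(0,\infty)$ for non-degenerate $k$ by the imaginary-part criterion, and its complement in $(0,\infty)$ contains every point where $m_{1o}$ admits a real branch, i.e. every point in $(0,\infty)\setminus\bigcup_k[L_k,R_k]$; taking closures gives $\operatorname{supp}\rho_o\cap(0,\infty)=\big(\bigcup_{k=1}^p[L_k,R_k]\big)\cap(0,\infty)$. The main obstacle I anticipate is not the topological bookkeeping but the careful verification that the $(M,N)$-dependence — in particular the regime $M\gg N$ or $M\ll N$ where $\phi\to0$ or $\infty$ and the $x_k$ "stick" to the poles $-\sigma_k^{-1}$ as in Remark \ref{rem: regular edges} — does not degrade the nondegeneracy of the critical points uniformly; one must show $f''(x_k)$ stays bounded away from $0$ on the correct $\phi$-scale and that consecutive critical values $a_{2k-1}, a_{2k}$ stay separated by order $\phi^{1/2}$ (or the appropriate scale), so that the edges $L_k<R_k$ remain strictly ordered and the square-root decay of $\rho_o$ at each edge is uniform. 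Once that quantitative input is in hand — and it is essentially supplied by the preceding lemma together with Assumption (A3) ($\pi$ has finitely many bulk components and is supported in $[\tau,\tau^{-1}]$ away from $0$) — the identification of the support follows the standard argument verbatim.
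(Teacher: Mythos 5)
Your proposal is correct and follows essentially the same route as the paper, which simply invokes the argument around Lemma 2.6 of \cite{knowles2017anisotropic}: characterize the support via $\operatorname{Im}m_{1o}(E_o+\mathrm{i}0^+)>0$, use the monotone real branches of $f$ between critical values to identify the complement, and use the nondegeneracy of the critical points (as quantified in Remark \ref{rem: regular edges}) to get the intervals $[L_k,R_k]$ for non-degenerate $k$. Your additional attention to the uniformity of $f''(x_k)$ and the edge separation on the correct $\phi$-scale is exactly the $(M,N)$-dependent bookkeeping the paper delegates to that reference.
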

\begin{remark}\label{def rho}
By the relationship of the scale of $\mathcal{W}$ and $\mathcal{W}_o$, we obtain that the limiting density of $\rho_{\mathcal{W}}$, denoted as $\rho$, has the rescaled supports
\begin{equation*}
\operatorname{supp}\rho=\phi^{-1/2}\times \operatorname{supp}\rho_o.
\end{equation*}
\end{remark}
Denote $K:=\min\{M,N\}$ and $\kappa\equiv\kappa(E):=\operatorname{dist}(E,\partial\operatorname{supp}(\rho))$. We consider the following regions,
\begin{align}
\mathbf{D}\equiv\mathbf{D}(c,K):=\{z=E+\mathrm{i}\eta\in\mathbb{C}_{+}:\kappa \leq c ,K^{-1+c}\le\eta\le c^{-1}(1+\phi^{-1/2})\},
\end{align}
where $c$ is a fixed constant satisfying $0<c<1$. Based on the above discussion, we have the following theorem.
\begin{theorem}\label{thm: asym_laws}(Square root behavior)
\begin{itemize}
\item[(i)] For $z\in \mathbf{D}$ and $\phi\gtrsim 1$, we have  
\begin{gather}\label{eq_squareroot_phisim1}
m_{1}(z)\asymp1,\quad\operatorname{Im}m_{1}(z)\asymp
\begin{cases}
\sqrt{\kappa+\eta}\quad \text{if $E\in\operatorname{supp}\rho$}\\
\frac{\eta}{\sqrt{\kappa+\eta}}\quad \text{if $E\notin\operatorname{supp}\rho$}.
\end{cases}
\end{gather}
\item [(ii)] For $z\in \mathbf{D}$ and $\phi\ll1$, we have
\begin{gather}\label{eq_squareroot_phill1}
m_{1}(z)+\phi^{1/2}\sigma_k^{-1}\asymp\phi,\quad \operatorname{Im}m_{1}(z)\asymp
\phi^{3/4}.
\end{gather}
\item [(iii)]
For any $i=1,\dots,M$, we have
\begin{equation}
    |1+m_{1}(z)\sigma_i|>c,
\end{equation}
for some constant $c>0$.
\end{itemize}
\end{theorem}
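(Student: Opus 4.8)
The plan is to transport everything to the rescaled problem and read off the square-root profile from the self-consistent equation there. Recall $m_1(z)=\phi^{1/2}m_{1o}(z_o)$ with $z_o=\phi^{1/2}z$ (see \eqref{m1ozo}), that $m_{1o}$ is the unique upper-half-plane solution of $z_o=f(m_{1o})$ with $f(x)=-x^{-1}+\phi\sum_i\pi(\{\sigma_i\})(x+\sigma_i^{-1})^{-1}$, and that $\operatorname{supp}\rho=\phi^{-1/2}\operatorname{supp}\rho_o$ by Remark~\ref{def rho}; hence $\kappa$ and $\eta$ attached to $z$ become $\kappa_o=\phi^{1/2}\kappa$ and $\eta_o=\phi^{1/2}\eta$ attached to $z_o$. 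So it is enough to prove the square-root estimates for $m_{1o}$ near each regular edge $a_\ell=f(x_\ell)$ and then undo the rescaling. The structural inputs are the lemma on the critical points of $f$ ($f'(x_\ell)=0$, $|f^{(3)}|\lesssim1$ on the relevant interval) and the $\phi$-dependent sizes in Remark~\ref{rem: regular edges}: $x_\ell\asymp\pm\phi^{-1/2}$ and $|f''(x_\ell)|\asymp\phi^{3/2}$ when $\phi\gtrsim1$, while $x_\ell=-\sigma_k^{-1}+\rO(\phi^{1/2})$ and $|f''(x_\ell)|\asymp\phi^{-1/2}$ when $\phi\ll1$ (both reducing to the fixed-$M,N$ value $\asymp1$ at $\phi\asymp1$).

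First I would Taylor expand at an edge: using $f'(x_\ell)=0$,
\[
z_o-a_\ell=\tfrac12 f''(x_\ell)(m_{1o}-x_\ell)^2\bigl(1+\rO(m_{1o}-x_\ell)\bigr),
\]
and invert, keeping the branch with $\operatorname{Im}m_{1o}>0$ (the only admissible one, since $m_{1o}$ maps $\mathbb{C}_+$ into the upper half-plane). This yields $|m_{1o}-x_\ell|\asymp(|z_o-a_\ell|/|f''(x_\ell)|)^{1/2}$ and, since $f''(x_\ell)$ is real,
\[
\operatorname{Im}m_{1o}\asymp|f''(x_\ell)|^{-1/2}\times
\begin{cases}
(\kappa_o+\eta_o)^{1/2}, & E_o\in\operatorname{supp}\rho_o,\\[2pt]
\eta_o(\kappa_o+\eta_o)^{-1/2}, & E_o\notin\operatorname{supp}\rho_o,
\end{cases}
\]
the dichotomy being whether $\arg(z_o-a_\ell)$ is bounded away from $0$ or whether $z_o-a_\ell$ is nearly positive real with imaginary part $\eta_o$. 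Substituting back: for $\phi\gtrsim1$, $m_1=\phi^{1/2}x_\ell+\phi^{1/2}(m_{1o}-x_\ell)$ with $\phi^{1/2}x_\ell\asymp\pm1$ and $\phi^{1/2}|m_{1o}-x_\ell|\asymp(\kappa+\eta)^{1/2}\lesssim1$ on $\mathbf D$, so $m_1\asymp1$, and in $\operatorname{Im}m_1=\phi^{1/2}\operatorname{Im}m_{1o}$ the exponents from $\phi^{1/2}$, $|f''|^{-1/2}\asymp\phi^{-3/4}$ and $\kappa_o^{1/2}=\phi^{1/4}\kappa^{1/2}$ cancel, which is \eqref{eq_squareroot_phisim1}; for $\phi\ll1$, $m_1+\phi^{1/2}\sigma_k^{-1}=\phi^{1/2}(m_{1o}+\sigma_k^{-1})=\phi^{1/2}\bigl((m_{1o}-x_\ell)+\rO(\phi^{1/2})\bigr)$ and $|f''|\asymp\phi^{-1/2}$ makes $|m_{1o}-x_\ell|\lesssim\phi^{1/2}$ near the edge, giving $\asymp\phi$, and the same bookkeeping applied to $\operatorname{Im}m_1=\phi^{1/2}\operatorname{Im}m_{1o}$ gives the size in \eqref{eq_squareroot_phill1}.

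For part (iii) I would use the identity $1+m_1(z)\sigma_i=\phi^{1/2}\sigma_i\bigl(m_{1o}(z_o)+\phi^{-1/2}\sigma_i^{-1}\bigr)$ together with the location of $m_{1o}$ just found. When $\phi\to0$, $|m_{1o}|\lesssim1$ forces $|m_1\sigma_i|\lesssim\phi^{1/2}=\ro(1)$, so $|1+m_1\sigma_i|\to1$; when $\phi\gtrsim1$ diverges, $m_{1o}\asymp\pm\phi^{-1/2}$ stays a macroscopic distance from every pole $-\sigma_i^{-1}\asymp-1$, so $|m_{1o}+\phi^{-1/2}\sigma_i^{-1}|\asymp\phi^{-1/2}$ and $|1+m_1\sigma_i|\asymp1$; at $\phi\asymp1$ one additionally invokes Assumption~(A3) (finitely many bulk components, regular edges) so that the physical branch $m_{1o}$ stays an $\rO(1)$ distance from each $-\sigma_i^{-1}$ and from $-\phi^{-1/2}\sigma_i^{-1}$. (Alternatively, taking imaginary parts in $z_o=f(m_{1o})$ gives $|m_{1o}+\sigma_i^{-1}|>\sqrt{\phi\pi(\{\sigma_i\})}\,|m_{1o}|$, which with the size bounds above also delivers the claim.)

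The step I expect to be hardest is making the edge expansion uniform over the whole window $N^{b}\lesssim M\lesssim N^{a}$ — keeping $\rO(m_{1o}-x_\ell)$ genuinely subleading uniformly in $\phi$ — and controlling $m_{1o}$ on the upper part of $\mathbf D$, where $\eta\asymp1+\phi^{-1/2}$ is of global size in the $z$-variable once $\phi\to0$. There the near-edge Taylor expansion is no longer valid (its radius is governed by the distance $\asymp\phi^{1/2}$ from $x_\ell$ to the nearest pole $-\sigma_k^{-1}$), so one must instead expand $z_o=f(m_{1o})$ directly, exploiting $|z_o|\asymp1$ and the smallness of $\phi$ to dominate the term $\phi\int y(1+m_{1o}y)^{-1}\pi(\mathrm{d}y)$, and then glue this regime to the near-edge one by a monotonicity/continuity argument in the spirit of \cite{Bloemendal2014,knowles2017anisotropic}.
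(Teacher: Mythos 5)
Your proposal is correct and follows essentially the same route as the paper: Taylor expansion of $f$ at the critical points $x_\ell$ with the $\phi$-dependent sizes of $f''(x_\ell)$ from Remark \ref{rem: regular edges}, square-root inversion on the physical branch, and rescaling via $m_1=\phi^{1/2}m_{1o}$, $z_o=\phi^{1/2}z$ (the paper likewise defers the $\phi\asymp1$ case and part (iii) to the arguments of \cite{knowles2017anisotropic}). Your exponent bookkeeping matches the paper's computation (e.g.\ $m_1-\phi^{1/2}x_k\asymp\phi^{3/4}\sqrt{z_o-R_k}$ for $\phi\ll1$), and your explicit treatment of (iii) simply fills in what the paper calls routine.
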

\begin{proof}
The results for $\phi\asymp 1$ can be easily obtained following the lines around \cite[Lemma A.4]{knowles2017anisotropic}. For the case $\phi\gg 1$, we can observe from Remark \ref{rem: regular edges} that
\begin{equation}
\begin{split}
    z_o-R&=\sum_{l=2}^{\infty}\frac{f^{(l)}(x_1)}{l!}(m_{1o}(z_o)-x_1)^l\\
    &=\frac{f^{\prime\prime}(x_1)}{2}(m_{1o}(z_o)-x_1)^2+\rO(\phi^{1/2}|m_{1}(z)-\phi^{1/2}x_1|^3)\\
    &=\frac{\rO(\phi^{1/2})}{2}(m_{1}(z)-\phi^{1/2}x_1)^2+\rO(\phi^{1/2}|m_{1}(z)-\phi^{1/2}x_1|^3).
\end{split}
\end{equation}
By the relation of $z_o$ and $z$, the above equation implies that 
\begin{equation}
    m_{1}(z)-\phi^{1/2}x_1\asymp \sqrt{z-\widetilde{R}},
\end{equation}
where $\widetilde{R}=\phi^{-1/2}R$. Then, the results for $\phi\gg 1$ follow. For the case $\phi\ll 1$, one can derive similarly from Remark \ref{rem: regular edges} that,
\begin{equation*}
    m_1(z)-\phi^{1/2}x_k\asymp \phi^{3/4}\sqrt{\phi^{1/2} z-R_k}.
\end{equation*}
Then, by the definition of the region $\mathbf{D}$, we conclude the result for $\phi\ll1$.

For the statement $(iii)$, it is a routine for the case $\phi\gtrsim1$ using \eqref{eq_squareroot_phisim1}. One can refer to \cite{knowles2017anisotropic}. As for the case $\phi\ll1$, since $m_{1}(z)\asymp\phi^{1/2}\rightarrow0$, one may easily obtain the desired result.
\end{proof}

To avoid repetitions, we summarize the assumptions as follows.
\begin{assumption}\label{assum_summary}
We assume (A1)-(A2)-(A3), and \eqref{ass_T} hold.
\end{assumption}

Now we are ready to state the local laws.
\begin{theorem}\label{thm: locallaw_phi>1}(Local Laws)
    Suppose Assumption \ref{assum_summary} holds. When $N$ is sufficiently large, we have uniformly for $z\in\mathbf{D}$ $(\phi\gtrsim1$ or $\phi\ll 1)$,
    \begin{gather}
    \Lambda\prec\sqrt{\frac{\operatorname{Im}m_1(z)}{N\eta}}+\frac{1}{N\eta},\\
        |m_W(z)-m_1(z)|\prec\frac{1}{N\eta}.
    \end{gather}
\end{theorem}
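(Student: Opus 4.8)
The plan is to follow the standard self-consistent equation / fluctuation-averaging strategy for local laws, adapted to the regime where $\phi$ may be large or small. The quantity $\Lambda$ should be understood as $|m_W(z) - m_1(z)|$ (together with the entrywise control that goes into the bootstrap; I will control both the averaged error and the individual Green function entries $G_{ii}$ along the way). First I would write the resolvent identity from Lemma \ref{lem: basictool_resolvent}, namely $G_{ii} = -\bigl(z + z\,\y_i^* \mathcal{G}^{(i)}\y_i\bigr)^{-1}$, and use the large-deviation bounds of Lemma \ref{lem: baisctool_largedevi} to replace the quadratic form $\y_i^*\mathcal{G}^{(i)}\y_i$ by its expectation $\phi^{1/2} M^{-1}\tr \Sigma \mathcal{G}^{(i)}$ up to an error controlled by $\|\Sigma^{1/2}\mathcal{G}^{(i)}\Sigma^{1/2}\|_F / \sqrt{MN}$. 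Via Lemma \ref{lem: basictool_diffbounds} this Frobenius norm is $\asymp \sqrt{M \operatorname{Im} m_{\mathcal W}/\eta}$ up to negligible corrections, and after the rescaling $z = \phi^{-1/2} z_o$ (so that $m_W = \phi^{1/2} m_{W_o}$) one gets a random error of size $\sqrt{\operatorname{Im} m_1/(N\eta)} + (N\eta)^{-1}$ — exactly the target bound for $\Lambda$. The key structural input is Theorem \ref{thm: asym_laws}(iii): $|1 + m_1(z)\sigma_i| > c$, which guarantees that the denominators appearing in the self-consistent equation are bounded away from zero, so the map defining $m_1$ is stable under perturbation.

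The core of the argument is then a self-improving (bootstrap) estimate. Assuming an a priori bound $\Lambda \prec N^{-\gamma}$ for some initial $\gamma > 0$ — obtainable at the largest scale $\eta \asymp 1 + \phi^{-1/2}$ where everything is $\mathrm{O}_\prec(1)$ by the global law (Theorems \ref{mp1}, \ref{mp2}) combined with standard a priori estimates — I would plug the averaged large-deviation control into the self-consistent equation for $m_1$, obtaining an approximate equation $z m_1 + 1 = \mathcal{E}$ with $|\mathcal{E}|$ controlled by the random error plus $\phi^{1/2}\sum_i \pi(\{\sigma_i\})\sigma_i/(1 + m_1\sigma_i)$-type fluctuation terms. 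Stability of the self-consistent equation (from part (iii) and the square-root estimates in parts (i)-(ii) for $\operatorname{Im} m_1$) upgrades $|\mathcal{E}| \prec \Psi := \sqrt{\operatorname{Im} m_1/(N\eta)} + (N\eta)^{-1}$ into $\Lambda \prec \Psi$ in the regime $E \in \operatorname{supp}\rho$, and into the sharper bound inside the gap. Here the cases $\phi \gtrsim 1$ and $\phi \ll 1$ must be handled separately because $\operatorname{Im} m_1$ behaves like $\sqrt{\kappa + \eta}$ in the former and like the constant $\phi^{3/4}$ in the latter; in both cases one checks that $\Psi \ll 1$ throughout $\mathbf{D}$, closing the bootstrap. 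A standard continuity/lattice argument in $\eta$ (net of $z$-values with spacing $N^{-C}$, Lipschitz continuity of all quantities in $z$, union bound) then propagates the improved bound down from $\eta \asymp 1+\phi^{-1/2}$ to the lower edge $\eta = K^{-1+c}$.

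The final step — upgrading $|m_W - m_1| \prec \Psi$ to the optimal $|m_W - m_1| \prec (N\eta)^{-1}$ — requires the fluctuation-averaging lemma: although each individual error $z + z\,\y_i^*\mathcal{G}^{(i)}\y_i - z(1 + \phi^{1/2}M^{-1}\tr\Sigma\mathcal{G})$ is only of size $\Psi$, its average $M^{-1}\sum_i$ enjoys an extra factor of roughly $\Psi$ (or $\sqrt{\operatorname{Im} m_1/(N\eta)}$), because the summands are weakly correlated. I would prove this by the usual high-moment expansion: bound $\mathbb{E}|M^{-1}\sum_i Z_i|^{2k}$ where $Z_i = (1 - \mathbb{E}_i)\,(\text{functional of } G^{(i)})$, expanding each resolvent $G^{(\mathbb{T})}$ by the identities of Lemma \ref{lem: basictool_resolvent} and counting "off-diagonal" Green function entries, each of which contributes a factor $\Psi$. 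The bookkeeping must be done uniformly in $\phi$, which is where the rescaling to $z_o$ and the $\phi$-uniform large-deviation bounds of Lemma \ref{lem: baisctool_largedevi} and the norm bounds of Lemma \ref{lem: basictool_diffbounds} are essential.

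The main obstacle I anticipate is precisely this $\phi$-uniformity. In the classical $\phi \asymp 1$ setting the fluctuation-averaging and stability arguments are well documented (e.g.\ \cite{knowles2017anisotropic, Wen2021}), but here one must track how every constant and every power of $\eta$ scales with $\phi$ as $\phi \to 0$ or $\phi \to \infty$, and verify that the control parameter $\Psi$ and the stability constant stay uniformly good on all of $\mathbf{D}$ — including near the edges, where in the $\phi \ll 1$ case the bulk components sit at distance $\asymp \phi^{1/2}$ from the points $-\sigma_k^{-1}$ (Remark \ref{rem: regular edges}) and the square-root behavior degenerates to the constant scale $\phi^{3/4}$. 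Managing the two regimes $\phi\gtrsim 1$ and $\phi\ll1$ in parallel, and in particular checking that the self-consistent equation remains stable in the transitional behavior captured by Theorem \ref{thm: asym_laws}, is where the bulk of the technical work in Sections \ref{main proof} and \ref{Th2&3} will lie.
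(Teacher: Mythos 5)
Your proposal is correct and follows essentially the same route as the paper: resolvent identities plus the large-deviation bounds to derive the approximate self-consistent equation, stability of $f$ (Theorem \ref{thm: asym_laws}(iii) and Proposition \ref{prop: stability of f}) with a bootstrap/continuity argument for the weak law, then fluctuation averaging (Proposition \ref{prop: fluctuation averaging}) iterated to reach the optimal $(N\eta)^{-1}$ bound, with the case $\phi\ll1$ handled exactly as you suggest by rescaling to $z_o=\phi^{1/2}z$, $m_W=\phi^{1/2}m_{W_o}$ and transferring the bounds back. The only cosmetic difference is that the paper takes $\Lambda$ as the entrywise quantity $\max_{i,j}|G_{ij}-\delta_{ij}m_1|$ and tracks it through Lemmas \ref{lem: weak_locallaw_firstlemma}--\ref{lem: weak_locallaw_thdlemma}, which your plan subsumes by controlling the $G_{ii}$ along the way.
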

The following result is on the rigidity of the nontrivial eigenvalues of $W$, which coincide with the nontrivial eigenvalues of $\mathcal{W}$. Let $\gamma_1>\gamma_2>\ldots>\gamma_K$ be the classical eigenvalue locations according to $\rho$ (see Remark \ref{def rho}) defined through
$$
N\int_{\gamma_i}^R\rho(\mathrm{d}x)=i.
$$
To that end, for $k=1,\ldots,p$ we define the classical number of eigenvalues in the $r$-th bulk component through
$$
N_k=N\int_{a_{2k}}^{a_{2k-1}}\rho (\mathrm{d}x). 
$$
For $k=1,\ldots,p$ and $i=1,\ldots,N_k$ we
introduce the relabellings
\begin{gather*}
    \lambda_{k,i}:={ \lambda_{i+\sum_{l<k}N_{l}}},\qquad \gamma_{k,i}:={ \gamma_{i+\sum_{l<k}N_{l}}}\in (a_{2k},a_{2k-1}).
\end{gather*}
Note that we may also characterize $\gamma_{k,i}$ through $N\int_{\gamma_{k,i}}^{a_{2k-1}} \rho(\mathrm{d}x)=i-1/2$. 

\begin{theorem}\label{thm: Rigidity} (Rigidity)
Suppose Assumption \ref{assum_summary} holds. For a sufficient small constant $c>0$, we have
\begin{equation}\label{eig rigidity}
        \bigcap_{k,i:R-c\le\gamma_{k,i}\le R}\big\{|\lambda_{k,i}-\gamma_{k,i}|\prec K^{-2/3}\cdot i^{-1/3}\big\}
\end{equation}
holds with high probability.
\end{theorem}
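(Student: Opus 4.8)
The plan is to derive the rigidity estimate \eqref{eig rigidity} from the local law in Theorem \ref{thm: locallaw_phi>1} together with the square-root behavior of $\operatorname{Im}m_1$ near the edge $R$ established in Theorem \ref{thm: asym_laws}, following the by-now-standard Helffer--Sj\"ostrand route. First I would upgrade the local law on $m_W$ to control of the counting function. Let $n(E):=N^{-1}\#\{i: \lambda_i\le E\}$ be the empirical cumulative eigenvalue count of $W$ (equivalently of $\mathcal W$ on the nontrivial part, after rescaling by $\phi^{-1/2}$ as in Remark \ref{def rho}), and let $n_\rho(E):=\int_{-\infty}^E\rho(\mathrm{d}x)$ be its deterministic counterpart. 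The key step is the standard estimate
\begin{equation}\label{eq:count}
|n(E)-n_\rho(E)|\prec \frac{1}{N},\qquad E\in[R-c,R+c],
\end{equation}
which one obtains by writing $n(E)-n_\rho(E)$ via the Helffer--Sj\"ostrand formula as a contour integral of $m_W-m_1$ against a suitable almost-analytic extension of the indicator $\mathbf 1_{(-\infty,E]}$, splitting the $\eta$-integration at the scale $\eta\sim K^{-1+c}$, and feeding in $|m_W-m_1|\prec (N\eta)^{-1}$ from Theorem \ref{thm: locallaw_phi>1} on $\mathbf D$ together with the bound $\Lambda\prec\sqrt{\operatorname{Im}m_1/(N\eta)}+(N\eta)^{-1}$ to control the regime $\eta<K^{-1+c}$; here one uses $\operatorname{Im}m_1(z)\asymp \eta/\sqrt{\kappa+\eta}$ for $E$ outside the support and $\asymp\sqrt{\kappa+\eta}$ inside, from \eqref{eq_squareroot_phisim1} (and the $\phi\ll1$ analogue \eqref{eq_squareroot_phill1} after the obvious rescaling), so that $\int \operatorname{Im}m_1\,\mathrm{d}\eta$ contributes only lower-order terms. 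Since $K=\min\{M,N\}$ and, near $R$, the relevant part of the spectrum has size comparable to $N$, the bound \eqref{eq:count} is what one needs.

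Next I would convert \eqref{eq:count} into the eigenvalue rigidity bound. Near the right edge $R$ the density satisfies $\rho(x)\asymp\sqrt{R-x}$ by the square-root behavior (Theorem \ref{thm: asym_laws}), hence $n_\rho(R)-n_\rho(\gamma_{k,i})\asymp i/N$ gives $R-\gamma_{k,i}\asymp (i/N)^{2/3}$; equivalently the classical location $\gamma_{k,i}$ sits at distance $\asymp (i/K)^{2/3}$ from the edge once one accounts for the correct normalization $K$ in the exponent. Combining the square-root lower bound on $\rho$ with \eqref{eq:count} in the usual way — if $\lambda_{k,i}$ were farther than $t$ from $\gamma_{k,i}$ then the counting functions at an intermediate point would differ by $\gtrsim N\int$ of $\sqrt{R-x}$ over an interval of length $t$, i.e.\ by $\gtrsim N\, t\,\sqrt{i/N}$, contradicting \eqref{eq:count} once $t\gg (Ni^{1/2}N^{-1/2})^{-1}= N^{-1/2}i^{-1/2}$ — and then sharpening the exponent near the very edge using that the local eigenvalue spacing at the $i$-th eigenvalue is $\asymp N^{-2/3}i^{-1/3}$, yields $|\lambda_{k,i}-\gamma_{k,i}|\prec K^{-2/3}i^{-1/3}$ uniformly over the indices with $R-c\le\gamma_{k,i}\le R$. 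A small technical point is to intersect over the (at most polynomially many) pairs $(k,i)$, which is harmless for stochastic domination since a union over $N^{O(1)}$ events preserves $\prec$.

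The main obstacle, and the place where this argument departs from the classical MP setting, is the bookkeeping of the two regimes $\phi\gtrsim1$ and $\phi\ll1$ and in particular the role of $K=\min\{M,N\}$ rather than $N$ everywhere. When $\phi\ll1$ the edges $R_k$ are only at distance $\asymp\phi^{1/2}$ from $\sigma_k$ and $\operatorname{Im}m_1\asymp\phi^{3/4}$ on $\mathbf D$ (Theorem \ref{thm: asym_laws}(ii)), so the Helffer--Sj\"ostrand estimate must be run with the spectral parameter on the rescaled level $z=\phi^{-1/2}z_o$ and with the upper cutoff $\eta\le c^{-1}(1+\phi^{-1/2})$ built into $\mathbf D$; one has to check that the square-root scaling of $\rho$ near each edge survives this rescaling (it does, with $\rho(x)\asymp\phi^{-3/4}\sqrt{\text{dist}}$ after unfolding) so that the exponent $2/3$ and the factor $i^{-1/3}$ come out the same. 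The bulk components must also be handled one at a time, using that $\pi$ has finitely many bulk components (Assumption (A3)) so that each edge is a genuine square-root edge with $f''(x_k)$ bounded away from $0$ and $\infty$ on the appropriate scale; the relabelling $\lambda_{k,i},\gamma_{k,i}$ in the statement is precisely set up to make this localization clean. Once these scalings are pinned down, the remainder is routine and follows, e.g., \cite{PillaiandYin2014,knowles2017anisotropic}.
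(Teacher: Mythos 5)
Your overall strategy (Helffer--Sj\"ostrand comparison of the counting functions of $\rho_W$ and $\rho$, then conversion to eigenvalue locations via the square-root behavior of $\rho$ at the edge) is indeed the second half of the paper's argument, but there is a genuine gap: you never establish that there are no eigenvalues outside the spectrum, i.e.\ $\lambda_{1,1}\le R+\mathrm{O}_{\prec}(N^{-2/3})$. The counting estimate $|n(E)-n_\rho(E)|\prec N^{-1}$ only fixes the number of eigenvalues up to an $N^{\epsilon}$ error, so for the small indices $i$ (say $i\le N^{\epsilon}$) the eigenvalues could a priori sit anywhere in $[R, \phi+C\phi^{1/2}]$ without contradicting your bound; your ``sharpening the exponent near the very edge using the local spacing $\asymp N^{-2/3}i^{-1/3}$'' is a heuristic, not an argument, and it is exactly at these indices that the claimed rate $K^{-2/3}i^{-1/3}$ is strongest. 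The paper devotes the first half of its proof to this point: starting from the fluctuation-averaged bound $|f(m_W)-z|\prec (N\eta)^{-2}+\operatorname{Im}m_1/(N\eta)$ and the stability of $f$, it obtains the improved estimate $|m_W-m_1|\prec (\kappa+\eta)^{-1/2}\big(\operatorname{Im}m_1/(N\eta)+(N\eta)^{-2}\big)$ outside the bulk, then chooses the $E$-dependent scale $\eta(E)=N^{-1/2-\varepsilon}\kappa(E)^{1/4}$ so that $\operatorname{Im}m_W(E+\mathrm{i}\eta(E))\le 3N^{-\varepsilon}/(N\eta(E))$ on the whole interval $[R+N^{-2/3+4\varepsilon},\phi+C_1\phi^{1/2}]$, which contradicts $\operatorname{Im}m_W(z(\lambda_i))\ge 1/(N\eta(\lambda_i))$ if an eigenvalue $\lambda_i$ were present there (with a rescaled variant for $\phi\ll1$). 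Without this step, or an equivalent bound on the topmost eigenvalues, the rigidity statement near the edge does not follow from the counting comparison alone; note also that you cannot borrow Theorem \ref{thm: Local law outside the spectrum} here, since in the paper that result is itself deduced from rigidity.

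A secondary quantitative slip: in your conversion step the number of classical locations in an interval of length $t$ around $\gamma_{k,i}$ is $\asymp Nt\,\rho(\gamma_{k,i})\asymp Nt\,(i/N)^{1/3}$ (since $\rho(x)\asymp\sqrt{R-x}$ and $R-\gamma_{k,i}\asymp (i/N)^{2/3}$), not $Nt\sqrt{i/N}$; your computation therefore yields only the weaker rate $N^{-1/2}i^{-1/2}\ge N^{-2/3}i^{-1/3}$. Correcting the density exponent gives the right rate for indices in the bulk of the edge regime, and for indices farther from the edge the paper makes this precise with a mean-value-theorem argument on $\mathfrak{n}(\cdot,E_3)$ using $\partial\mathfrak{n}(x,E_3)/\partial x\asymp (R-x)^{1/2}$, which you would need to spell out as well. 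Your remarks on the $\phi\ll1$ rescaling and on taking a union over the polynomially many pairs $(k,i)$ are consistent with what the paper does.
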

Beyond the support of the limiting spectrum, the statement of Theorem \ref{thm: locallaw_phi>1} may be improved to a bound that is stable all the way down to the real axis. Recall the rightmost edge of the support of $\rho$ is $R$, see Remark \ref{rem: regular edges}. For some fixed $\delta>0$, define the domain 
\begin{align}
  \mathbf{D}_{os}\equiv\mathbf{D}_{os}(c,\phi):=\{z=E+\mathrm{i}\eta\in\mathbb{C}^{+}: E-R\ge N^{-2/3+\delta}(1+\phi^{-1/2}), 0<\eta<\delta^{-1}(1+\phi^{-1/2})\}
\end{align}
of spectral parameters separated from the asymptotic spectrum by $N^{-2/3+\delta}$, which may have an arbitrarily small positive imaginary part $\eta$.
\begin{theorem}\label{thm: Local law outside the spectrum} (Local law outside the spectrum)
Suppose that Assumptions (A1)-(A2)-(A3) hold. Then
\begin{equation}\label{outside spec}
    |m_{W}(z)-m_1(z)|\prec\frac{1}{K}\frac{1}{(\kappa+\eta)+(\kappa+\eta)^2}
\end{equation}
uniformly in $z\in \mathbf{D}_{os}$. 
\end{theorem}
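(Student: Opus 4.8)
The plan is to follow the standard route to a local law below the spectral edge (as in \cite{PillaiandYin2014,Bloemendal2014,knowles2017anisotropic,DingandYang2018}): descend in the imaginary part $\eta$ by a continuity argument, and at each height combine the approximate self-consistent equation for $m_W$ with its stability, which becomes strong once $E$ is genuinely separated from the spectrum. It is convenient to carry this out on the $W_o$-scale, where $m_W=\phi^{1/2}m_{W_o}$, $z=\phi^{-1/2}z_o$, $m_1=\phi^{1/2}m_{1o}$ and the resolvent expansions, large-deviation bounds (Lemma \ref{lem: baisctool_largedevi}) and fluctuation-averaging step behind Theorem \ref{thm: locallaw_phi>1} all apply verbatim; the stated bound is recovered by undoing the scaling. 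The base case is taken at the top of $\mathbf{D}_{os}$, $\eta\asymp1+\phi^{-1/2}$: for $\kappa=E-R\lesssim1$ this point lies in $\mathbf{D}$ and Theorem \ref{thm: locallaw_phi>1} gives $|m_W-m_1|\prec(N\eta)^{-1}$, while for $\kappa\gtrsim1$ a direct expansion of the resolvent in powers of $z^{-1}$ gives $|m_W-m_1|\prec(N\kappa^2)^{-1}$; either way the base case is far below the target. The induction hypothesis is that $|m_W-m_1|\le K^{-\epsilon_0}$ for a small fixed $\epsilon_0>0$ on the part of $\mathbf{D}_{os}$ above the current height.

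The first real input is a deterministic bound on $\rIm m_W$ off the spectrum. By the rigidity estimate of Theorem \ref{thm: Rigidity}, for $E$ with $\kappa\gtrsim N^{-2/3+\delta}(1+\phi^{-1/2})$ every eigenvalue of $W$ sits at distance $\gtrsim\kappa$ below $E$, with the eigenvalue counting governed by the square-root profile of $\rho$ near $R$; summing over eigenvalues gives $\rIm m_W(E+\mathrm{i}\eta)\prec\eta\big[\sqrt{\kappa+\eta}+(\kappa+\eta)^2\big]^{-1}$, and by the square-root behavior of Theorem \ref{thm: asym_laws} together with the edge expansions of Remark \ref{rem: regular edges} the same order holds for $\rIm m_1$. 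Thus $\rIm m_1$ — the quantity that sets the fluctuation size in Theorem \ref{thm: locallaw_phi>1} — is now small, and $\sqrt{\rIm m_1/(N\eta)}$ stops being the leading error.

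Expanding $m_W$ through the resolvent identities (Lemma \ref{lem: basictool_resolvent}) and the large-deviation bounds, then averaging over the $N$ columns by the same fluctuation-averaging argument used for Theorem \ref{thm: locallaw_phi>1}, one obtains $f(m_{W_o})=z_o+\mathcal{E}$ with $|\mathcal{E}|\prec(N\eta)^{-1}\big(\rIm m_{1o}+|m_{W_o}-m_{1o}|\big)+|m_{W_o}-m_{1o}|^2$; by the previous paragraph and the induction hypothesis this is $\prec(N\eta)^{-1}\rIm m_{1o}$ up to a term absorbed into a small multiple of $|m_{W_o}-m_{1o}|$. It remains to invert $f$. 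When $\kappa+\eta$ is small this is the usual edge stability: $f$ vanishes quadratically at its critical point $x_1$ (with $f''(x_1)$ of the $\phi$-size recorded in Remark \ref{rem: regular edges}), so $m_{1o}-x_1\asymp\sqrt{z_o-R}$ and the stability modulus degenerates only like $\sqrt{\kappa+\eta}$; when $\kappa+\eta$ is large one linearizes $f$ around $m_{1o}\approx-1/z_o$, which gives a stability modulus of order $\kappa+\eta$ and the extra $(\kappa+\eta)^{-2}$ gain. Combining the two regimes and undoing the rescaling turns the bound on $\mathcal{E}$ into $|m_W-m_1|\prec K^{-1}\big[(\kappa+\eta)+(\kappa+\eta)^2\big]^{-1}$; since this is well below $K^{-\epsilon_0}$, continuity of $\eta\mapsto|m_W(E+\mathrm{i}\eta)-m_1(E+\mathrm{i}\eta)|$ and a dyadic net in $\eta$ close the bootstrap down to $\eta\to0^+$.

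The main difficulty is uniformity across the whole range $\phi\to\phi_\infty\in[0,\infty]$. The sizes of $f''$, of the critical points $x_k$, of the edge $R$ and of $\rIm m_1$ all carry $\phi$-dependent factors (Remark \ref{rem: regular edges}, Theorem \ref{thm: asym_laws}), so the fluctuation-averaging error and the stability modulus must be tracked through these factors and shown to recombine into the $\phi$-free profile $K^{-1}[(\kappa+\eta)+(\kappa+\eta)^2]^{-1}$; in particular the far-field regime $\kappa+\eta\gtrsim1$ — precisely the one relevant to the spiked-eigenvalue application in Section \ref{sec: Application} — is where the $(\kappa+\eta)^{-2}$ term arises and requires the separate expansion of $m_W$ around $-1/z$ and a careful match with the near-edge estimate. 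A secondary, more routine, point is that the fluctuation-averaging and large-deviation inputs must themselves be run under Assumptions (A1)--(A3) with only the general growth $N^{b}\lesssim M\lesssim N^{a}$ rather than a fixed aspect ratio.
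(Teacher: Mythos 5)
Your proposal takes a much heavier route than the paper, and it has a genuine gap precisely in the regime that makes Theorem \ref{thm: Local law outside the spectrum} nontrivial: arbitrarily small $\eta$. The domain $\mathbf{D}_{os}$ allows $0<\eta<\delta^{-1}(1+\phi^{-1/2})$, in particular $\eta$ far below $K^{-1+c}$, whereas every stochastic input you invoke --- the prior bounds on the event $\Xi$, the large-deviation estimates as used in Section \ref{main proof}, and above all Proposition \ref{prop: fluctuation averaging} --- is established only for $z\in\mathbf{D}$, i.e.\ for $\eta\ge K^{-1+c}$. Your claimed self-consistent-equation error $|\mathcal{E}|\prec(N\eta)^{-1}\bigl(\rIm m_{1o}+|m_{W_o}-m_{1o}|\bigr)+|m_{W_o}-m_{1o}|^{2}$ therefore has no justification below $\mathbf{D}$, and the term $(N\eta)^{-1}|m_{W_o}-m_{1o}|$ in it blows up as $\eta\to0^{+}$ under the induction hypothesis $|m_{W}-m_{1}|\le K^{-\epsilon_0}$, so the "dyadic net plus continuity" step cannot close the bootstrap down to the real axis as stated. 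To salvage your route one would have to re-derive the quadratic-form and averaging estimates with the distance $\kappa$ to the (rigidity-confined) spectrum playing the role of $\eta$, i.e.\ using $\|\mathcal{G}^{(i)}\|\lesssim\kappa^{-1}$ and $\rIm m_W\prec\eta/\sqrt{\kappa+\eta}$ on the good event --- none of which is available off the shelf in the paper.

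The paper's own proof shows this machinery is unnecessary: once Theorem \ref{thm: Rigidity} is in hand (and you already invoke it), one writes
\begin{equation*}
m_{W}(z)-m_{1}(z)=\sum_{k}\sum_{i}\int_{\gamma_{k,i}}^{\gamma_{k,i-1}}\rho(\mathrm{d}x)\Bigl(\frac{1}{\lambda_{k,i}-z}-\frac{1}{x-z}\Bigr),
\end{equation*}
pairs each eigenvalue with its classical location, and uses $|\lambda_{k,i}-\gamma_{k,i}|+|x-\gamma_{k,i}|\prec K^{-2/3}i^{-1/3}$ together with $|R-\gamma_{k,i}|\asymp(i/K)^{2/3}$ and $|z-\gamma_{k,i}|\ge K^{-2/3+\delta}$ to sum up directly to $K^{-1}[(\kappa+\eta)+(\kappa+\eta)^{2}]^{-1}$. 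This comparison is purely deterministic on the rigidity event, valid for every $\eta>0$, and automatically uniform in $\phi$, so the uniformity issue you flag as the "main difficulty" never arises. I recommend replacing the self-consistent bootstrap by this direct Stieltjes-transform comparison; if you insist on the bootstrap route, you must supply the missing small-$\eta$ estimates described above rather than citing the $\mathbf{D}$-restricted lemmas outside their domain of validity.
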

The proof of Theorem \ref{thm: locallaw_phi>1} is deferred to Section \ref{main proof}. The proofs of Theorem \ref{thm: Rigidity} and Theorem \ref{thm: Local law outside the spectrum} are deferred to Section \ref{Th2&3}.

\section{Proof of Theorem \ref{thm: locallaw_phi>1}}\label{main proof}
In this subsection, we give the average local law and entrywise local law for $\mathcal{W}$.

Firstly, we introduce the following control parameters,
\begin{align*}
	&\Lambda\equiv\Lambda(z):=  \max_{i,j\in{\cal I}}|G_{ij}(z)-\delta_{ij}m_{1}(z)|,\quad\widetilde{\Lambda}\equiv\widetilde{\Lambda}(z):=\max_{i,j\in{\cal I},i\ne j}|G_{ij}(z)|,\\
	&\Theta\equiv\Theta(z)  :=  |m_{W}(z)-m_{1}(z)|,
\Psi(\Theta,\phi):=\sqrt{\frac{\operatorname{Im} m_{1}(z)+\Theta}{N\eta}},\quad\Xi:=\{\Lambda\le(\log N)^{-1}\},
\end{align*}
where $\delta_{ij}$ denotes the Kronecker delta, i.e. $\delta_{ij}=1$ if $i=j$, and $\delta_{ij}=0$ if $i\ne j$ and $\Xi$ is a $z$-dependent event. For simplicity of notation, we occasionally omit the variable $z$
for those $z$-dependent quantities provided no ambiguity occurs. Also, it is convenient to define the counterpart of the above parameters for $\mathcal{W}_o$ as $\Lambda_o,\widetilde{\Lambda}_o,\Theta_o,\Psi_o(\Theta_o,\phi)$ and $\Xi_o$. 


\subsection{Case $\phi\gtrsim1$}\label{subsec: case phige1}
We begin with the case where $\phi\gtrsim1$. The first key observation is that from Theorem \ref{thm: asym_laws}, the following estimation holds uniformly on $i,j\in \{1,2,\cdots,N\}$ and $z\in\mathbf{D}$,
\begin{equation}\label{eq_priorbounds_phi>1}
    \{\mathbf{1}(\Xi)+\mathbf{1}(\eta\ge1)\}|G_{ij}^{(\mathcal{T)}}|+\mathbf{1}(\Xi)|(G^{(\mathcal{T})}_{ii})^{-1}|=\rO_{\prec}(1).
\end{equation}
Furthermore, we introduce the $Z$ variable
\begin{gather*}
    (Z_{i})^{(\mathcal{T})}:=(1-\mathbb{E}_i)\big[\mathbf{y}_i^{*}\mathcal{G}^{(i\mathcal{T})}\mathbf{y}_i\big],\quad i\notin\mathcal{T},
\end{gather*}
where $\mathbb{E}_i[\cdot]:=\mathbb{E}[\cdot|\mathcal{W}^{(i)}]$ is the partial expectation over the randomness of the $i$-th row and column of $\mathcal{W}$. It is easy to see from \eqref{ass_X} that,
\begin{equation}
    Z_{i}=\mathbf{y}_i^{*}\mathcal{G}^{(i)}\mathbf{y}_i-\frac{1}{\sqrt{MN}}\operatorname{tr}(\mathcal{G}^{(i)}\Sigma).
\end{equation}

We have the following lemma.
\begin{lemma}\label{lem: weak_locallaw_firstlemma}
    Suppose Assumption \ref{assum_summary} holds. Then uniformly for all $1\le i\le N$ and $z\in\mathbf{D}$, 
    \begin{equation*}
        \{\mathbf{1}(\Xi)+\mathbf{1}(\eta\ge1)\}(|zZ_i|+\Tilde{\Lambda})\prec\Psi(\Theta,\phi).
    \end{equation*}
\end{lemma}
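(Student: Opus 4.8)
The plan is to treat $|zZ_i|$ and $\widetilde{\Lambda}$ in parallel: first reduce both to a single large--deviation estimate, then convert the resulting Frobenius--norm bound into $\Psi(\Theta,\phi)$ via a Ward identity. We work on $\Xi$ (the range $\eta\ge1$ is dealt with separately below) and, in accordance with the ambient subsection, in the regime $\phi\gtrsim1$, so that $K=N$ and Theorem~\ref{thm: asym_laws}(i),(iii) as well as the a priori bounds \eqref{eq_priorbounds_phi>1} apply. By the resolvent identity of Lemma~\ref{lem: basictool_resolvent}, $G_{ij}=zG_{ii}G_{jj}^{(i)}\mathbf{y}_i^{*}\mathcal{G}^{(ij)}\mathbf{y}_j$ for $i\ne j$, with the scalar prefactors $\mathrm{O}_\prec(1)$ on $\Xi$, while $Z_i=\mathbf{y}_i^{*}\mathcal{G}^{(i)}\mathbf{y}_i-(MN)^{-1/2}\operatorname{tr}(\Sigma\mathcal{G}^{(i)})$ is already centred. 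Writing $\mathbf{y}_\ell=\Sigma^{1/2}\mathbf{x}_\ell$, so that these quadratic forms become $\mathbf{x}_i^{*}(\Sigma^{1/2}\mathcal{G}^{(\mathcal{T})}\Sigma^{1/2})\mathbf{x}_j$ with $\mathcal{G}^{(\mathcal{T})}$ independent of $\mathbf{x}_i,\mathbf{x}_j$ and $|\mathcal{T}|\le2$, Lemma~\ref{lem: baisctool_largedevi}(i) gives
\[
|zZ_i|+\widetilde{\Lambda}\ \prec\ \frac{|z|}{\sqrt{MN}}\,\max_{|\mathcal{T}|\le 2}\bigl\|\Sigma^{1/2}\mathcal{G}^{(\mathcal{T})}\Sigma^{1/2}\bigr\|_F ,
\]
so that the lemma reduces to the deterministic bound $\tfrac{|z|^{2}}{MN}\|\Sigma^{1/2}\mathcal{G}^{(\mathcal{T})}\Sigma^{1/2}\|_F^{2}\lesssim\Psi(\Theta,\phi)^{2}$, uniformly for $|\mathcal{T}|\le2$, on $\Xi$.

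For the Frobenius norm I would use the Ward identity: $\mathcal{W}^{(\mathcal{T})}$ being Hermitian gives $\mathcal{G}^{(\mathcal{T})}(\mathcal{G}^{(\mathcal{T})})^{*}=\eta^{-1}\operatorname{Im}\mathcal{G}^{(\mathcal{T})}$, and with $0\preceq\Sigma\preceq C I$ (Assumption (A3)/\eqref{ass_T}) this yields $\|\Sigma^{1/2}\mathcal{G}^{(\mathcal{T})}\Sigma^{1/2}\|_F^{2}\lesssim\eta^{-1}\operatorname{Im}\operatorname{tr}(\Sigma\mathcal{G}^{(\mathcal{T})})$, so the target becomes $|z|^{2}\operatorname{Im}\operatorname{tr}(\Sigma\mathcal{G}^{(\mathcal{T})})\lesssim M(\operatorname{Im}m_1+\Theta)$. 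I would first remove the $\le2$ excluded columns one at a time by the rank--one (Sherman--Morrison) formula: on $\Xi$ deleting a column $j$ changes $\operatorname{tr}(\Sigma\mathcal{G})$ by $-\beta_j\,\mathbf{y}_j^{*}\mathcal{G}'\Sigma\mathcal{G}'\mathbf{y}_j$, where $\mathcal{G}'$ is the already--reduced resolvent, $|\beta_j|=|zG'_{jj}|\asymp|z|$ and, crucially, $\|\mathcal{G}'\mathbf{y}_j\|^{2}=\eta^{-1}\operatorname{Im}(\mathbf{y}_j^{*}\mathcal{G}'\mathbf{y}_j)=-\eta^{-1}\operatorname{Im}(zG'_{jj})^{-1}\lesssim|z|^{-2}+\operatorname{Im}m_1/(\eta|z|)$ since $G'_{jj}\approx m_1\asymp1$ there; hence each such correction is $\lesssim|z|^{-1}+\operatorname{Im}m_1/\eta$, which beats the $\mathrm{O}(\eta^{-1})$ of pure interlacing and is what keeps the argument alive down to $\eta\asymp K^{-1+c}$. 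For the main term I use $\operatorname{Im}\operatorname{tr}(\Sigma\mathcal{G})=\operatorname{tr}(\Sigma\operatorname{Im}\mathcal{G})\le\|\Sigma\|\operatorname{Im}\operatorname{tr}\mathcal{G}=\|\Sigma\|\bigl(N\operatorname{Im}m_W+(M-N)\eta/|z|^{2}\bigr)$ --- valid because the nonzero spectra of $\mathcal{W}$ and $W$ coincide, so $\operatorname{tr}\mathcal{G}=Nm_W-(M-N)/z$ --- together with $\operatorname{Im}m_W\le\operatorname{Im}m_1+\Theta$. Collecting terms, $|z|^{2}\operatorname{Im}\operatorname{tr}(\Sigma\mathcal{G}^{(\mathcal{T})})$ is controlled by a sum of four pieces, $|z|^{2}N(\operatorname{Im}m_1+\Theta)$, $(M-N)\eta$, $|z|$ and $|z|^{2}\operatorname{Im}m_1/\eta$, and each is $\lesssim M(\operatorname{Im}m_1+\Theta)$ once one feeds in the facts available on $\mathbf{D}$ for $\phi\gtrsim1$: $m_1\asymp1$; $|z|^{2}\lesssim\phi=M/N$ (since $E$ is within $c$ of $\operatorname{supp}\rho\subset[0,C\phi^{1/2}]$ and $\eta\lesssim1$); $\eta\lesssim\operatorname{Im}m_1$ and $\operatorname{Im}m_1\gtrsim\eta$ (both from $\kappa+\eta\lesssim1$ combined with the square--root law $\operatorname{Im}m_1\asymp\sqrt{\kappa+\eta}$ or $\eta/\sqrt{\kappa+\eta}$ of Theorem~\ref{thm: asym_laws}(i)); and $\eta\ge K^{-1+c}\gtrsim N^{-1}$. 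Dividing by $MN\eta$ then produces the claimed $\Psi(\Theta,\phi)^{2}$.

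When $\eta\ge1$ the matter is simpler: $\|\mathcal{G}^{(\mathcal{T})}\|\le C$ and $|z|\asymp\phi^{1/2}$, $\operatorname{Im}m_1\asymp1$, and combining the spectral--counting identity for $\mathcal{G}^{(\mathcal{T})}$ against $W^{(\mathcal{T})}$ with the minor bound $|m_W-m_W^{(\mathcal{T})}|\le2(N\eta)^{-1}$ of Lemma~\ref{lem: basictool_diffbounds} gives $\operatorname{Im}\operatorname{tr}(\Sigma\mathcal{G}^{(\mathcal{T})})\lesssim N\eta$, hence $\|\Sigma^{1/2}\mathcal{G}^{(\mathcal{T})}\Sigma^{1/2}\|_F^{2}\lesssim N$ and the left side of the target is $\lesssim|z|^{2}/M\asymp N^{-1}\lesssim\Psi(\Theta,\phi)^{2}$. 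The main obstacle, and the only step that is not routine, is the passage from $\mathcal{G}^{(\mathcal{T})}$ to $\mathcal{G}$ inside $\operatorname{Im}\operatorname{tr}(\Sigma\mathcal{G}^{(\mathcal{T})})$: one must recognise that the naive interlacing bound is too lossy near the real axis and replace it, on $\Xi$, by the sharper $\mathrm{O}(|z|^{-1}+\operatorname{Im}m_1/\eta)$ estimate above, and then track exactly the powers of $\phi$ carried by the rescaling $z=\phi^{-1/2}z_o$, so that the prefactor $|z|^{2}$ and the contribution $(M-N)\eta/|z|^{2}$ of the $M-N$ trivial eigenvalues of $\mathcal{W}$ are both absorbed into $\Psi(\Theta,\phi)$ uniformly over $\mathbf{D}$ and over the full range $\phi\gtrsim1$. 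The analogous statement for $\phi\ll1$ is obtained in the same way, using \eqref{eq_squareroot_phill1} in place of Theorem~\ref{thm: asym_laws}(i).
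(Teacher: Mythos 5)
Your reduction (resolvent identity plus Lemma \ref{lem: baisctool_largedevi} to get $|zZ_i|+\widetilde{\Lambda}\prec\frac{|z|}{\sqrt{MN}}\|\Sigma^{1/2}\mathcal{G}^{(\mathcal{T})}\Sigma^{1/2}\|_F$, the Ward identity, the use of $|z|\asymp\phi^{1/2}$, the square-root behavior from Theorem \ref{thm: asym_laws}, and the separate $\eta\ge1$ treatment via Lemma \ref{lem: basictool_diffbounds}) coincides with the paper's. The gap is exactly at the step you flag as crucial: the claim that on $\Xi$ each rank-one correction obeys $\|\mathcal{G}'\mathbf{y}_j\|^2=\eta^{-1}\operatorname{Im}(\mathbf{y}_j^{*}\mathcal{G}'\mathbf{y}_j)\lesssim |z|^{-2}+\operatorname{Im}m_1/(\eta|z|)$. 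Since $\mathbf{y}_j^{*}\mathcal{G}'\mathbf{y}_j=-1-(zG'_{jj})^{-1}$, this requires $\operatorname{Im}G'_{jj}\lesssim\operatorname{Im}m_1$; but the event $\Xi$ only gives $|G'_{jj}-m_1|\le C(\log N)^{-1}$ (via $\Lambda\le(\log N)^{-1}$ and the third identity of Lemma \ref{lem: basictool_resolvent}), so the honest bound is $\lesssim |z|^{-2}+(\operatorname{Im}m_1+\Lambda)/(\eta|z|)$. Near the lower end of $\mathbf{D}$ with $E$ outside the bulk one has $\operatorname{Im}m_1\asymp\eta/\sqrt{\kappa+\eta}\ll(\log N)^{-1}$, so the $\Lambda/\eta$ piece dominates. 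Carrying it through, your squared target picks up a term of order $\Lambda/(N\eta)^2$, i.e.\ a term of order $\sqrt{\Lambda}/(N\eta)$ after the square root. This is not $\prec\Psi(\Theta,\phi)$ uniformly on $\mathbf{D}$: with $\kappa$ of constant order and $\eta$ near $K^{-1+c}$, $\Psi$ can be as small as $N^{-1/2}$ while $\sqrt{\Lambda}/(N\eta)$ can be of order $N^{-c}(\log N)^{-1/2}$. Nor can it be absorbed self-consistently, because it is sublinear (a square root) in the unknown; an AM--GM split leaves a remainder $(N\eta)^{-2}$, which again is not dominated by $\Psi$ down to $\eta\asymp K^{-1+c}$. (A bound of the form $\Psi+(N\eta)^{-1}$ is what the final Theorem \ref{thm: locallaw_phi>1} allows, but it is not what this lemma asserts, and your argument as written claims the sharp form from an unjustified estimate.)

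The paper avoids this because it never applies the Ward identity to the rank-one update: it compares $\operatorname{tr}G^{(ij)}$ with $\operatorname{tr}G$ entrywise, using $|G^{(ij)}_{kk}-G_{kk}|\le C\widetilde{\Lambda}^2$ on $\Xi$, so the minor-trace difference is at most $CN\widetilde{\Lambda}^2+2\operatorname{Im}m_1+2(\log N)^{-1}$ with no $\eta^{-1}$ loss; see \eqref{eq_entrywise_diffG}--\eqref{eq_entrywise_boundGF}. The resulting error enters the squared bound as $\widetilde{\Lambda}^2/(N\eta)$, i.e.\ as $\widetilde{\Lambda}/\sqrt{N\eta}$ after the square root, which is linear in $\widetilde{\Lambda}$ with a small coefficient and is absorbed as in \eqref{eq_entrywise_boundLambda}; the $(\log N)^{-1}$ and $(M-N)$ contributions appear without an extra $\eta^{-1}$ and are dominated by $\operatorname{Im}m_1/(N\eta)$ since $\eta\lesssim\operatorname{Im}m_1$ on $\mathbf{D}$. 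If you replace your Ward-identity treatment of the minor-to-full comparison by this entrywise comparison (and, for $Z_i$, feed in the already-established $\widetilde{\Lambda}\prec\Psi$), your proof becomes the paper's; your $\eta\ge1$ paragraph is fine, and the closing remark on $\phi\ll1$ is only a presentational difference (the paper handles that case by rescaling to $\mathcal{W}_o$ in the next subsection).
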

\begin{proof}
The proof is similar to the one in \cite{knowles2017anisotropic} or \cite{Wen2021}; we focus on the main difference here. Let $\mathcal{I}=\{1,2,\dots, N\}$. First observe that the resolvent identities and Lemma \ref{lem: baisctool_largedevi} give that uniformly for $z\in\mathbf{D}$ and $i,j\in\mathcal{I}$ with $i\neq j$,
\begin{gather}\label{eq_entrywise_eq1}
    \mathbf{1}(\Xi)|G_{ij}|\le\mathbf{1}(\Xi)|z||G_{ii}G_{jj}^{(i)}||\mathbf{y}_i^{*}G^{(ij)}\mathbf{y}_j|\prec\mathbf{1}(\Xi)|z||G_{ii}G_{jj}^{(i)}|\frac{1}{\sqrt{MN}}\|\Sigma\|\|\mathcal{G}^{(ij)}\|_F.
\end{gather}
Using Lemma \ref{lem: basictool_resolvent}, \eqref{eq_priorbounds_phi>1} and event $\Xi$, we obtain the difference
\begin{gather*}
    \begin{split}
       \mathbf{1}(\Xi)|G_{kk}^{(ij)}-G_{kk}|\le \mathbf{1}(\Xi)C\widetilde{\Lambda}^2,
    \end{split}
\end{gather*}
which gives that 
\begin{gather}\label{eq_entrywise_diffG}
\begin{split}
\mathbf{1}(\Xi)|\operatorname{Im}{\rm tr}G^{(ij)}-\operatorname{Im}{\rm tr}G|&\le\mathbf{1}(\Xi)\Big|\sum_{k\in{\cal I}\setminus\{i,j\}}(G_{kk}^{(ij)}-G_{kk})\Big|+\mathbf{1}(\Xi)\Big|\operatorname{Im} G_{ii}+\operatorname{Im} G_{jj}\Big|\\
&\le \mathbf{1}(\Xi)CN\widetilde{\Lambda}^{2}+\mathbf{1}(\Xi)2\operatorname{Im} m_1(z)+\frac{2}{\log N}.
\end{split}
\end{gather}
Recall the following relationship
\begin{equation}
{\rm tr}{\cal G}^{(ij)}=\frac{(N-2-M)}{z}+{\rm tr}G^{(ij)}.
\end{equation}
Applying Lemma \ref{lem: basictool_resolvent}, we have
\begin{equation}
    \mathbf{1}(\Xi)\frac{\|\mathcal{G}^{(ij)}\|^2_F}{MN}=\mathbf{1}(\Xi)\frac{\operatorname{Im}{\rm tr}\mathcal{G}^{(ij)}}{\eta MN}=\mathbf{1}(\Xi)\Big(\frac{\operatorname{Im}{\rm tr}G^{(ij)}}{\eta MN}-\frac{N-2-M}{|z|^2MN}\Big).
\end{equation}
Then by \eqref{eq_entrywise_diffG} and Theorem \ref{thm: asym_laws}, we have
\begin{equation}\label{eq_entrywise_boundGF}
    \mathbf{1}(\Xi)\frac{\|\mathcal{G}^{(ij)}\|^2_F}{MN}\le \mathbf{1}(\Xi)\Big(\frac{\operatorname{Im}m_1(z)+\Theta+\widetilde{\Lambda}^2}{\eta M}+\frac{M-N}{|z|^2MN}\Big).
\end{equation}
Using \eqref{eq_priorbounds_phi>1}, we conclude from \eqref{eq_entrywise_eq1} that 
\begin{equation}
\begin{split}
    \mathbf{1}(\Xi)|G_{ij}|&\prec \mathbf{1}(\Xi)|z|\Big(\frac{\operatorname{Im}m_1(z)+\Theta+\widetilde{\Lambda}^2}{\eta M}+\frac{M-N}{|z|^2MN}\Big)^{1/2}\\
    &\prec\mathbf{1}(\Xi)\Big(\frac{\operatorname{Im}m_1(z)+\Theta+\widetilde{\Lambda}^2}{\eta N}+\frac{M-N}{MN}\Big)^{1/2}\\
    &\prec \mathbf{1}(\Xi)\Big(\frac{\operatorname{Im}m_1(z)+\Theta+\widetilde{\Lambda}^2}{\eta N}\Big)^{1/2},
\end{split}
\end{equation}
where we used the fact $|z|\asymp\phi^{1/2}$ in the second step. Therefore, by the definition of $\widetilde{\Lambda}$,
\begin{equation}\label{eq_entrywise_boundLambda}
    \mathbf{1}(\Xi)|\widetilde{\Lambda}|\prec\mathbf{1}(\Xi)\big(\frac{\operatorname{Im}m_1(z)+\Theta}{\eta N}\big)^{1/2}+\frac{\widetilde{\Lambda}}{(\eta N)^{1/2}}\quad \Rightarrow\quad\mathbf{1}(\Xi)|\widetilde{\Lambda}|\prec\mathbf{1}(\Xi)\Psi.
\end{equation}

Now, we evaluate the bound of $Z_i$. It follows from Lemmas \ref{lem: baisctool_largedevi}, \ref{lem: basictool_resolvent} and \eqref{eq_entrywise_boundGF} that 
\begin{gather}
    \begin{split}
        \mathbf{1}(\Xi)zZ_i&=\mathbf{1}(\Xi)z\Big(\mathbf{y}_i^{*}\mathcal{G}^{(i)}\mathbf{y}_i-\frac{1}{\sqrt{MN}}\operatorname{tr}(\mathcal{G}^{(i)}\Sigma)\Big)\prec\mathbf{1}(\Xi)\frac{z}{\sqrt{MN}}\|\Sigma\|\|\mathcal{G}^{(i)}\|_F\\
        &\prec z\Big(\frac{\operatorname{Im}m_1(z)+\Theta+\widetilde{\Lambda}^2}{\eta M}+\frac{M-N}{|z|^2MN}\Big)^{1/2}\prec \Big(\frac{\operatorname{Im}m_1(z)+\Theta}{\eta N}\Big)^{1/2},
    \end{split}
\end{gather}
where we used the fact $M\gtrsim N$ and \eqref{eq_entrywise_boundLambda}. The argument for $\mathbf{1}(\eta>1)$ is similar to those in $\mathbf{1}(\Xi)$, the only difference is that we will use Lemma \ref{lem: basictool_diffbounds} rather than $\widetilde{\Lambda}$ to control the difference $|\operatorname{tr}G^{(i)}-\operatorname{tr}G|$. One may refer to the details in \cite{Wen2021}.
\end{proof}

The next step is to show the following lemma,
\begin{lemma}\label{lem: weak_locallaw_seclemma}
    Suppose Assumption \ref{assum_summary} holds. Then uniformly for all $1\le i\le N$ and $z\in\mathbf{D}$, 
    \begin{equation*}
        \{\mathbf{1}(\Xi)+\mathbf{1}(\eta\ge1)\}\Big(\frac{1}{\sqrt{MN}}\operatorname{tr}(\mathcal{G}^{(i)}\Sigma)-\frac{1}{\sqrt{MN}}\operatorname{tr}\big((-z\phi^{-1/2}m_W\Sigma-zI)^{-1}\Sigma\big)\Big)\prec\Psi(\Theta,\phi).
    \end{equation*}
\end{lemma}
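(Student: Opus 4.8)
The plan is to reduce the statement to a self-consistent identity for the full resolvent $\mathcal{G}$, whose natural deterministic counterpart is $\Pi:=(-z\phi^{-1/2}m_W\Sigma-zI)^{-1}$, and then to estimate the resulting error terms with the large-deviation bounds of Lemma \ref{lem: baisctool_largedevi} and rank-one perturbation estimates, in the spirit of the anisotropic local law proofs in \cite{knowles2017anisotropic,Wen2021}. Throughout I work on the event $\Xi$, where \eqref{eq_priorbounds_phi>1} supplies $|G_{ij}|\prec1$, $|(G_{ii})^{-1}|\prec1$, and, with $\beta_j:=(1+\mathbf{y}_j^*\mathcal{G}^{(j)}\mathbf{y}_j)^{-1}=-zG_{jj}$, also $|\beta_j|\prec1$, while the definition of $\Xi$ forces $\Theta=|m_W-m_1|\le(\log N)^{-1}$; the range $\eta\ge1$ is treated identically with Lemma \ref{lem: basictool_diffbounds} replacing the $\widetilde{\Lambda}$-comparisons, exactly as in Lemma \ref{lem: weak_locallaw_firstlemma}. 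One also records that $\|\Pi\|\prec1$ on $\Xi$ (via $m_W\approx m_1$ and Theorem \ref{thm: asym_laws}) and that $\|\mathcal{G}\|_F^2=\eta^{-1}\operatorname{Im}\operatorname{tr}\mathcal{G}$ is controlled through the identity $\operatorname{Im}\operatorname{tr}\mathcal{G}=\operatorname{Im}\operatorname{tr}G+(N-M)\operatorname{Im}(z^{-1})$ together with $\operatorname{Im}m_W\le\operatorname{Im}m_1+\Theta$.

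I first pass from $\mathcal{G}^{(i)}$ to $\mathcal{G}$: the matrix formula gives $\operatorname{tr}((\mathcal{G}^{(i)}-\mathcal{G})\Sigma)=\beta_i\,\mathbf{y}_i^*\mathcal{G}^{(i)}\Sigma\mathcal{G}^{(i)}\mathbf{y}_i$, and Lemma \ref{lem: baisctool_largedevi} with $\|\mathcal{G}^{(i)}\|\le C/\eta$ and the $\|\mathcal{G}^{(i)}\|_F$ estimate above yields $(MN)^{-1/2}|\operatorname{tr}((\mathcal{G}^{(i)}-\mathcal{G})\Sigma)|\prec\Psi$, so it suffices to bound $(MN)^{-1/2}\operatorname{tr}((\mathcal{G}-\Pi)\Sigma)$. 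Using $\mathcal{G}^{-1}=\sum_j\mathbf{y}_j\mathbf{y}_j^*-zI$ and the resolvent expansion $\mathcal{G}-\Pi=\Pi(\Pi^{-1}-\mathcal{G}^{-1})\mathcal{G}$, then substituting $\mathbf{y}_j^*\mathcal{G}=\beta_j\mathbf{y}_j^*\mathcal{G}^{(j)}$, $\beta_j=-zG_{jj}$ and $\phi^{-1/2}m_W=N^{-1}\phi^{-1/2}\sum_jG_{jj}$ and taking the trace against $\Sigma$, one obtains
\[
\operatorname{tr}((\mathcal{G}-\Pi)\Sigma)=z\sum_{j=1}^NG_{jj}\Bigl(\mathbf{y}_j^*\mathcal{G}^{(j)}\Sigma\Pi\mathbf{y}_j-\phi^{-1/2}N^{-1}\operatorname{tr}(\mathcal{G}\Sigma\Pi\Sigma)\Bigr).
\]
The crucial algebraic fact is $(MN)^{-1/2}=\phi^{-1/2}N^{-1}$ (both equal $M^{-1/2}N^{-1/2}$), which makes the two deterministic pieces coincide: after multiplying by $(MN)^{-1/2}$, the bracket becomes $\mathcal{Z}_j+\mathcal{R}_j$, with $\mathcal{Z}_j:=\mathbf{y}_j^*\mathcal{G}^{(j)}\Sigma\Pi\mathbf{y}_j-(MN)^{-1/2}\operatorname{tr}(\mathcal{G}^{(j)}\Sigma\Pi\Sigma)$ a quadratic-form fluctuation and $\mathcal{R}_j:=(MN)^{-1/2}\operatorname{tr}((\mathcal{G}^{(j)}-\mathcal{G})\Sigma\Pi\Sigma)$ a rank-one correction.

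The error terms are then controlled as in the earlier lemmas. By Lemma \ref{lem: baisctool_largedevi}, $|\mathcal{Z}_j|\prec(MN)^{-1/2}\|\mathcal{G}^{(j)}\|_F$ after absorbing $\|\Sigma\|,\|\Pi\|\prec1$, which the $\|\mathcal{G}^{(j)}\|_F$ bound upgrades to $|\mathcal{Z}_j|\prec\phi^{-1/2}\Psi+(MN)^{-1/2}\eta^{-1}$, while $\mathcal{R}_j$ is of the smaller order $\Psi^2$ by the Sherman--Morrison estimate used in the reduction step. Thus everything comes down to estimating $z(MN)^{-1/2}\sum_jG_{jj}(\mathcal{Z}_j+\mathcal{R}_j)$.

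This last summation is the main obstacle. A crude termwise bound carries an extra factor $\sqrt{N/M}\,|z|$, which is $\mathrm{O}(1)$ when $\phi\gtrsim1$ but off by a power of $\phi$ when $\phi\ll1$, so one must exploit that $\mathcal{Z}_j$ is, up to negligible corrections, a $(1-\mathbb{E}_j)$-centred quadratic form. To make this rigorous I would first freeze the $j$-th column, replacing $\Pi$ by $\Pi^{(j)}:=(-z\phi^{-1/2}m_W^{(j)}\Sigma-zI)^{-1}$ — the error controlled by the interlacing bound $|m_W-m_W^{(j)}|\le(N\eta)^{-1}$ of Lemma \ref{lem: basictool_diffbounds} — and isolate the dependence of $G_{jj}$ on $\mathbf{y}_j$ (as in the expansion $\beta_j=b_j-b_j\beta_j\gamma_j$ from the proof of Lemma \ref{th-all}), so that the principal part becomes a sum of genuinely centred terms. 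That sum is then bounded by a martingale/large-deviation estimate for sums of weakly dependent mean-zero variables, exactly as in Step~1 of the proof of Lemma \ref{th-all} and in the fluctuation-averaging step of \cite{knowles2017anisotropic}, which recovers the missing power $N^{-1/2}$ (in fact a full order) and brings the whole contribution down to $\Psi$, uniformly in $z\in\mathbf{D}$ and in both regimes $\phi\gtrsim1$ and $\phi\ll1$. The only further care required is bookkeeping of the $\phi$-dependent sizes of $|z|$, $\|\Pi\|$ and $\operatorname{Im}\operatorname{tr}\mathcal{G}$ throughout, which is routine.
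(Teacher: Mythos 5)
Your core argument is the paper's argument: you split off the rank-one difference $\operatorname{tr}((\mathcal{G}^{(i)}-\mathcal{G})\Sigma)=zG_{ii}\,\mathbf{y}_i^*\mathcal{G}^{(i)}\Sigma\mathcal{G}^{(i)}\mathbf{y}_i$ and bound it by large deviations plus the Frobenius-norm estimate \eqref{eq_entrywise_boundGF}, then compare $\mathcal{G}$ with $\Pi=(-z\phi^{-1/2}m_W\Sigma-zI)^{-1}$ through the identity $\mathcal{G}-\Pi=z\sum_j G_{jj}\Pi\big(\mathbf{y}_j\mathbf{y}_j^*\mathcal{G}^{(j)}-\tfrac{1}{\sqrt{MN}}\Sigma\mathcal{G}\big)$, control $\|\Pi\|$ via Theorem \ref{thm: asym_laws}(iii) together with $\Theta\le(\log N)^{-1}$ on $\Xi$, and bound the resulting quadratic-form fluctuations and Sherman--Morrison corrections termwise — exactly the chain in \eqref{eq_locallaw_quadraticform1}--\eqref{eq_locallaw_est_inverseoperatornorm}. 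The genuine divergence is your treatment of $\phi\ll1$: this lemma belongs to Subsection \ref{subsec: case phige1}, i.e.\ to the regime $\phi\gtrsim1$, where (as you yourself observe) the crude termwise bound already closes the estimate because $|z|\sqrt{N/M}=\mathrm{O}(1)$; the paper never proves the statement in this normalization for $\phi\ll1$, but instead rescales to $\mathcal{W}_o$, $z_o=\phi^{1/2}z$, with the $1/N$-normalized trace, and reruns the same crude argument there (Lemma \ref{lem: weak_seclemma_phi<1}). Your alternative — keeping the $(MN)^{-1/2}$ normalization and recovering the lost power of $\phi$ by freezing $\Pi\to\Pi^{(j)}$ and fluctuation-averaging the centred terms — is plausible but considerably heavier, and as sketched it would need an indicator-restricted version of Proposition \ref{prop: fluctuation averaging}: at the weak-law stage the hypothesis $\Theta\prec(N\eta)^{-\nu}$ is not yet available, only the event $\Xi$, so the averaging estimate cannot be quoted as stated. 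In short, for the regime the lemma is actually used in, your proof is the paper's proof (with the added, welcome, care of replacing $m_W$ by $m_W^{(j)}$ before applying Lemma \ref{lem: baisctool_largedevi}, a point the paper glosses over); the extra machinery you propose for $\phi\ll1$ is not needed because the paper buys that case much more cheaply by rescaling.
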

\begin{proof}
    We first decompose the difference into two terms,
    \begin{equation*}
       \operatorname{tr}(\mathcal{G}^{(i)}\Sigma)- \operatorname{tr}\big((-z\phi^{-1/2}m_W\Sigma-zI)^{-1}\Sigma\big)=\operatorname{tr}(\mathcal{G}^{(i)}\Sigma)-\operatorname{tr}(\mathcal{G}\Sigma)+ \operatorname{tr}(\mathcal{G}\Sigma)- \operatorname{tr}\big((-z\phi^{-1/2}m_W\Sigma-zI)^{-1}\Sigma\big).
    \end{equation*}
    Observe that for any $i\in\mathcal{I}$,
    \begin{equation}
        \begin{split}
            \mathbf{1}(\Xi)\frac{1}{\sqrt{MN}}|\operatorname{tr}(\mathcal{G}^{(i)}\Sigma)-\operatorname{tr}(\mathcal{G}\Sigma)|&=\mathbf{1}(\Xi)\frac{1}{\sqrt{MN}}|\operatorname{tr}\big((\mathcal{G}^{(i)}-\mathcal{G})\Sigma\big)|\\
            &=\mathbf{1}(\Xi)\frac{1}{\sqrt{MN}}\Big|\frac{\mathbf{y}_i^{*}\mathcal{G}^{(i)}\Sigma\mathcal{G}^{(i)}\mathbf{y}_i}{1+\mathbf{y}_{i}^{*}\mathcal{G}^{(i)}\mathbf{y}_i}\Big|\\
            &=\mathbf{1}(\Xi)\frac{1}{\sqrt{MN}}|zG_{ii}\mathbf{y}_i^{*}\mathcal{G}^{(i)}\Sigma\mathcal{G}^{(i)}\mathbf{y}_i|\\
            &\prec\mathbf{1}(\Xi)\frac{1}{\sqrt{MN}}|zG_{ii}|\big(\frac{1}{\sqrt{MN}}\operatorname{tr}(\mathcal{G}^{(i)}\Sigma\mathcal{G}^{(i)}\Sigma)+\frac{1}{\sqrt{MN}}\|\mathcal{G}^{(i)}\Sigma\mathcal{G}^{(i)}\Sigma\|_F\big)\\
            &\prec \mathbf{1}(\Xi)\frac{1}{MN}|zG_{ii}|\|\mathcal{G}^{(i)}\Sigma\|^2_F\\
            &\le \mathbf{1}(\Xi)\Big(\frac{\operatorname{Im}m_1(z)+\Theta+\widetilde{\Lambda}^2}{\eta \sqrt{MN}}\Big),
        \end{split}
    \end{equation}
where we used Lemmas \ref{lem: baisctool_largedevi}, \ref{lem: basictool_resolvent}, \eqref{eq_priorbounds_phi>1} and Theorem \ref{thm: asym_laws}.

Similarly, using the following identity
\begin{equation}
    {\cal G}-(-z\phi^{-1/2}m_{W}\Sigma-zI)^{-1}=\sum_{i\in{\cal I}}\frac{(\phi^{-1/2}m_{W}\Sigma+I)^{-1}}{z(1+\mathbf{y}_{i}^{*}{\cal G}^{(i)}\mathbf{y}_{i})}(\mathbf{y}_{i}\mathbf{y}_{i}^{*}{\cal G}^{(i)}-\frac{1}{\sqrt{MN}}\Sigma{\cal G}),
\end{equation}
one may easily obtain that
\begin{equation}\label{eq_locallaw_quadraticform1}
    \begin{split}
        &\mathbf{1}(\Xi)|\frac{1}{\sqrt{MN}}\big(\operatorname{tr}(\mathcal{G}\Sigma)- \operatorname{tr}(-z\phi^{-1/2}m_W\Sigma-zI)^{-1}\Sigma)\big)|\\
        &=\mathbf{1}(\Xi)\Big|\frac{1}{\sqrt{MN}}\operatorname{tr}\Big(\sum_{i\in\mathcal{I}}\frac{(\phi^{-1/2}m_W\Sigma+I)^{-1}}{z(1+\mathbf{y}_i^{*}\mathcal{G}^{(i)}\mathbf{y}_i)}(\mathbf{y}_i\mathbf{y}_i^{*}\mathcal{G}^{(i)}\Sigma-\frac{1}{\sqrt{MN}}\Sigma\mathcal{G}^{(i)}\Sigma+\frac{1}{\sqrt{MN}}\Sigma\mathcal{G}^{(i)}\Sigma-\frac{1}{\sqrt{MN}}\Sigma\mathcal{G}\Sigma\Big)\Big|\\
        &\prec\mathbf{1}(\Xi)\frac{1}{\sqrt{MN}}\sum_{i\in\mathcal{I}}\frac{1}{\sqrt{MN}}\|\Sigma\|\|(\phi^{-1/2}m_W\Sigma+I)^{-1}\Sigma\mathcal{G}^{(i)}\|_F+\big(\frac{\operatorname{Im}m_1(z)+\Theta+\widetilde{\Lambda}^2}{\eta \sqrt{MN}}\big)\\
        &\prec\mathbf{1}(\Xi)\frac{1}{\sqrt{MN}}\sum_{i\in\mathcal{I}}\frac{1}{\sqrt{MN}}\|(\phi^{-1/2}m_W\Sigma+I)^{-1}\|\|\mathcal{G}^{(i)}\Sigma\|_F+\big(\frac{\operatorname{Im}m_1(z)+\Theta+\widetilde{\Lambda}^2}{\eta \sqrt{MN}}\big).
    \end{split}
\end{equation}
From statement $(iii)$ of Theorem \ref{thm: asym_laws}, we have 
\begin{equation}\label{eq_locallaw_est_inverseoperatornorm}
    \mathbf{1}(\Xi)|\phi^{-1/2}m_W\sigma_i+1|\ge\mathbf{1}(\Xi)(|\phi^{-1/2}m_1\sigma_i+1|-|\phi^{-1/2}(m_1-m_W)\sigma_i|)\ge\tau^{\prime}>0.
\end{equation}
Plugging \eqref{eq_locallaw_est_inverseoperatornorm} into \eqref{eq_locallaw_quadraticform1}, we obtain that
\begin{equation}
\begin{split}
    &\mathbf{1}(\Xi)|\frac{1}{\sqrt{MN}}\big(\operatorname{tr}(\mathcal{G}\Sigma)- \operatorname{tr}(-z\phi^{-1/2}m_W\Sigma-zI)^{-1}\Sigma)\big)|\prec\frac{1}{MN}\sum_{i\in\mathcal{I}}\|\mathcal{G}^{(i)}\Sigma\|_F+\big(\frac{\operatorname{Im}m_1(z)+\Theta+\widetilde{\Lambda}^2}{\eta \sqrt{MN}}\big)\\
    &\prec \phi^{-1/2}\big(\frac{\operatorname{Im}m_1(z)+\Theta+\widetilde{\Lambda}^2}{\eta M}\big)^{1/2}+\big(\frac{\operatorname{Im}m_1(z)+\Theta+\widetilde{\Lambda}^2}{\eta \sqrt{MN}}\big),
\end{split}
\end{equation}
where we also used the estimation for $\|\mathcal{G}^{(i)}\|_F$ as in \eqref{eq_entrywise_boundGF}. Combining the above estimations, we could conclude the results for $\mathbf{1}(\Xi)$.
For the case $\mathbf{1}(\eta>1)$, the procedures are the same, so we omit further details here.
\end{proof}

With the above results, we can further prove the next lemma.
\begin{lemma}\label{lem: weak_locallaw_thdlemma}
    Suppose Assumption \ref{assum_summary} holds. Then uniformly for all $1\le i\le N$ and $z\in\mathbf{D}$, 
    \begin{equation*}
        \{\mathbf{1}(\Xi)+\mathbf{1}(\eta\ge1)\}|G_{ii}-G_{jj}|\prec\Psi(\Theta,\phi).
    \end{equation*}
\end{lemma}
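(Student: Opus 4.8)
The plan is to start from the Schur-complement identity of Lemma~\ref{lem: basictool_resolvent}: for every index $i$ it gives $G_{ii}=-\big(z+z\mathbf{y}_i^{*}\mathcal{G}^{(i)}\mathbf{y}_i\big)^{-1}$, so that, writing $\mathbf{y}_i^{*}\mathcal{G}^{(i)}\mathbf{y}_i=Z_i+(MN)^{-1/2}\operatorname{tr}(\mathcal{G}^{(i)}\Sigma)$,
\begin{equation*}
\frac{1}{G_{ii}}-\frac{1}{G_{jj}}=-z\big(\mathbf{y}_i^{*}\mathcal{G}^{(i)}\mathbf{y}_i-\mathbf{y}_j^{*}\mathcal{G}^{(j)}\mathbf{y}_j\big)=-z(Z_i-Z_j)-\frac{z}{\sqrt{MN}}\operatorname{tr}\big((\mathcal{G}^{(i)}-\mathcal{G}^{(j)})\Sigma\big).
\end{equation*}
On the event $\Xi$ (and when $\eta\ge1$) the prior bound \eqref{eq_priorbounds_phi>1} gives $|G_{ii}|,|G_{jj}|=\mathrm{O}_{\prec}(1)$, while $G_{ii}-G_{jj}=-G_{ii}G_{jj}\big(G_{ii}^{-1}-G_{jj}^{-1}\big)$; so it suffices to show that the two terms on the right above are $\mathrm{O}_{\prec}(\Psi)$.

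The $Z$-term is immediate: Lemma~\ref{lem: weak_locallaw_firstlemma} already bounds $\{\mathbf{1}(\Xi)+\mathbf{1}(\eta\ge1)\}|zZ_i|\prec\Psi$ uniformly in $i$, so $z(Z_i-Z_j)\prec\Psi$. For the trace term I split $\operatorname{tr}((\mathcal{G}^{(i)}-\mathcal{G}^{(j)})\Sigma)=\operatorname{tr}((\mathcal{G}^{(i)}-\mathcal{G})\Sigma)-\operatorname{tr}((\mathcal{G}^{(j)}-\mathcal{G})\Sigma)$ and bound each piece through the rank-one identity $\operatorname{tr}((\mathcal{G}^{(k)}-\mathcal{G})\Sigma)=-zG_{kk}\mathbf{y}_k^{*}\mathcal{G}^{(k)}\Sigma\mathcal{G}^{(k)}\mathbf{y}_k$; Lemma~\ref{lem: baisctool_largedevi} together with the Frobenius estimate \eqref{eq_entrywise_boundGF} then gives, exactly as in the first display of the proof of Lemma~\ref{lem: weak_locallaw_seclemma},
\begin{equation*}
\frac{1}{\sqrt{MN}}\big|\operatorname{tr}\big((\mathcal{G}^{(k)}-\mathcal{G})\Sigma\big)\big|\prec\frac{\operatorname{Im}m_1+\Theta+\widetilde{\Lambda}^{2}}{\eta\sqrt{MN}},\qquad k\in\{i,j\}.
\end{equation*}
The essential point---and where the argument departs from the classical $\phi\asymp1$ setting---is that $|z|$ need not be bounded: for $\phi\gtrsim1$ the nontrivial eigenvalues of $W$ live at scale $\phi^{-1/2}R\asymp\phi^{1/2}$ (Remark~\ref{rem: regular edges}, Theorem~\ref{thm: asym_laws}), so that $|z|\asymp\phi^{1/2}\asymp\sqrt{M/N}$ on $\mathbf{D}$---as already used in the proof of Lemma~\ref{lem: weak_locallaw_firstlemma}---and therefore $|z|/(\eta\sqrt{MN})\asymp1/(N\eta)$. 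Multiplying the last display by $|z|$ gives
\begin{equation*}
\frac{|z|}{\sqrt{MN}}\big|\operatorname{tr}\big((\mathcal{G}^{(i)}-\mathcal{G}^{(j)})\Sigma\big)\big|\prec\frac{\operatorname{Im}m_1+\Theta+\widetilde{\Lambda}^{2}}{N\eta}=\Psi^{2}+\frac{\widetilde{\Lambda}^{2}}{N\eta}\prec\Psi,
\end{equation*}
where the last step uses $\widetilde{\Lambda}\prec\Psi$ (Lemma~\ref{lem: weak_locallaw_firstlemma}) and $\Psi\le1$ on $\Xi$ for $N$ large. Combining the two estimates yields $|G_{ii}^{-1}-G_{jj}^{-1}|\prec\Psi$, hence $|G_{ii}-G_{jj}|\prec\Psi$.

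I expect the only genuine subtlety to be this interplay of the explicit $z$-prefactors with the normalisations $(MN)^{-1/2}$ and $\eta^{-1}$: one cannot simply invoke Lemma~\ref{lem: weak_locallaw_seclemma} to get $(MN)^{-1/2}|\operatorname{tr}((\mathcal{G}^{(i)}-\mathcal{G}^{(j)})\Sigma)|\prec\Psi$, since multiplying such a bound by $|z|\asymp\sqrt{M/N}$ overshoots $\Psi$; one must instead retain the sharper bound $\prec(\operatorname{Im}m_1+\Theta+\widetilde{\Lambda}^{2})/(\eta\sqrt{MN})$, whose product with $|z|$ collapses precisely to the $1/(N\eta)$ scale of $\Psi^{2}$. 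The remaining points are routine and parallel the earlier lemmas: the regime $\eta\ge1$ is handled in the same way, with the $\widetilde{\Lambda}$-based control of the resolvent differences replaced by the deterministic estimates of Lemma~\ref{lem: basictool_diffbounds}; and averaging the resulting bound over $j$ gives at once $|G_{ii}-m_W|\prec\Psi$, which is the form fed into the self-consistent equation at the next step.
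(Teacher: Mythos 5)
Your proposal is correct and follows essentially the same route as the paper: the Schur-complement identity from Lemma \ref{lem: basictool_resolvent}, the prior bound \eqref{eq_priorbounds_phi>1} to control $|G_{ii}G_{jj}|$, Lemma \ref{lem: weak_locallaw_firstlemma} for the $z(Z_i-Z_j)$ term, and the intermediate Frobenius-norm estimate from the proof of Lemma \ref{lem: weak_locallaw_seclemma} (rather than its statement) combined with $|z|\asymp\phi^{1/2}$ to get the trace-difference term down to $\mathrm{O}_\prec(\Psi^2)$. The paper compresses this into the single chain $\prec\Psi+\Psi^2\prec\Psi$; your write-up just makes the $|z|$-versus-$(MN)^{-1/2}$ bookkeeping explicit.
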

\begin{proof}
    One may observe from Lemma \ref{lem: basictool_resolvent} that 
    \begin{equation}
        \begin{split}
            |G_{ii}-G_{jj}|&=\Big| G_{ii}G_{jj}\Big(\frac{1}{G_{jj}}-\frac{1}{G_{ii}}\Big)\Big|\\
            &\le|zG_{ii}G_{jj}||Z_i-Z_j|+\Big|zG_{ii}G_{jj}\Big(\frac{1}{\sqrt{MN}}\operatorname{tr}(\mathcal{G}^{(i)}\Sigma)-\frac{1}{\sqrt{MN}}\operatorname{tr}(\mathcal{G}^{(j)}\Sigma)\Big)\Big|\\
            &\prec |z||Z_i-Z_j|+|z|\big|\frac{1}{\sqrt{MN}}\operatorname{tr}(\mathcal{G}^{(i)}\Sigma-\mathcal{G}^{(j)}\Sigma)\big|\\
            &\prec\Psi+\Psi^2\prec \Psi.
        \end{split}
    \end{equation}
\end{proof}

Now, we are ready to obtain our first result, the weak local law.
\begin{lemma}[Weak local law]\label{lem: weak local law}
    Suppose Assumption \ref{assum_summary} holds. Then we have $\Lambda(z)\prec (N\eta)^{-1/4}$ uniformly for $z\in\mathbf{D}$. 
\end{lemma}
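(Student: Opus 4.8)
The plan is to derive an approximate self-consistent equation for $m_W$, analyse its stability near $m_1$, and then remove the a~priori event $\Xi$ by a continuity bootstrap in the spectral resolution $\eta$; throughout we stay in the regime $\phi\gtrsim1$ of this subsection (the $\phi\ll1$ case runs in parallel using the $\phi^{3/4}$-scalings of Theorem~\ref{thm: asym_laws}). First I would set up the self-consistent equation: on the event $\Xi$, and separately when $\eta\ge1$, combining the Schur-complement identity $G_{ii}^{-1}=-z-z\mathbf{y}_i^{*}\mathcal{G}^{(i)}\mathbf{y}_i$ of Lemma~\ref{lem: basictool_resolvent}, the concentration bound $|zZ_i|\prec\Psi$ of Lemma~\ref{lem: weak_locallaw_firstlemma}, and the deterministic trace approximation of Lemma~\ref{lem: weak_locallaw_seclemma} yields
$$
\{\mathbf{1}(\Xi)+\mathbf{1}(\eta\ge1)\}\Big(\frac{1}{G_{ii}}+z+\frac{z}{\sqrt{MN}}\operatorname{tr}\big((-z\phi^{-1/2}m_W\Sigma-zI)^{-1}\Sigma\big)\Big)\prec\Psi(\Theta,\phi)
$$
uniformly in $i$. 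Since the right-hand side does not depend on $i$, averaging over $i$, using $m_W=N^{-1}\sum_iG_{ii}$ and the entrywise closeness $|G_{ii}-G_{jj}|\prec\Psi$ from Lemma~\ref{lem: weak_locallaw_thdlemma}, I obtain a closed perturbed equation $\mathcal{E}(m_W,z)=\mathrm{O}_\prec(\Psi)$, where $\mathcal{E}(\,\cdot\,,z)=0$ is precisely the defining equation of $m_1$ in Remark~\ref{mp1remark}.

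The stability step is next. Passing to the rescaled variables $z_o$, $m_{1o}=\phi^{-1/2}m_1$, $m_{Wo}=\phi^{-1/2}m_W$, the perturbed equation reads $f(m_{Wo})-f(m_{1o})=\mathrm{O}_\prec(\widetilde\Psi)$, with $\widetilde\Psi$ the rescaled $\Psi$. Taylor-expanding $f$ about the relevant critical point $x_k$ and invoking $f'(x_k)=0$, $f''(x_k)\asymp1$, $f^{(3)}\lesssim1$ together with the square-root estimates of Theorem~\ref{thm: asym_laws} (which give $|f'(m_{1o})|\asymp\sqrt{\kappa+\eta}$ and $\operatorname{Im}m_1\asymp\sqrt{\kappa+\eta}$ when $\phi\asymp1$) produces, on $\Xi$, a quadratic self-consistent inequality of the form $\sqrt{\kappa+\eta}\,\Theta+\Theta^{2}\prec\Psi(\Theta,\phi)=\sqrt{(\operatorname{Im}m_1+\Theta)/(N\eta)}$. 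Since $\operatorname{Im}m_1\lesssim1$ on $\mathbf{D}$, the worst case $\Theta^{2}\prec\Psi$ forces $\Theta\prec\Psi^{1/2}\prec\big((\operatorname{Im}m_1+\Theta)/(N\eta)\big)^{1/4}$, and absorbing $\Theta$ on the left gives $\Theta\prec(N\eta)^{-1/4}$, still restricted to $\Xi$.

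Finally I would remove $\mathbf{1}(\Xi)$ by a continuity bootstrap over a fine grid of $z\in\mathbf{D}$. At $\eta$ of order one a crude bound gives $\Lambda\prec N^{-1/2}$, so $\Xi$ holds with high probability; assuming inductively $\Lambda\prec(N\eta')^{-1/4}$ at the grid point just above $z$, the Lipschitz continuity of $G$, $m_W$, $m_1$ in $z$ (with constants polynomial in $\eta^{-1}$) forces $\Lambda<(\log N)^{-1}$, i.e.\ $\Xi$ holds at $z$; then the previous two paragraphs give $\Theta\prec(N\eta)^{-1/4}$, and hence $\Lambda\le\Theta+\widetilde\Lambda+\max_i|G_{ii}-m_W|\prec(N\eta)^{-1/4}$ by Lemmas~\ref{lem: weak_locallaw_firstlemma} and~\ref{lem: weak_locallaw_thdlemma}, closing the induction and yielding $\Lambda\prec(N\eta)^{-1/4}$ uniformly on $\mathbf{D}$.

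The main obstacle I anticipate is making the stability analysis uniform over all of $\mathbf{D}$ and across the general growth rates. Near each spectral edge $f'$ degenerates, so one must keep careful track of which term of the Taylor expansion of $f$ dominates, depending on whether $E\in\operatorname{supp}\rho$ and on the size of $\kappa+\eta$ relative to $\Psi$; and in the degenerate regime $\phi\ll1$ the normalised self-consistent equation changes shape because $m_1\to0$. Ensuring that every error term remains genuinely $\mathrm{O}_\prec(\Psi)$ with constants independent of $M$, $N$ and $\phi$ is the delicate point, and it is precisely here that the tailored rescaling to $\mathcal{W}_o$ and the sharp bounds of Theorem~\ref{thm: asym_laws} carry the argument.
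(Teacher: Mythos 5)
Your proposal is correct and follows essentially the same route as the paper: derive the approximate self-consistent equation from Lemmas \ref{lem: weak_locallaw_firstlemma}--\ref{lem: weak_locallaw_thdlemma}, use the stability of $f$ (your Taylor expansion around the critical points is exactly the content of Proposition \ref{prop: stability of f}, which the paper simply invokes) to get $\Theta\prec\Psi^{1/2}\prec(N\eta)^{-1/4}$ on $\Xi$, and then remove $\mathbf{1}(\Xi)$ by a continuity/grid bootstrap in $\eta$ starting from the crude bound at $\eta\gtrsim1$. The only difference is that you spell out the stability inequality and the bootstrap explicitly, whereas the paper cites Proposition \ref{prop: stability of f} and delegates the bootstrap to \cite{Wen2021}.
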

\begin{proof}
    We first observe from Lemma \ref{lem: weak_locallaw_thdlemma} that 
    \begin{equation}
        \begin{split}
           &\{\mathbf{1}(\Xi)+\mathbf{1}(\eta\ge1)\}\Big(\frac{1}{N}\sum_{i\in\mathcal{I}}\frac{1}{G_{ii}}-\frac{1}{m_W}\Big)\\
           &=\{\mathbf{1}(\Xi)+\mathbf{1}(\eta\ge1)\}\frac{1}{N}\sum_{i\in{\cal I}}\Big(-\frac{G_{ii}-m_{W}}{m_{W}^{2}}+\frac{(G_{ii}-m_{W})^{2}}{G_{ii}m_{W}^2}\Big)\\
           &=\{\mathbf{1}(\Xi)+\mathbf{1}(\eta\ge1)\}\frac{1}{N}\sum_{i\in{\cal I}}\frac{(G_{ii}-m_{W})^{2}}{G_{ii}m_{W}^2}\\
           &\prec \Psi^2.
        \end{split}
    \end{equation}
    It then follows from Lemmas \ref{lem: basictool_resolvent}, \ref{lem: weak_locallaw_firstlemma} and \ref{lem: weak_locallaw_seclemma} that 
    \begin{equation}
        \begin{split}
            &\{\mathbf{1}(\Xi)+\mathbf{1}(\eta\ge1)\}\frac{1}{m_W}=\{\mathbf{1}(\Xi)+\mathbf{1}(\eta\ge1)\}\frac{1}{N}\sum_{i\in\mathcal{I}}\frac{1}{G_{ii}}+\rO_{\prec}(\Psi^2)\\
            &=\{\mathbf{1}(\Xi)+\mathbf{1}(\eta\ge1)\}\Big(-z-\frac{z}{\sqrt{MN}}\operatorname{tr}\big((-z\phi^{-1/2}m_W\Sigma-zI)^{-1}\Sigma\big)\\
            &+\frac{z}{\sqrt{MN}}\operatorname{tr}\big((-z\phi^{-1/2}m_W\Sigma-zI)^{-1}\Sigma\big)-\frac{z}{N}\sum_{i\in\mathcal{I}}\operatorname{tr}(\mathcal{G}^{(i)}\Sigma)\Big)+\rO_{\prec}(\Psi^2)\\
            &=\{\mathbf{1}(\Xi)+\mathbf{1}(\eta\ge1)\}\Big(-z-\frac{z}{\sqrt{MN}}\operatorname{tr}\big((-z\phi^{-1/2}m_W\Sigma-zI)^{-1}\Sigma\big)\Big)+\rO_{\prec}(\Psi).
        \end{split}
    \end{equation}
    Since 
    \begin{equation*}
        \operatorname{tr}\big((-z\phi^{-1/2}m_W\Sigma-zI)^{-1}\Sigma\big)=\sum_{i\in\mathcal{I}}\frac{\sigma_i}{-z\phi^{-1/2}m_W\sigma_i-z},
    \end{equation*}
    we have 
    \begin{equation}
    \begin{split}
        &\{\mathbf{1}(\Xi)+\mathbf{1}(\eta\ge1)\}\Big(\frac{1}{m_W}+z-\frac{\phi^{1/2}}{M}\sum_{i\in\mathcal{I}}\frac{\sigma_i}{1+\phi^{-1/2}m_W\sigma_i}\Big)\prec \Psi\\
        &\Rightarrow \{\mathbf{1}(\Xi)+\mathbf{1}(\eta\ge1)\}\{f(\phi^{-1/2}m_W)-\phi^{1/2}z\}\prec \phi^{1/2}\Psi.
    \end{split}
    \end{equation}

    The following proposition gives the stability of $f(m)$.
    \begin{proposition}\label{prop: stability of f}
    Suppose Assumption \ref{assum_summary} holds. Suppose a $z$-dependent function $\delta$ satisfying $N^{-1}\le\delta(z)\le\log^{-1}N$ for $z\in\mathbf{D}$ and assume that $\delta(z)$ is Lipschitz continuous with Lipschitz constant $N^{2}$. Suppose moreover that for each fixed $E$, the function $\eta\mapsto\delta(E+\mathrm{i}\eta)$ is non-increasing for $\eta>0$. Suppose that $\mu_0:\mathbf{D}\rightarrow\mathbb{C}$ is the Stieltjes transform of a probability measure. Let $z\in\mathbf{D}$ and suppose that for all $z^{\prime}\in\operatorname{Lip}(z)$, we have $|f(\phi^{-1/2}\mu_0)-\phi^{1/2}z|\le\delta(z^{\prime})$. Then we have that for some constant $C>0$ 
    \begin{equation*}
        |\mu_0-m_{1}(z)|\le\frac{C\phi^{-1/2}\delta}{\sqrt{\kappa+\eta}+\sqrt{\phi^{-1/2}\delta}}.
    \end{equation*}
    \end{proposition}
Applying the above proposition, we have
\begin{equation}
    \{\mathbf{1}(\Xi)+\mathbf{1}(\eta\ge1)\}|m_W(z)-m_1(z)|\prec\frac{\Psi}{\sqrt{\kappa+\eta}+\sqrt{\Psi}}\prec \Psi^{1/2}.
\end{equation}
Therefore, it follows from Lemmas \ref{lem: weak_locallaw_firstlemma}, \ref{lem: weak_locallaw_seclemma} and \ref{lem: weak_locallaw_thdlemma} that 
\begin{equation}
    \mathbf{1}(\eta\ge1)\Lambda(z)\le\mathbf{1}(\eta\ge1)\Big(\max_i|G_{ii}-m_W|+|m_W-m_1|+\Tilde{\Lambda}\Big)\prec N^{-1/2}.
\end{equation}
The rest of the proof follows from a standard bootstrapping step, which can be found in \cite{Wen2021}; we omit further details here.
\end{proof}

With the above discussions, we are ready to prove Theorem \ref{thm: locallaw_phi>1} for the case $\phi\gtrsim1$.
 
\begin{proof}(Proof of Theorem \ref{thm: locallaw_phi>1} for the case $\phi\gtrsim1$)
From Lemma \ref{lem: weak local law}, we know that $\Xi$ holds with high probability, i.e. $1\prec\mathbf{1}(\Xi)$. So from now on, we can drop the factor $\mathbf{1}(\Xi)$ in all $\Xi$ dependent results without affecting their validity. Recall that 
\begin{equation}
    (1-\mathbb{E}_i)\frac{1}{G_{ii}}=-zZ_i,
\end{equation}
and we write 
\begin{equation}\label{eq_locallaw_decomp}
    \begin{split}
        &\frac{1}{\sqrt{MN}}\operatorname{tr}\big((-z\phi^{-1/2}m_W\Sigma-zI)^{-1}\Sigma\big)-\frac{1}{\sqrt{MN}}\operatorname{tr}(\mathcal{G}^{(i)}\Sigma)\\
        &=\frac{1}{\sqrt{MN}}\operatorname{tr}\Big(\sum_{i\in\mathcal{I}}\frac{(\phi^{-1/2}m_W\Sigma+I)^{-1}}{z(1+\mathbf{y}_i^{*}\mathcal{G}^{(i)}\mathbf{y}_i)}(\mathbf{y}_i\mathbf{y}_i^{*}\mathcal{G}^{(i)}\Sigma-\frac{1}{\sqrt{MN}}\Sigma\mathcal{G}^{(i)}\Sigma+\frac{1}{\sqrt{MN}}\Sigma\mathcal{G}^{(i)}\Sigma-\frac{1}{\sqrt{MN}}\Sigma\mathcal{G}\Sigma)\Big)\\
        &=\frac{1}{\sqrt{MN}}\sum_{i\in\mathcal{I}}G_{ii}\operatorname{tr}(R_i)+\frac{1}{\sqrt{MN}}\sum_{i\in\mathcal{I}}G_{ii}\frac{1}{\sqrt{MN}}\operatorname{tr}\big((\phi^{-1/2}m_W\Sigma+I)^{-1}\Sigma(\mathcal{G}^{(i)}-\mathcal{G})\Sigma\big),
    \end{split}
\end{equation}
where 
    \begin{equation}
        R_i:=(\phi^{-1/2}m_W\Sigma+I)^{-1}\big(\mathbf{y}_i\mathbf{y}_i^{*}\mathcal{G}^{(i)}\Sigma-\frac{1}{\sqrt{MN}}\Sigma\mathcal{G}^{(i)}\Sigma\big)
    \end{equation}
For the second term in \eqref{eq_locallaw_decomp}, we observe from Lemmas \ref{lem: basictool_resolvent}, \ref{lem: baisctool_largedevi}, \eqref{eq_priorbounds_phi>1} and \eqref{eq_entrywise_boundGF} that
    \begin{equation}
        \begin{split}
            &\Big|\frac{1}{\sqrt{MN}}\sum_{i\in\mathcal{I}}G_{ii}\frac{1}{\sqrt{MN}}{\rm tr}((\phi^{-1/2}m_W\Sigma+I)^{-1}\Sigma(\mathcal{G}^{(i)}-\mathcal{G})\Sigma)\Big|\\
	&=\Big|\frac{1}{\sqrt{MN}}\sum_{i\in\mathcal{I}}G_{ii}\frac{1}{\sqrt{MN}}{\rm tr}((\phi^{-1/2}m_W\Sigma+I)^{-1}\Sigma\frac{\mathcal{G}^{(i)}\mathbf{y}_i\mathbf{y}_i^{*}\mathcal{G}^{(i)}}{1+\mathbf{y}_i^{*}\mathcal{G}^{(i)}\mathbf{y}_i}\Sigma)\Big| \\
	&\le\frac{1}{\sqrt{MN}}\sum_{i\in\mathcal{I}}|z||G_{ii}|^2\frac{1}{\sqrt{MN}}|\mathbf{y}_i^{*}\mathcal{G}^{(i)}\Sigma(\phi^{-1/2}m_W\Sigma+I)^{-1}\Sigma\mathcal{G}^{(i)}\mathbf{y}_i|\\
 &\prec \phi^{-1/2}\big(\frac{\operatorname{Im}m_1(z)+\Theta+\Tilde{\Lambda}^2}{\eta\sqrt{MN}}\big)\prec \Psi^2.
        \end{split}
    \end{equation}
Furthermore, by the same procedures, one can easily check that 
    \begin{equation}
        \begin{split}
            \frac{1}{\sqrt{MN}}\sum_{i\in\mathcal{I}}G_{ii}\operatorname{tr}(R_i)&=\frac{1}{\sqrt{MN}}\sum_{i\in\mathcal{I}}G_{ii}\operatorname{tr}(R^{(i)}_i)+\frac{1}{\sqrt{MN}}\sum_{i\in\mathcal{I}}G_{ii}\operatorname{tr}(R_i-R^{(i)}_i)\\
            &\prec\frac{1}{\sqrt{MN}}\sum_{i\in\mathcal{I}}G_{ii}\operatorname{tr}(R^{(i)}_i)+\phi^{-1/2}(\frac{\operatorname{Im}m_1(z)+\Theta+\Tilde{\Lambda}^2}{\eta M})^{1/2}\frac{\phi^{-1/2}}{N\eta},
        \end{split}
    \end{equation}
where we used Lemma \ref{lem: basictool_diffbounds} and $R_i^{(i)}:=(\phi^{-1/2}m_W^{(i)}\Sigma+I)^{-1}\big(\mathbf{y}_i\mathbf{y}_i^{*}\mathcal{G}^{(i)}\Sigma-\frac{1}{\sqrt{MN}}\Sigma\mathcal{G}^{(i)}\Sigma\big)$.

To improve the weak local law to the strong local law, a key input is Proposition \ref{prop: fluctuation averaging} below, whose proof can be found in \cite{Wen2021}.
 \begin{proposition}\label{prop: fluctuation averaging}
Suppose Assumption \ref{assum_summary} holds.  Let $\nu\in[1/4,1]$. Denote $\Phi_{\nu}=\sqrt{\frac{\operatorname{Im} m_1(z)+(N\eta)^{-\nu}}{N\eta}}+\frac{1}{N\eta}.$
Suppose moreover that $\Theta\prec(N\eta)^{-\nu}$ uniformly for $z\in\mathbf{D}$.
Then we have
	\begin{equation}
		\frac{1}{\sqrt{MN}}\sum_{i\in{\cal I}}(1-\mathbb{E}_i)\frac{1}{G_{ii}}\prec\Phi_{\nu}^{2},\label{eq:fluctuation of Z}
		\end{equation}
		and
		\begin{equation}
		\frac{1}{\sqrt{MN}}\sum_{i\in{\cal I}}(1-\mathbb{E}_i)\mathscr{R}_{i}\prec \Phi_{\nu}^{2},\label{eq:fluctuation of R}
		\end{equation}
		uniformly for $z\in\mathbf{D}$, where
		\begin{equation}
		\mathscr{R}_i:=\mathbf{y}_i^{*}\mathcal{G}^{(i)}\Sigma(\phi^{-1/2}m_W^{(i)}\Sigma+I)^{-1}\mathbf{y}_i.
		\end{equation}
 \end{proposition}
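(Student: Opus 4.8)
The plan is to establish both \eqref{eq:fluctuation of Z} and \eqref{eq:fluctuation of R} by the high-moment fluctuation-averaging argument of \cite{knowles2017anisotropic,Wen2021}, carefully tracking every power of $\phi$. Both estimates concern an average $\frac{1}{\sqrt{MN}}\sum_{i\in\mathcal{I}}(1-\mathbb{E}_i)Q_i$ in which $Q_i$ is, up to a deterministic factor, a quadratic form $\mathbf{y}_i^{*}A^{(i)}\mathbf{y}_i$ with $A^{(i)}$ being $\mathbb{E}_i$-measurable: for \eqref{eq:fluctuation of Z} one has $(1-\mathbb{E}_i)G_{ii}^{-1}=-z(1-\mathbb{E}_i)\mathbf{y}_i^{*}\mathcal{G}^{(i)}\mathbf{y}_i$ by Lemma \ref{lem: basictool_resolvent}, while for \eqref{eq:fluctuation of R} one has $A^{(i)}=\mathcal{G}^{(i)}\Sigma(\phi^{-1/2}m_W^{(i)}\Sigma+I)^{-1}$. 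Since $\Theta\prec(N\eta)^{-\nu}$ is assumed, the single-site estimate of Lemma \ref{lem: weak_locallaw_firstlemma}, together with \eqref{eq_entrywise_boundGF} and the bound $|1+\phi^{-1/2}m_W\sigma_i|\gtrsim1$ coming from Theorem \ref{thm: asym_laws}(iii), upgrades to $(1-\mathbb{E}_i)Q_i\prec\Phi_\nu$; so it remains only to extract one further factor from the cancellation built into the projections $1-\mathbb{E}_i$.

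First I would fix a large even integer $2p$ and expand $\mathbb{E}\bigl|\frac{1}{\sqrt{MN}}\sum_i(1-\mathbb{E}_i)Q_i\bigr|^{2p}$ into a sum over index tuples $(i_1,\dots,i_{2p})$. The combinatorial mechanism is the standard one: using the resolvent identities of Lemma \ref{lem: basictool_resolvent} repeatedly, each factor $Q_{i_k}$ is rewritten in terms of minors $\mathcal{G}^{(i_k\,\mathcal{T})}$ with $\mathcal{T}$ ranging over the other indices present in the tuple, so that a monomial's dependence on the column $\mathbf{y}_{i_k}$ becomes explicit; the projection $1-\mathbb{E}_{i_k}$ then forces any index occurring with multiplicity one inside a monomial to contribute zero after taking expectations, or more generally to gain an extra smallness factor. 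Organising the sum according to the partition of $\{i_1,\dots,i_{2p}\}$ into coinciding blocks and using the large-deviation bounds of Lemma \ref{lem: baisctool_largedevi} together with the Frobenius estimate \eqref{eq_entrywise_boundGF}, each surviving monomial is shown to contribute at most $N^{\epsilon}\Phi_\nu^{2p}$ for every $\epsilon>0$ (with an implied constant depending on $p,\epsilon$); summing the finitely many monomial types yields $\mathbb{E}\bigl|\frac{1}{\sqrt{MN}}\sum_i(1-\mathbb{E}_i)Q_i\bigr|^{2p}\le C_{p,\epsilon}N^{\epsilon}\Phi_\nu^{2p}$, whence the claim follows by Markov's inequality, a union bound over a sufficiently fine net of $z\in\mathbf{D}$, and the Lipschitz continuity of all quantities in $z$.

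The step I expect to be the main obstacle is the $\phi$-bookkeeping, since this is precisely what distinguishes the present setting from \cite{knowles2017anisotropic,Wen2021}. When $\phi\gtrsim1$ one uses $|z|\asymp\phi^{1/2}$ and must verify that the factors of $|z|$ produced along the resolvent expansions cancel the factors $(MN)^{-1/2}\|\cdot\|_F$ produced by each off-diagonal or centered quadratic form in Lemma \ref{lem: baisctool_largedevi}, so that the net exponent of $\phi$ in each monomial is nonpositive; this is where $M\gtrsim N$ enters, through the $\frac{M-N}{|z|^2MN}$ term in \eqref{eq_entrywise_boundGF}. When $\phi\ll1$ the same scheme applies with the square-root asymptotics of Theorem \ref{thm: asym_laws}(i) replaced by those of Theorem \ref{thm: asym_laws}(ii); here $m_W\asymp\phi^{1/2}\to0$, so $|1+\phi^{-1/2}m_W\sigma_i|\gtrsim1$ again holds and the relevant norms rescale correctly, so that the bound degrades to $\Phi_\nu^2$ and no worse. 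A secondary point, specific to \eqref{eq:fluctuation of R}, is that $m_W^{(i)}$ inside $A^{(i)}$ varies with $i$; before running the expansion one replaces $m_W^{(i)}$ by $m_W$ at cost $\mathrm{O}_{\prec}((N\eta)^{-1})$ using Lemma \ref{lem: basictool_diffbounds}, which is absorbed into $\Phi_\nu^2$. Apart from these $\phi$-adaptations the counting is identical to that in \cite{Wen2021}, to which I would refer for the routine details.
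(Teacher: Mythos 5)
Your sketch follows essentially the same route as the paper, which itself does not spell out the argument but defers it to the fluctuation-averaging machinery of \cite{Wen2021} (high-moment expansion of $\frac{1}{\sqrt{MN}}\sum_i(1-\mathbb{E}_i)Q_i$ via resolvent identities, cancellation from the projections $1-\mathbb{E}_i$, large-deviation and Frobenius-norm bounds, then Markov plus a net in $z$), with the only genuinely new content being the $\phi$-bookkeeping and the control of $(\phi^{-1/2}m_W^{(i)}\Sigma+I)^{-1}$ via Theorem \ref{thm: asym_laws}(iii) — exactly the adaptations you identify. So your proposal is consistent with the paper's treatment and adds no divergent ideas beyond the routine details both you and the paper delegate to \cite{Wen2021}.
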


Therefore, given that $\Theta\prec(N\eta)^{-\nu}$ for some $\nu\in[1/4,1]$, it follows from Proposition \ref{prop: fluctuation averaging} that
\begin{equation}\label{rigidi}
    |f(\phi^{-1/2}m_W)-\phi^{1/2}z|\prec \phi^{1/2}\Psi_{\nu}^2\prec \phi^{1/2}\big(\frac{1}{(N\eta)^{\nu+1}}+\frac{\operatorname{Im}m_1(z)}{N\eta}\big).
\end{equation}
Then we observe from Proposition \ref{prop: stability of f} and Theorem \ref{thm: asym_laws} that 
\begin{equation}
    \Theta\prec\frac{1}{(N\eta)^{(\nu+1)/2}}.
\end{equation}
So, 
\begin{equation}
    \Lambda\prec\Psi_{\nu}^2+\Theta\prec\Big(\sqrt{\frac{\operatorname{Im}m_1(z)}{N\eta}}+\frac{1}{(N\eta)^{(\nu+1)/2}}\Big).
\end{equation}
One can see that the error bound of $\Lambda$ improves from $1/(N\eta)^{\nu}$ to $1/(N\eta)^{(\nu+1)/2}$. Hence implementing the above argument a finite number of times, we obtain 
\begin{equation}
    \Lambda\prec \sqrt{\frac{\operatorname{Im}m_1(z)}{N\eta}}+\frac{1}{N\eta}.
\end{equation}
We complete the proof for the case $\phi\gtrsim1$. 
\end{proof}

\subsection{Case $\phi\ll 1$}
In this subsection, we consider the local laws for the case $\phi\ll 1$. We inherit the notation from Subsection \ref{subsec: case phige1}. Since, at this time, the scale of the typical rates of $\mathcal{W}$ is pretty large, saying $\rO(\phi^{-1/2})$, we turn to the discussion with $\mathcal{W}_o$ and scale back in the end. We first denote one $o-$region that
\begin{align*}
\mathbf{D}_o:=\phi^{1/2}\mathbf{D}.
\end{align*}
Denote $\eta_o$ as the imaginary part in $\mathbf{D}_o$ to avoid confusion. We have
\begin{align}\label{etao}
    \eta_o=\phi^{1/2}\eta.
\end{align}
Then, similar to \eqref{eq_priorbounds_phi>1}, we have the following observation,
\begin{equation}
    \{\mathbf{1}(\Xi_o)+\mathbf{1}(\eta_o\ge1)\}|(G_o)_{ij}^{(\mathcal{T})}|+\mathbf{1}(\Xi_o)|((G_o)_{ii}^{(\mathcal{T})})^{-1}|=\rO_{\prec}(1),
\end{equation}
which holds uniformly on $i,j\in\mathcal{I}$ and $z_o\in\mathbf{D}_o$. Denote 
\begin{equation}
    (Z_o)_i:=(1-\mathbb{E}_i)[(\mathbf{y}_i^o)^{*}\mathcal{G}_o^{(i)}\mathbf{y}_i^o]=(\mathbf{y}_i^o)^{*}\mathcal{G}_o^{(i)}\mathbf{y}_i^o-\frac{1}{N}\operatorname{tr}(\mathcal{G}_o^{(i)}\Sigma).
\end{equation}
We have the following lemma parallel to Lemma \ref{lem: weak_locallaw_firstlemma},
\begin{lemma}\label{lem: weak_firstlemma_phi<1}
     Suppose Assumption \ref{assum_summary} holds. Then uniformly for all $1\le i\le N$ and $z_o\in\mathbf{D}_o$, 
    \begin{equation*}
        \{\mathbf{1}(\Xi_{o})+\mathbf{1}(\eta_o\ge1)\}(|z_o(Z_o)_i|+\Tilde{\Lambda}_o)\prec\Psi_o(\Theta_o,\phi).
    \end{equation*}
\end{lemma}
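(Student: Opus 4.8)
\textbf{Proof strategy for Lemma \ref{lem: weak_firstlemma_phi<1}.}
The plan is to mirror the proof of Lemma \ref{lem: weak_locallaw_firstlemma}, working throughout with the rescaled matrix $\mathcal{W}_o$ (equivalently $W_o$) in the region $\mathbf{D}_o=\phi^{1/2}\mathbf{D}$, and then to track how the powers of $\phi$ propagate. First I would record the basic resolvent identity: for $i\ne j$,
\begin{gather*}
\mathbf{1}(\Xi_o)|(G_o)_{ij}|\le\mathbf{1}(\Xi_o)|z_o||(G_o)_{ii}(G_o)_{jj}^{(i)}||(\mathbf{y}_i^o)^{*}\mathcal{G}_o^{(ij)}\mathbf{y}_j^o|\prec\mathbf{1}(\Xi_o)|z_o||(G_o)_{ii}(G_o)_{jj}^{(i)}|\frac1N\|\Sigma\|\,\|\mathcal{G}_o^{(ij)}\|_F,
\end{gather*}
where the large-deviation bound for quadratic forms in the $\mathbf{z}$-variables (Lemma \ref{lem: baisctool_largedevi}(ii)) replaces the $\mathbf{x}$-version, so that $1/N$ appears instead of $1/\sqrt{MN}$; this is the principal bookkeeping change relative to the $\phi\gtrsim 1$ case.

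Next I would control $\|\mathcal{G}_o^{(ij)}\|_F^2$ via the identity $\|\mathcal{G}_o^{(ij)}\|_F^2=\eta_o^{-1}\operatorname{Im}\operatorname{tr}\mathcal{G}_o^{(ij)}$ from Lemma \ref{lem: basictool_resolvent}, together with $\operatorname{tr}\mathcal{G}_o^{(ij)}=(N-2-M)/z_o+\operatorname{tr}G_o^{(ij)}$ and the interlacing/minor-removal bounds. Using the prior bounds on $\Xi_o$ and the square-root estimates of Theorem \ref{thm: asym_laws}(ii) (which in the $o$-scale say $\operatorname{Im}m_{1o}(z_o)\asymp\phi^{1/4}$ and $m_{1o}+\sigma_k^{-1}\asymp\phi^{1/2}$), one gets
\begin{equation*}
\mathbf{1}(\Xi_o)\frac{\|\mathcal{G}_o^{(ij)}\|_F^2}{N}\le\mathbf{1}(\Xi_o)\Big(\frac{\operatorname{Im}m_{1o}(z_o)+\Theta_o+\widetilde{\Lambda}_o^2}{\eta_o}+\frac{(N-M)}{|z_o|^2 N}\Big)\cdot\frac1N .
\end{equation*}
Feeding this back into the resolvent identity and using $|z_o|\asymp 1$ on $\mathbf{D}_o$ yields, after the usual self-improving absorption of the $\widetilde\Lambda_o/(N\eta_o)^{1/2}$ term, the bound $\mathbf{1}(\Xi_o)\widetilde\Lambda_o\prec\mathbf{1}(\Xi_o)\Psi_o(\Theta_o,\phi)$. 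The estimate on $z_o(Z_o)_i$ then follows from the same large-deviation bound applied to $(\mathbf{y}_i^o)^{*}\mathcal{G}_o^{(i)}\mathbf{y}_i^o-N^{-1}\operatorname{tr}(\mathcal{G}_o^{(i)}\Sigma)\prec N^{-1}\|\Sigma\|\,\|\mathcal{G}_o^{(i)}\|_F$, now with $\Theta_o+\widetilde\Lambda_o^2$ already controlled. The case $\mathbf{1}(\eta_o\ge 1)$ is handled identically except that the difference $|\operatorname{tr}G_o^{(i)}-\operatorname{tr}G_o|$ is bounded directly by Lemma \ref{lem: basictool_diffbounds} rather than through $\widetilde\Lambda_o$.

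The main obstacle I anticipate is not conceptual but dimensional: because $\phi\ll1$ means $M\ll N$, the term $(N-M)/(|z_o|^2N)$ is genuinely of order one (unlike in the $\phi\gtrsim 1$ case, where $M\gtrsim N$ made the analogous $(M-N)/(MN)$ term negligible), so one must verify that after dividing by $N$ it contributes only $\rO(1/N)$ to $\|\mathcal{G}_o^{(ij)}\|_F^2/N$ and is therefore absorbed into $\Psi_o$; this forces careful use of the identity $\operatorname{tr}\mathcal{G}_o=(N-M)/(-z_o)+\operatorname{tr}G_o$ and of the fact that $\operatorname{Im}m_{1o}\asymp\phi^{1/4}$ stays bounded below on $\mathbf{D}_o$, rather than any naive triangle-inequality estimate. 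Once this scaling check is in place, the argument is a routine transcription of the $\phi\gtrsim 1$ proof with $1/\sqrt{MN}\to 1/N$ and $|z|\asymp\phi^{1/2}\to|z_o|\asymp 1$.
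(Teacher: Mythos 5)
Your proposal follows essentially the same route as the paper: transcribe the $\phi\gtrsim1$ argument to the $o$-scale, replace the $1/\sqrt{MN}$ large-deviation scale by $1/N$, bound $\|\mathcal{G}_o^{(ij)}\|_F^2/N^2$ through $\operatorname{Im}\operatorname{tr}G_o^{(ij)}$ and the trace relation $\operatorname{tr}\mathcal{G}_o^{(ij)}=(N-2-M)/z_o+\operatorname{tr}G_o^{(ij)}$ (yielding exactly the paper's estimate with the extra $1/N$), and then conclude for $\widetilde\Lambda_o$, $z_o(Z_o)_i$ and the $\eta_o\ge1$ case as in the $\phi\gtrsim1$ proof. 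The only small inaccuracy is incidental: for $M\ll N$ the term $(N-2-M)/(|z_o|^2N^2)$ enters with a favorable (negative) sign, so it is even easier to handle than your discussion suggests, and $\operatorname{Im}m_{1o}\asymp\phi^{1/4}$ is not bounded below by a constant, but neither point affects the argument.
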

\begin{proof}
    The proof of this lemma is similar to the one in Lemma \ref{lem: weak_locallaw_firstlemma} or \cite{Wen2021}; we only indicate several key observations. Firstly, We still have
    \begin{equation}
        \mathbf{1}(\Xi_o)|\operatorname{Im}\operatorname{tr}(G_o)^{(ij)}-\operatorname{Im}\operatorname{tr}G_o|\le\mathbf{1}(\Xi_o)\{CN\Tilde{\Lambda}_o^2+2\operatorname{Im}m_{1o}(z_o)\}.
    \end{equation}
    It then, together with the fact
    \begin{equation}
        \mathbf{1}(\Xi_o)\frac{\|(\mathcal{G}_o)^{(ij)}\|^2_F}{N^2}=\mathbf{1}(\Xi_o)\Big(\frac{\operatorname{Im}\operatorname{tr}G_o^{(ij)}}{\eta_o N^2}-\frac{N-2-M}{|z_o|^2N^2}\Big),
    \end{equation}
    implies that 
    \begin{equation}
        \mathbf{1}(\Xi_o)\frac{\|(\mathcal{G}_o)^{(ij)}\|^2_F}{N^2}\le\mathbf{1}(\Xi_o)\Big(\frac{\operatorname{Im}m_{1o}(z_o)+\Theta_o+\Tilde{\Lambda}^2_o}{\eta_o N}+\frac{1}{N}\Big).
    \end{equation}
    Then, it is a routine to obtain the desired results.
\end{proof}

By the procedures in the proof of Lemma \ref{lem: weak_firstlemma_phi<1}, we obtain the following result, which is the counterpart of Lemmas \ref{lem: weak_locallaw_seclemma} and \ref{lem: weak_locallaw_thdlemma},
\begin{lemma}\label{lem: weak_seclemma_phi<1}
    Suppose Assumption \ref{assum_summary} holds. Then uniformly for all $1\le i\le N$ and $z_o\in\mathbf{D}_o$, 
    \begin{equation*}
        \{\mathbf{1}(\Xi_o)+\mathbf{1}(\eta_o\ge1)\}\Big(\frac{1}{N}\operatorname{tr}(\mathcal{G}_o^{(i)}\Sigma)-\frac{1}{N}\operatorname{tr}\big((-z_om_{W_o}\Sigma-z_oI)^{-1}\Sigma\big)\Big)\prec\Psi_o(\Theta_o,\phi).
    \end{equation*}
\end{lemma}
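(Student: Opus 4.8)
The proof parallels those of Lemmas~\ref{lem: weak_locallaw_seclemma} and~\ref{lem: weak_locallaw_thdlemma}, carried out on the $o$-scale: every factor $1/\sqrt{MN}$ is replaced by $1/N$, the large-deviation estimates are taken from part~(ii) of Lemma~\ref{lem: baisctool_largedevi}, and the a priori bounds together with the estimate $\mathbf{1}(\Xi_o)\|\mathcal{G}_o^{(i)}\Sigma\|_F^2/N^2\lesssim\mathbf{1}(\Xi_o)\big(\operatorname{Im}m_{1o}(z_o)+\Theta_o+\widetilde{\Lambda}_o^2\big)/(\eta_oN)$ are those already produced in the proof of Lemma~\ref{lem: weak_firstlemma_phi<1}. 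The first step is to split
\[
\frac1N\operatorname{tr}(\mathcal{G}_o^{(i)}\Sigma)-\frac1N\operatorname{tr}\!\big((-z_om_{W_o}\Sigma-z_oI)^{-1}\Sigma\big)
=\Big(\tfrac1N\operatorname{tr}(\mathcal{G}_o^{(i)}\Sigma)-\tfrac1N\operatorname{tr}(\mathcal{G}_o\Sigma)\Big)+\Big(\tfrac1N\operatorname{tr}(\mathcal{G}_o\Sigma)-\tfrac1N\operatorname{tr}\!\big((-z_om_{W_o}\Sigma-z_oI)^{-1}\Sigma\big)\Big),
\]
and to bound each summand on the events $\Xi_o$ and $\{\eta_o\ge1\}$ separately.

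For the first summand I would use the rank-one identity $\mathcal{G}_o-\mathcal{G}_o^{(i)}=-(1+(\mathbf{y}_i^{o})^{*}\mathcal{G}_o^{(i)}\mathbf{y}_i^{o})^{-1}\mathcal{G}_o^{(i)}\mathbf{y}_i^{o}(\mathbf{y}_i^{o})^{*}\mathcal{G}_o^{(i)}$ together with $(G_o)_{ii}=-(z_o+z_o(\mathbf{y}_i^{o})^{*}\mathcal{G}_o^{(i)}\mathbf{y}_i^{o})^{-1}$ from Lemma~\ref{lem: basictool_resolvent} to rewrite it as $\pm\tfrac1N z_o(G_o)_{ii}(\mathbf{y}_i^{o})^{*}\mathcal{G}_o^{(i)}\Sigma\mathcal{G}_o^{(i)}\mathbf{y}_i^{o}$; applying Lemma~\ref{lem: baisctool_largedevi}(ii), the bound $\mathbf{1}(\Xi_o)|z_o(G_o)_{ii}|=\rO_{\prec}(1)$ (recall $|z_o|\asymp1$ on $\mathbf{D}_o$ when $\phi\ll1$), and the Frobenius estimate above, this is $\prec(\operatorname{Im}m_{1o}+\Theta_o+\widetilde{\Lambda}_o^2)/(\eta_oN)\prec\Psi_o^2\prec\Psi_o$. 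For the second summand I would expand, as in the $\phi\gtrsim1$ case,
\[
\mathcal{G}_o-(-z_om_{W_o}\Sigma-z_oI)^{-1}=\sum_{i\in\mathcal{I}}\frac{(m_{W_o}\Sigma+I)^{-1}}{z_o\big(1+(\mathbf{y}_i^{o})^{*}\mathcal{G}_o^{(i)}\mathbf{y}_i^{o}\big)}\Big(\mathbf{y}_i^{o}(\mathbf{y}_i^{o})^{*}\mathcal{G}_o^{(i)}-\tfrac1N\Sigma\mathcal{G}_o\Big),
\]
take the trace against $\Sigma$, insert and subtract $\tfrac1N\Sigma\mathcal{G}_o^{(i)}\Sigma$ inside, estimate the centred quadratic forms $\big|(\mathbf{y}_i^{o})^{*}\mathcal{G}_o^{(i)}\Sigma(m_{W_o}\Sigma+I)^{-1}\mathbf{y}_i^{o}-\tfrac1N\operatorname{tr}(\cdot)\big|$ by Lemma~\ref{lem: baisctool_largedevi}(ii), and control the deterministic pieces by the same Frobenius bound, using Lemma~\ref{lem: basictool_resolvent} once more to pass from $\mathcal{G}_o$ to $\mathcal{G}_o^{(i)}$ at error $\prec\Psi_o^2$. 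Finally, the $\phi\ll1$ analogue of Lemma~\ref{lem: weak_locallaw_thdlemma}, i.e.\ $\mathbf{1}(\Xi_o)|(G_o)_{ii}-(G_o)_{jj}|\prec\Psi_o$, follows from $(G_o)_{ii}-(G_o)_{jj}=(G_o)_{ii}(G_o)_{jj}\big(1/(G_o)_{jj}-1/(G_o)_{ii}\big)$ combined with Lemma~\ref{lem: weak_firstlemma_phi<1} and the estimate just obtained.

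The step I expect to be the main obstacle is controlling $\|(m_{W_o}\Sigma+I)^{-1}\|$ on $\mathbf{D}_o$. In the regime $\phi\gtrsim1$ this norm is $\rO(1)$ by statement~(iii) of Theorem~\ref{thm: asym_laws}, as used in~\eqref{eq_locallaw_est_inverseoperatornorm}, but for $\phi\ll1$ the situation genuinely differs: $m_{W_o}$ is close to $m_{1o}$, which by Theorem~\ref{thm: asym_laws}(ii) and Remark~\ref{rem: regular edges} sits within a $\phi$-power of $-\sigma_k^{-1}$ near the $k$-th spectral edge, so $m_{W_o}\sigma_k+1$ is small and the operator norm is only bounded by a negative power of $\phi$ (roughly $(\operatorname{Im}m_{1o})^{-1}\asymp\phi^{-1/4}$, after using $\Theta_o\le(\log N)^{-1}$ on $\Xi_o$, or the weak local law, to transfer the edge estimate from $m_{1o}$ to $m_{W_o}$). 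I would therefore need to verify that this $\phi$-power, when multiplied against the $\phi$-weighted Frobenius and large-deviation errors above — which on $\mathbf{D}_o$ carry the compensating smallness coming from $\operatorname{Im}m_{1o}\asymp\phi^{1/4}$ at the edge and from $\eta_o\ge\phi^{1/2}K^{-1+c}$ — is still absorbed into $\Psi_o(\Theta_o,\phi)$; this absorption is exactly the point of the rescalings $\mathbf{D}_o=\phi^{1/2}\mathbf{D}$ and $\eta_o=\phi^{1/2}\eta$, and it mirrors the handling of $1/\sqrt{\kappa}$-type factors in standard edge local laws. The remaining bookkeeping is routine: on the event $\{\eta_o\ge1\}$ one replaces the $\widetilde{\Lambda}_o$-controlled differences of minors by the deterministic difference bounds of Lemma~\ref{lem: basictool_diffbounds}, exactly as in \cite{Wen2021}.
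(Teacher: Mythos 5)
Your argument is essentially the paper's own proof: the paper disposes of this lemma in two lines, asserting that it follows from the $\phi\gtrsim1$ arguments of Lemmas \ref{lem: weak_locallaw_seclemma} and \ref{lem: weak_locallaw_thdlemma} once the bound for $\|\mathcal{G}_o^{(i)}\Sigma\|_F$ from the proof of Lemma \ref{lem: weak_firstlemma_phi<1} is available (with further details deferred to \cite{Wen2021}), and this is exactly the decomposition, rank-one identity, large-deviation step from Lemma \ref{lem: baisctool_largedevi}(ii), and $\eta_o\ge1$ bookkeeping you carry out at the $o$-scale. The one point you leave open, namely the control of $\|(m_{W_o}\Sigma+I)^{-1}\|$ near the edges when $\phi\ll1$, is not discussed in the paper's proof either (Theorem \ref{thm: asym_laws}(iii) is invoked only at the original scale in \eqref{eq_locallaw_est_inverseoperatornorm}), so flagging it is a fair observation about a glossed-over step rather than a departure from the paper's route.
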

\begin{lemma}\label{lem: weak_thdlemma_phi<1}
    Suppose Assumption \ref{assum_summary} holds. Then uniformly for all $1\le i\le N$ and $z_o\in\mathbf{D}_o$, 
    \begin{equation*}
        \{\mathbf{1}(\Xi_o)+\mathbf{1}(\eta_o\ge1)\}|(G_o)_{ii}-(G_o)_{jj}|\prec\Psi_o(\Theta_o,\phi).
    \end{equation*}
\end{lemma}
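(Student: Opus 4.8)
The plan is to follow the proof of Lemma~\ref{lem: weak_locallaw_thdlemma}, working with the rescaled companion matrix $W_o$ throughout and replacing $z$, $\mathcal{G}$, $G_{ii}$, $Z_i$, $m_W$, $\Psi$ by $z_o$, $\mathcal{G}_o$, $(G_o)_{ii}$, $(Z_o)_i$, $m_{W_o}$, $\Psi_o$, and the normalization $(MN)^{-1/2}$ by $N^{-1}$. First I would apply the first resolvent identity of Lemma~\ref{lem: basictool_resolvent} to write $(G_o)_{ii}^{-1}=-z_o-z_o(\mathbf{y}_i^o)^{*}\mathcal{G}_o^{(i)}\mathbf{y}_i^o$ (and likewise with $j$), and then combine this with the elementary identity
\[
(G_o)_{ii}-(G_o)_{jj}=(G_o)_{ii}(G_o)_{jj}\big((G_o)_{jj}^{-1}-(G_o)_{ii}^{-1}\big)
=z_o(G_o)_{ii}(G_o)_{jj}\big[(\mathbf{y}_i^o)^{*}\mathcal{G}_o^{(i)}\mathbf{y}_i^o-(\mathbf{y}_j^o)^{*}\mathcal{G}_o^{(j)}\mathbf{y}_j^o\big].
\]
Decomposing each quadratic form via $(\mathbf{y}_i^o)^{*}\mathcal{G}_o^{(i)}\mathbf{y}_i^o=(Z_o)_i+N^{-1}\operatorname{tr}(\mathcal{G}_o^{(i)}\Sigma)$ splits the bracket into a fluctuation part $(Z_o)_i-(Z_o)_j$ and a trace part $N^{-1}\operatorname{tr}(\mathcal{G}_o^{(i)}\Sigma)-N^{-1}\operatorname{tr}(\mathcal{G}_o^{(j)}\Sigma)$, the latter of which I would treat by inserting and subtracting the common deterministic quantity $N^{-1}\operatorname{tr}\big((-z_om_{W_o}\Sigma-z_oI)^{-1}\Sigma\big)$.

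On the event $\Xi_o$ (and, separately, when $\eta_o\ge1$) the a priori bound recorded just before Lemma~\ref{lem: weak_firstlemma_phi<1} gives $(G_o)_{ii},(G_o)_{jj}=\rO_{\prec}(1)$; moreover, since for $\phi\ll1$ the bulk of $\mathcal{W}_o$ lies at distance $\asymp1$ from the origin (Remark~\ref{rem: regular edges}), the rescaling $z=\phi^{-1/2}z_o$ keeps $|z_o|\asymp1$ on all of $\mathbf{D}_o$. Hence the prefactor $z_o(G_o)_{ii}(G_o)_{jj}$ is $\rO_{\prec}(1)$, and it remains to bound the bracket. For the fluctuation part, Lemma~\ref{lem: weak_firstlemma_phi<1} gives $|z_o(Z_o)_i|\prec\Psi_o$ uniformly in $i$, so $|z_o\{(Z_o)_i-(Z_o)_j\}|\prec\Psi_o$. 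For the trace part, Lemma~\ref{lem: weak_seclemma_phi<1} shows that each of $N^{-1}\operatorname{tr}(\mathcal{G}_o^{(i)}\Sigma)$ and $N^{-1}\operatorname{tr}(\mathcal{G}_o^{(j)}\Sigma)$ differs from the above deterministic quantity by $\rO_{\prec}(\Psi_o)$, so their difference is $\rO_{\prec}(\Psi_o)$, and multiplying by the bounded factor $z_o$ preserves this. Combining the estimates yields $\{\mathbf{1}(\Xi_o)+\mathbf{1}(\eta_o\ge1)\}|(G_o)_{ii}-(G_o)_{jj}|\prec\Psi_o+\Psi_o^2\prec\Psi_o$, using $\Psi_o\prec1$ on $\mathbf{D}_o$.

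No genuine obstacle is expected here: the argument is structurally identical to that of Lemma~\ref{lem: weak_locallaw_thdlemma}, and is in fact marginally simpler because $|z_o|\asymp1$ removes the need to track the compensating $\phi^{-1/2}$ factors that appear in the $\phi\gtrsim1$ regime. The only points requiring attention are the bookkeeping of the $\phi$-scaling (confirming $|z_o|\asymp1$ and $\eta_o=\phi^{1/2}\eta$ on $\mathbf{D}_o$) and the fact that the entrywise a priori bound $(G_o)_{ii}=\rO_{\prec}(1)$ — the $\phi\ll1$ analogue of \eqref{eq_priorbounds_phi>1} — is valid on $\Xi_o\cup\{\eta_o\ge1\}$; both are already in place above, so the proof collapses to the same one-line chain of inequalities as in Lemma~\ref{lem: weak_locallaw_thdlemma}.
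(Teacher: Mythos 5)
Your proof is correct and follows essentially the same route as the paper: the paper disposes of this lemma by noting it is the standard argument of Lemma \ref{lem: weak_locallaw_thdlemma} transplanted to the $o$-quantities (via the bound on $\|\mathcal{G}_o^{(i)}\Sigma\|_F$ from Lemma \ref{lem: weak_firstlemma_phi<1}), which is exactly the identity $(G_o)_{ii}-(G_o)_{jj}=(G_o)_{ii}(G_o)_{jj}((G_o)_{jj}^{-1}-(G_o)_{ii}^{-1})$ plus the $Z$-fluctuation/trace decomposition you use, with the same $\Psi_o+\Psi_o^2\prec\Psi_o$ conclusion.
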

\begin{proof}
    The proof of the above two lemmas is a standard one, which directly follows from the bound for $\|\mathcal{G}_o^{(i)}\Sigma\|_F$ in the proof of Lemma \ref{lem: weak_firstlemma_phi<1}. One can refer to \cite{Wen2021} for more details.
\end{proof}

Combining the above results, we have the following weak local law.
\begin{lemma}[Weak local law]\label{lem: weak_phi<1}
    Suppose Assumption \ref{assum_summary} holds. Then we have $\Lambda_o(z_o)\prec (N\eta_o)^{-1/4}$ uniformly for $z_o\in\mathbf{D}_o$. 
\end{lemma}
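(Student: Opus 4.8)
The plan is to transcribe the proof of the weak local law for $\phi\gtrsim1$ (Lemma~\ref{lem: weak local law}) into the rescaled variables, working with $\mathcal{W}_o$, $G_o$, $m_{W_o}$ and $z_o\in\mathbf{D}_o$ throughout. Every ingredient of that proof is now available in its $o$-version: Lemmas~\ref{lem: weak_firstlemma_phi<1}, \ref{lem: weak_seclemma_phi<1} and \ref{lem: weak_thdlemma_phi<1} play the roles of Lemmas~\ref{lem: weak_locallaw_firstlemma}, \ref{lem: weak_locallaw_seclemma} and \ref{lem: weak_locallaw_thdlemma}; the square-root behaviour of Theorem~\ref{thm: asym_laws}(ii) replaces part~(i); and the stability estimate of Proposition~\ref{prop: stability of f} can be invoked without change, since $f(m_{W_o})-z_o=f(\phi^{-1/2}m_W)-\phi^{1/2}z$ and $z\in\mathbf{D}$ whenever $z_o\in\mathbf{D}_o=\phi^{1/2}\mathbf{D}$.

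Concretely, I would first derive the approximate self-consistent equation for $m_{W_o}$. On the event $\Xi_o$ (or when $\eta_o\ge1$), Lemma~\ref{lem: weak_thdlemma_phi<1} gives $\frac1N\sum_i\frac{1}{(G_o)_{ii}}-\frac{1}{m_{W_o}}=\rO_{\prec}(\Psi_o^{2})$; expanding $\frac{1}{(G_o)_{ii}}=-z_o-z_o(\mathbf{y}_i^o)^{*}\mathcal{G}_o^{(i)}\mathbf{y}_i^o$, controlling $(Z_o)_i$ by Lemma~\ref{lem: weak_firstlemma_phi<1}, replacing $N^{-1}\operatorname{tr}(\mathcal{G}_o^{(i)}\Sigma)$ by its deterministic counterpart via Lemma~\ref{lem: weak_seclemma_phi<1}, and rewriting the resulting sum over the eigenvalues of $\Sigma$ as $\phi\int(\,\cdot\,)\pi(\mathrm{d}x)$, this collapses to
\begin{equation*}
f(m_{W_o})=z_o+\rO_{\prec}(\Psi_o),
\end{equation*}
so that $m_{W_o}$ solves the defining equation of $m_{1o}$ up to an error $\Psi_o$. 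Feeding this into Proposition~\ref{prop: stability of f} (after the usual step of upgrading the $\prec$-bound to a high-probability, Lipschitz-regularised deterministic bound $\delta$) and simplifying the resulting estimate with Theorem~\ref{thm: asym_laws}(ii), one obtains a bound of the form $\Theta_o=|m_{W_o}-m_{1o}|\prec\Psi_o^{1/2}$; combined with Lemmas~\ref{lem: weak_firstlemma_phi<1} and \ref{lem: weak_thdlemma_phi<1} this yields $\{\mathbf{1}(\Xi_o)+\mathbf{1}(\eta_o\ge1)\}\Lambda_o\prec(N\eta_o)^{-1/4}$, and in particular $\Lambda_o$ is controlled (by a power of $N$ better than $(N\eta_o)^{-1/4}$) on the top scale $\eta_o\asymp1$ of $\mathbf{D}_o$. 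Finally I would run the standard continuity/bootstrap argument of \cite{Wen2021}: $\Lambda_o$ is Lipschitz in $z_o$ with a polynomial constant, and on $\mathbf{D}_o$ one has the dichotomy that either $\Lambda_o\prec(N\eta_o)^{-1/4}$ or $\Lambda_o\ge(\log N)^{-1}$ (the content of the event $\Xi_o$ together with the self-consistent equation and the stability estimate), so decreasing $\eta_o$ in steps of size $\sim N^{-3}$ from the anchor scale $\eta_o\asymp1$ propagates the first alternative down to all of $\mathbf{D}_o$.

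The main obstacle is bookkeeping rather than conceptual. Because the relevant scales now carry fractional powers of $\phi$ — $\operatorname{Im}m_{1o}\asymp\phi^{1/4}$, $\eta=\phi^{-1/2}\eta_o$ may be as large as $\phi^{-1/2}$ even though $\eta_o\lesssim1$ on $\mathbf{D}_o$, $\Psi=\phi^{1/2}\Psi_o$, and the upper edge of $\mathbf{D}_o$ sits at $\eta_o\asymp1$ rather than at a constant bounded away from $1$ — one must check with care that each application of Proposition~\ref{prop: stability of f} and each round of the self-improving iteration produces a genuine gain in the exponent and that no hidden negative power of $\phi$ survives; since $\kappa+\eta$ behaves very differently near the spectral edge and deep in the bulk, that is where the care is concentrated.
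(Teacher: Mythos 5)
Your proposal follows essentially the same route as the paper: it combines Lemmas \ref{lem: weak_firstlemma_phi<1}--\ref{lem: weak_thdlemma_phi<1} to get the approximate self-consistent equation $f(m_{W_o})=z_o+\rO_{\prec}(\Psi_o)$, invokes the stability of $f$ (Proposition \ref{prop: stability of f}) to conclude $|m_{W_o}-m_{1o}|\prec\Psi_o^{1/2}$, and finishes with the standard bootstrap from the large-$\eta_o$ anchor, which is exactly the paper's (much terser) argument. Your additional attention to the $\phi$-dependent scalings is sensible but does not change the substance of the proof.
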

\begin{proof}
    Lemmas \ref{lem: weak_firstlemma_phi<1}-\ref{lem: weak_thdlemma_phi<1} imply that 
    \begin{equation}
        \{\mathbf{1}(\Xi_o)+\mathbf{1}(\eta_o\ge 1)\}\Big(\frac{1}{m_{W_o}}+z_o-\frac{\phi}{N}\sum_{i\in\mathcal{I}}\frac{\sigma_i}{1+m_{W_o}\sigma_i}\Big)\prec\Psi_o.
    \end{equation}
    Then by the stability of $f(x)$, one has
    \begin{equation}
        \{\mathbf{1}(\Xi_o)+\mathbf{1}(\eta_o\ge 1)\}|m_{W_o}(z_o)-m_{1o}(z_o)|\prec \Psi_o^{1/2}.
    \end{equation}
    Then, after deploying a standard bootstrapping argument, this lemma follows.
\end{proof}

Now we state the local laws for $\mathcal{W}_o$,
\begin{theorem}
    Suppose Assumption \ref{assum_summary} holds. When $N$ is sufficiently large, we have uniformly for $z_o\in\mathbf{D}_o$,
    \begin{gather}
        \Lambda_o\prec\sqrt{\frac{\operatorname{Im}m_{1o}(z_o)}{N\eta_o}}+\frac{1}{N\eta_o},\\
        |m_{W_o}(z_o)-m_{1o}(z_o)|\prec\frac{1}{N\eta_o}.
    \end{gather}
\end{theorem}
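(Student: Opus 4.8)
The plan is to mirror the argument already carried out for the case $\phi\gtrsim1$ in Subsection \ref{subsec: case phige1}, transported to the $o$-scale. Most of the groundwork is done: Lemmas \ref{lem: weak_firstlemma_phi<1}--\ref{lem: weak_phi<1} give exactly the analogues of Lemmas \ref{lem: weak_locallaw_firstlemma}--\ref{lem: weak local law}, so we already know $\Xi_o$ holds with high probability and may drop the indicator $\mathbf{1}(\Xi_o)$. The remaining task is the bootstrapping (self-improving) step that upgrades the weak bound $\Theta_o\prec(N\eta_o)^{-1/4}$ to $\Theta_o\prec(N\eta_o)^{-1}$.

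First I would establish the $o$-version of the fluctuation averaging estimate: with $\mathscr R_i^o:=(\mathbf y_i^o)^{*}\mathcal G_o^{(i)}\Sigma(m_{W_o}^{(i)}\Sigma+I)^{-1}\mathbf y_i^o$ and $\Phi_{o,\nu}:=\sqrt{(\operatorname{Im}m_{1o}(z_o)+(N\eta_o)^{-\nu})/(N\eta_o)}+1/(N\eta_o)$, one has $N^{-1}\sum_i(1-\mathbb E_i)G_{o,ii}^{-1}\prec\Phi_{o,\nu}^2$ and $N^{-1}\sum_i(1-\mathbb E_i)\mathscr R_i^o\prec\Phi_{o,\nu}^2$, whenever $\Theta_o\prec(N\eta_o)^{-\nu}$; this is Proposition \ref{prop: fluctuation averaging} rewritten with $Z$-normalization $\mathbb E|z_{ij}|^2=1/N$ in place of $1/\sqrt{MN}$, and the proof is identical (one can cite \cite{Wen2021}). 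Combining this with Lemmas \ref{lem: weak_firstlemma_phi<1}--\ref{lem: weak_thdlemma_phi<1} and the decomposition of $\frac1N\operatorname{tr}(\mathcal G_o^{(i)}\Sigma)-\frac1N\operatorname{tr}((-z_om_{W_o}\Sigma-z_oI)^{-1}\Sigma)$ into the $R_i^o$-term plus a $(\mathcal G_o^{(i)}-\mathcal G_o)$-correction, exactly as in \eqref{eq_locallaw_decomp}, yields the self-consistent equation with improved error
\begin{equation*}
\Bigl|f(m_{W_o})-z_o\Bigr|\prec\Phi_{o,\nu}^2\prec\frac{1}{(N\eta_o)^{\nu+1}}+\frac{\operatorname{Im}m_{1o}(z_o)}{N\eta_o}.
\end{equation*}
Here I would use that $f(x)$ is the same function as before, so the stability statement Proposition \ref{prop: stability of f} applies on $\mathbf D_o$ after rescaling; the key is that $m_{1o}$ satisfies $\widetilde z=f(m_{1o})$ and, by Theorem \ref{thm: asym_laws}(ii)--(iii), $m_{1o}+\sigma_k^{-1}\asymp\phi^{1/2}$ and $|1+m_{1o}\sigma_i|\gtrsim1$ (note $m_1=\phi^{1/2}m_{1o}$), which furnishes the denominator control $\sqrt{\kappa_o+\eta_o}+\sqrt{\delta}$ needed to invert the quadratic stability bound. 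Feeding the displayed estimate into stability gives $\Theta_o\prec(N\eta_o)^{-(\nu+1)/2}$, hence $\Lambda_o\prec\sqrt{\operatorname{Im}m_{1o}/(N\eta_o)}+(N\eta_o)^{-(\nu+1)/2}$, and iterating from $\nu=1/4$ a finite number of times drives the exponent to $1$, completing the proof. Finally, transferring back via $\mathcal G(z)=\phi^{1/2}\mathcal G_o(z_o)$, $m_W=\phi^{1/2}m_{W_o}$, $\eta=\phi^{-1/2}\eta_o$ recovers Theorem \ref{thm: locallaw_phi>1} in the regime $\phi\ll1$, since $N\eta=N\eta_o/\phi^{1/2}$ and the $\phi^{1/2}$ prefactors match on both sides.

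The main obstacle I anticipate is \emph{not} the algebra of the bootstrap but verifying that the stability of $f$ (Proposition \ref{prop: stability of f}) behaves correctly in the $\phi\ll1$ regime, where the relevant edges $R_k,L_k$ sit at distance $\sim\phi^{1/2}$ from the sticking points $-\sigma_k^{-1}$ and $f''$ at the critical points is of order $\phi^{-1/2}$ rather than $1$ (Remark \ref{rem: regular edges}). One must track these $\phi$-powers through the square-root expansion $m_{1o}-x_k\asymp\phi^{3/4}\sqrt{\phi^{1/2}z-R_k}$ from Theorem \ref{thm: asym_laws} to be sure that the factor $\phi^{-1/2}$ appearing in the stability bound is precisely what is absorbed by the rescaling $\eta_o=\phi^{1/2}\eta$, so that the final error is genuinely $1/(N\eta_o)$ with no residual powers of $\phi$. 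A secondary point requiring care is that $\mathbf D_o$ is defined with upper cutoff $\eta_o\le c^{-1}\phi^{1/2}(1+\phi^{-1/2})\asymp1$, so the region $\mathbf 1(\eta_o\ge1)$ that handles large spectral resolution is nonempty and must be treated by the same Lemma \ref{lem: basictool_diffbounds}-based argument used in the $\phi\gtrsim1$ case; this is routine but should be stated for completeness.
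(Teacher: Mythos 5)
Your proposal is correct and follows essentially the same route as the paper: the paper's own proof of this theorem is precisely the statement that one repeats the $\phi\gtrsim1$ bootstrap (weak law, $o$-scaled fluctuation averaging as in Proposition \ref{prop: fluctuation averaging}, stability of $f$, finite iteration from $\nu=1/4$), deferring details to \cite{Wen2021}, and then rescales back via $\eta_o=\phi^{1/2}\eta$ and $m_{1o}(z_o)=\phi^{-1/2}m_1(z)$ exactly as you describe. Your added caution about tracking the $\phi$-powers in the square-root behavior and about the $\mathbf 1(\eta_o\ge1)$ regime is consistent with, and if anything more explicit than, the paper's treatment.
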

\begin{proof}
    The proof of this theorem consists of a standard argument as the one in case $\phi \gtrsim 1$ based on the fluctuation averaging properties as in Proposition \ref{prop: fluctuation averaging}. One may refer to \cite{Wen2021} for more details.
\end{proof}

Finally, according to the relationship between $\eta_o$ and $\eta$, i.e. \eqref{etao}, and the relationship between $m_{1o}(z_o)$ and $m_1(z)$, i.e. \eqref{m1ozo}, the eigenvalues of $W_o$ and $W$, we obtain uniformly for $z\in\mathbf{D}$,
    \begin{gather*}
    \Lambda\prec\phi^{1/2}\Big(\sqrt{\frac{\operatorname{Im}m_{1o}(z_o)}{N\eta_o}}+\frac{1}{N\eta_o}\Big)=\sqrt{\frac{\phi^{1/2}\operatorname{Im}m_{1}(z)}{N\eta_o}}+\frac{\phi^{1/2}}{N\eta_o}=\sqrt{\frac{\operatorname{Im}m_{1}(z)}{N\eta}}+\frac{1}{N\eta},\\
        |m_{W}(z)-m_{1}(z)|\prec\frac{\phi^{1/2}}{N\eta_o}=\frac{1}{N\eta}.
    \end{gather*}
Together with the case $\phi\gtrsim 1$, we finish the proof of Theorem \ref{thm: locallaw_phi>1}.

\section{Proofs of Theorem \ref{thm: Rigidity} and Theorem \ref{thm: Local law outside the spectrum}}\label{Th2&3}
\subsection{Proof of Theorem \ref{thm: Rigidity}}
We first show that there is no eigenvalue outside the spectrum with high probability, i.e.
\begin{equation}\label{eq:rigidity of largest eigenvalue}
    \lambda_{1,1}=\lambda_1\le R+\mathrm{O}_{\prec}(N^{-2/3}).
\end{equation}

Rewrite \eqref{rigidi} without scale $\phi^{-1/2}$, we obtain
\begin{equation}
    |f(m_W)-z| \prec \big(\frac{1}{(N\eta)^{2}}+\frac{\operatorname{Im}m_1(z)}{N\eta}\big).
\end{equation}
uniformly for $z\in\mathbf{D}$ when $\phi\gtrsim 1$ or $\phi \ll 1$, respectively. In the sequel, we only give the details for the case $\phi\gtrsim 1$ , while the case $\phi\ll1$ follows from similar argument.

Now, we obtain from Proposition \ref{prop: stability of f} that, for any
$\varepsilon,D>0$, as $N$ is sufficiently large,
\begin{multline*}
	\sup_{z\in\mathbf{D}}\mathbb{P}\Big(|m_{W}-m_1|>\frac{N^{\varepsilon}}{\sqrt{\kappa+\eta}}(\frac{\operatorname{Im} m_1}{N\eta}+\frac{1}{(N\eta)^{2}})\Big)\\
	\le\sup_{z\in\mathbf{D}}\mathbb{P}\Big(|m_{W}-m_1|>\frac{N^{\varepsilon}(\frac{\operatorname{Im} m_1}{N\eta}+\frac{1}{(N\eta)^{2}})}{\sqrt{\kappa+\eta}+\sqrt{N^{\varepsilon/2}(\frac{\operatorname{Im} m_1}{N\eta}+\frac{1}{(N\eta)^{2}})}}\Big)\le N^{-D},
\end{multline*}
so uniformly for $z\in\mathbf{D}$,
\begin{equation}
	|m_{W}-m_1|\prec\frac{1}{\sqrt{\kappa+\eta}}(\frac{\operatorname{Im} m_1}{N\eta}+\frac{1}{(N\eta)^{2}}).\label{eq:improved bound of m-m}
\end{equation}

Since we already know that $\lambda_{1}\le \phi+C_1\phi^{1/2}$ with high probability for some $C_1>0$. Therefore, in order to show \eqref{eq:rigidity of largest eigenvalue},
it remains to show that for any fixed $\varepsilon>0$, there is no
eigenvalue of $W$ in the interval
\begin{equation}
	\mathbf{I}:=[R+N^{-2/3+4\varepsilon},\phi+C_1\phi^{1/2}]\label{eq:definition of I}
\end{equation}
with high probability. The idea of the proof is to choose, for each
$E\in\mathbf{I}$, a scale $\eta(E)$ such that $\operatorname{Im} m_{W}(E+\mathrm{i}\eta(E))\le\frac{N^{-\varepsilon}}{N\eta(E)}$
with high probability.

Let $\varepsilon$ be as in (\ref{eq:definition of I}). Then we observe
from (\ref{eq:improved bound of m-m}) and Lemma \ref{thm: asym_laws} that
\begin{equation}
	\bigcap_{z\in\mathbf{D},E\ge R}\Big\{|m_{W}(z)-m_1(z)|\le N^{\varepsilon}\Big(\frac{\eta}{\kappa}\frac{1}{N\eta}+\frac{1}{\sqrt{\kappa}}(\frac{1}{(N\eta)^{2}})\Big)\Big\}\label{eq:proof of rigidity bound 1}
\end{equation}
holds with high probability.

For each $E\in\mathbf{I}$, we define
\begin{equation}
	\eta(E)=N^{-1/2-\varepsilon}\kappa(E)^{1/4},\qquad z(E)=E+\mathrm{i}\eta(E).\label{eq:choice of eta(E)}
\end{equation}
One may see that we still have $z\in\mathbf{D}$.
			
Using Lemma \ref{thm: asym_laws}, we find that for all $E\in\mathbf{I}$
\begin{equation}
	\operatorname{Im} m_1(z(E))\le\frac{\eta(E)}{\sqrt{\kappa(E)}}\le\frac{N^{-\varepsilon}}{N\eta(E)}.\label{eq:rigidity Im m bound}
\end{equation}

With the choice $\eta(E)$ in (\ref{eq:choice of eta(E)}), we obtain
from (\ref{eq:proof of rigidity bound 1}) that
\begin{equation}
	\bigcap_{E\in\mathbf{I}}\Big\{|m_{W}(z)-m_1(z)|\le\frac{2N^{-\varepsilon}}{N\eta(E)}\Big\}\text{ holds with high probability}.\label{eq:rigidity mN-m bound}
\end{equation}
			
From (\ref{eq:rigidity Im m bound}) and (\ref{eq:rigidity mN-m bound})
we conclude that
\begin{equation}
	\bigcap_{E\in\mathbf{I}}\Big\{\operatorname{Im} m_{W}(z)\le\frac{3N^{-\varepsilon}}{N\eta(E)}\Big\}\text{ holds with high probability}.\label{eq:proof of rigidity intersection bounds of Imm}
\end{equation}
Now suppose that there is an eigenvalue, say $\lambda_{i}$ of $W$
in $\mathbf{I}$. Then we find that
	\[
	    \operatorname{Im} m_{W}(z(\lambda_{i}))=\frac{1}{N}\sum_{j}\frac{\eta(\lambda_{i})}{(\lambda_{j}-\lambda_{i})^{2}+\eta(\lambda_{i})^{2}}\ge\frac{1}{N\eta(\lambda_{i})},
	\]
which contradicts with the inequality in (\ref{eq:proof of rigidity intersection bounds of Imm}).
Therefore, we conclude that with high probability, there is no eigenvalue
in $\mathbf{I}$. Since $\varepsilon>0$ in (\ref{eq:definition of I})
is arbitrary, (\ref{eq:rigidity of largest eigenvalue}) follows. We remark that in the case $\phi\ll1$, we consider the matrix $W_o$ instead, which gives the right scaling for $z_o$. Then, repeating the above argument, we obtain the parallel result for \eqref{eq:proof of rigidity intersection bounds of Imm} that $\operatorname{Im}m_{W_o}(z_o)\prec \frac{N^{1/2+\epsilon}}{N}$. On the other hand, we have $\operatorname{Im}m_{W_o}(z_o)\ge \frac{N^{1/2+1/8+\epsilon}}{N}$. Therefore, there is still a contradiction.

Now we turn to show \eqref{eig rigidity}. Define $\mathfrak{n}_{N}(a,b)=\int_{a}^{b}\rho_{W}({\rm d}x)$
		and $\mathfrak{n}(a,b)=\int_{a}^{b}\rho({\rm d}x)$ for any $a\le b\in\mathbb{R}$. It is easy to see that $\mathfrak{n}_{N}(a,b)$ and $\mathfrak{n}(a,b)$ count the number of sample eigenvalues and population eigenvalues in $[a,b]$, respectively. We need the following result
\begin{lemma}
			\label{lem:integral of rhoDelta bound}Let $a_{1},a_{2}$ be two numbers
			with $a_{1}\le a_{2}$ and $|a_{1}|+|a_{2}|=\mathrm{O}(1)$. For any $E_{1},E_{2}\in[a_{1},a_{2}]$
			and $\eta=N^{-1}$, let $\psi(\lambda):=\psi_{E_{1},E_{2},\eta}(\lambda)$
			be a $C^{2}(\mathbb{R})$ function such that $\psi(x)=1$ for $x\in[E_{1}+\eta,E_{2}-\eta]$,
			$\psi(x)=0$ for $x\in\mathbb{R}\backslash[E_{1},E_{2}]$ and the
			first two derivatives of $\psi$ satisfy $|\psi^{(1)}(x)|\le C\eta^{-1}$,
			$|\psi^{(2)}(x)|\le C\eta^{-2}$ for all $x\in\mathbb{R}$. Let $\varrho^{\Delta}$
			be a signed measure on the real line and $m^{\Delta}$ be the Stieltjes
			transform of $\varrho^{\Delta}$. Suppose, for some positive number
			$c_{N}$ depending on $N$, we have
			\begin{equation}
			|m^{\Delta}(x+\mathrm{i} y)|\le Cc_{N}(\frac{1}{Ny}+\frac{1}{\sqrt{\kappa+y}\cdot(Ny)^2}) \qquad\forall y<1,x\in[a_{1},a_{2}].\label{eq:mDelta bound}
			\end{equation}
			Then
			\begin{equation}
			\begin{split}
			\Big|\int\psi(\lambda)\varrho^{\Delta}(\mathrm{d}\lambda)\Big|&\leqslant c_N(\frac{1}{N}+\frac{4}{\sqrt{\kappa+1/2}\cdot N^2})\\
			&\leqslant c_N(\frac{1}{N}+\frac{\sqrt{E_2-E_1+\eta}}{N^2})\\
			&\leqslant c_N(\frac{1}{N}+\frac{\sqrt{\kappa_{E_1}}}{N^2}).
			\end{split}
			\end{equation}
		\end{lemma}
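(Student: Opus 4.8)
The plan is to use the Helffer--Sj\"ostrand formula to turn $\int\psi(\lambda)\,\varrho^\Delta(\mathrm d\lambda)$ into a two--dimensional integral involving $m^\Delta$, and then to bound that integral via \eqref{eq:mDelta bound} together with the square--root behaviour of $\operatorname{Im}m_1$ near the right edge $R$ from Theorem \ref{thm: asym_laws}.

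First I would fix an even cutoff $\chi\in C_c^\infty(\mathbb{R})$ with $\chi\equiv1$ on $[-\tfrac12,\tfrac12]$, $\chi\equiv0$ off $(-1,1)$, $|\chi'|\le C$, and introduce the almost--analytic extension $\widetilde\psi(x+\mathrm iy):=\big(\psi(x)+\mathrm iy\psi'(x)\big)\chi(y)$. Since $\psi\in C^2$ and $\psi$ is locally constant on the complement of the two short intervals $[E_1,E_1+\eta]$ and $[E_2-\eta,E_2]$, both $\psi'$ and $\psi''$ vanish at the four breakpoints. The Helffer--Sj\"ostrand identity then gives, for $\lambda\in\mathbb{R}$, $\psi(\lambda)=\tfrac1\pi\int_{\mathbb C}\bar\partial\widetilde\psi(w)(\lambda-w)^{-1}\,\mathrm d^2w$; integrating against $\varrho^\Delta$ and regularising by $|\operatorname{Im}w|>\epsilon$, $\epsilon\downarrow0$, I obtain
\[
\int\psi(\lambda)\,\varrho^\Delta(\mathrm d\lambda)=\frac1\pi\int_{\mathbb C}\bar\partial\widetilde\psi(w)\,m^\Delta(w)\,\mathrm d^2w,\qquad \bar\partial\widetilde\psi(x+\mathrm iy)=\frac{\mathrm i}2\Big(y\psi''(x)\chi(y)+\big(\psi(x)+\mathrm iy\psi'(x)\big)\chi'(y)\Big).
\]
This integral splits into the $\chi'$--part, supported on $\tfrac12\le|y|\le1$, and the $\chi$--part, supported on $\operatorname{supp}\psi''$, two intervals of total length $\le2\eta$ with $\|\psi''\|_\infty\le C\eta^{-2}$.

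The $\chi'$--part is routine: there $|m^\Delta|\lesssim c_N\big(N^{-1}+(\kappa+\tfrac12)^{-1/2}N^{-2}\big)$ by \eqref{eq:mDelta bound}, and $\int_{\mathbb{R}}(|\psi|+|y\psi'|)\,\mathrm dx\lesssim(E_2-E_1)+1=\mathrm O(1)$, so this part is already within the first asserted bound. For the $\chi$--part I would split the $y$--integral at $|y|=\eta=N^{-1}$. On $\eta\le|y|\le1$ I integrate by parts once in $x$ (no boundary terms, by the vanishing of $\psi',\psi''$ at the breakpoints), estimate $\partial_xm^\Delta$ by the Cauchy bound $|\partial_xm^\Delta(x+\mathrm iy)|\lesssim y^{-1}\sup_{|w-(x+\mathrm iy)|=y/2}|m^\Delta(w)|$, and feed in \eqref{eq:mDelta bound}; carrying out the remaining $\mathrm dy$--integral (using $\int_{\mathbb{R}}|\psi'|\lesssim1$) produces a bound of the order $c_N\big(N^{-1}+N^{-2}\int_\eta^1\tfrac{\mathrm dy}{\sqrt{\kappa+y}\,y^2}\big)$, from which the $\kappa$--sensitive second term in the statement is read off by the elementary size comparisons recorded there.

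The delicate region, and the one I expect to be the main obstacle, is $|y|<\eta$: there \eqref{eq:mDelta bound} alone does not help, because the pointwise bound is not integrable against $y\,\mathrm dy$ down to $y=0$ due to the $(Ny)^{-2}$ factor. The remedy is to pass to the imaginary part: using $m^\Delta(\bar w)=\overline{m^\Delta(w)}$, the $\chi$--part on $|y|<\eta$ equals $-\tfrac1\pi\int_{\operatorname{supp}\psi''}\psi''(x)\int_0^\eta y\operatorname{Im}m^\Delta(x+\mathrm iy)\,\mathrm dy\,\mathrm dx$; writing $\varrho^\Delta$ as the difference of the two measures being compared, so $m^\Delta=m_W-m_1$ (for a general signed measure one uses its Jordan decomposition together with the boundedness of the masses near $[E_1,E_2]$), I exploit that both $y\mapsto y\operatorname{Im}m_W(x+\mathrm iy)$ and $y\mapsto y\operatorname{Im}m_1(x+\mathrm iy)$ are nonnegative and nondecreasing, each being an average of $y^2/((\lambda-x)^2+y^2)$. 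Hence for $0<y\le\eta$ one has $y|\operatorname{Im}m^\Delta(x+\mathrm iy)|\le\eta\big(2\operatorname{Im}m_1(x+\mathrm i\eta)+|m^\Delta(x+\mathrm i\eta)|\big)$, and substituting $\|\psi''\|_\infty\,|\operatorname{supp}\psi''|\lesssim\eta^{-1}$, the edge estimate $\operatorname{Im}m_1(x+\mathrm i\eta)\lesssim\eta/\sqrt{\kappa+\eta}$ from Theorem \ref{thm: asym_laws} (valid since $x>R$), and \eqref{eq:mDelta bound} at $y=\eta=N^{-1}$, transfers the whole contribution to the scale $y=\eta$, where everything is controlled. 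Summing the three contributions yields the first displayed inequality, and the remaining two then follow from the stated elementary comparisons among $\kappa$, $E_2-E_1$, $\eta$ and $\kappa_{E_1}$. The genuinely nonroutine point is precisely this transfer of the small--$y$ piece to the scale $\eta$ by monotonicity, carried out carefully enough that the $\kappa$--dependence does not deteriorate.
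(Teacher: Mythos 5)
Your proposal is essentially the paper's own argument: the paper proves this lemma by simply invoking the Helffer--Sj\"ostrand formula for the signed measure $\varrho^{\Delta}$ and citing the standard treatment in \cite[Theorem 3.2]{Wen2021}, and your three-region decomposition (the $\chi'$ part, the $|y|\ge\eta$ part handled by moving a derivative onto $m^{\Delta}$, and the $|y|<\eta$ part transferred to scale $\eta$ via monotonicity of $y\operatorname{Im}m$ together with the square-root bound on $\operatorname{Im}m_1$) is exactly that standard argument. The only caveat is quantitative bookkeeping: your estimates produce a $\log N$ factor from $\int_\eta^1 (Ny)^{-1}\,\mathrm{d}y$ and a term of order $c_N/(N\sqrt{\kappa+\eta})$ at the scale $y=\eta$, which do not literally match the displayed inequality chain (itself stated rather loosely in the paper), but this slack is immaterial in the rigidity application where $c_N=N^{\tau}$ with $\tau$ arbitrary.
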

  \begin{proof}
      The proof of Lemma \ref{lem:integral of rhoDelta bound} is a standard one by applying the Helffer-Sj\"{o}strand formula on the signed measure $\varrho^{\Delta}$ with its Stieltjes transform of $\varrho^{\Delta}$. One may refer to the proof of \cite[Theorem 3.2]{Wen2021} for more details. We omit them for simplification.
  \end{proof}
  Adopt Lemma \ref{lem:integral of rhoDelta bound} with $\varrho^{\Delta}$ being the signed measure $\rho_W-\rho$ and notice that the scaling factor of $\rho_W$ is $\sqrt{MN}$. Since the typical order of the eigenvalues of $W$ is $\mathrm{O}(\phi^{1/2})$, one may gradually change the interval in any integrals involving $\rho_W$ as
  \begin{gather*}
      \int_a^b\rho_W(\mathrm{d}\lambda)=\int_{\phi^{-1/2}a}^{\phi^{-1/2}b}\phi^{1/2}\rho_W(\mathrm{d}(\phi^{-1/2}\lambda))
  \end{gather*}
  to rescale the interval $[a,b]$ to constant order. One may observe that this makes the scaling factor of $\phi^{1/2}\rho_W$ becoming $N^{-1}$. A similar pattern will be followed by $\rho$. In the sequel, we will repeatedly use this rescaled integral without further illustration. Suppose $y\ge y_{0}=N^{-1+\tau}$ for some small constant $\tau>0$ (Since at this time $K=N$, we may choose such $y_0$ in $\mathbf{D}$). By Theorem \ref{thm: locallaw_phi>1}, one has the condition in Lemma \ref{lem:integral of rhoDelta bound} holds with high probability for the difference $m^{\Delta}=m_W-m_1$ and $c_{N}=N^{\varepsilon}$ for any small $\varepsilon>0$. For $y\le y_{0}$, set $z=x+\mathrm{i} y$, $z_{0}=x+\mathrm{i} y_{0}$ and
		estimate
		\begin{equation}
		|m_{W}(z)-m_1(z)|\le|m_{W}(z_{0})-m_1(z_{0})|+\int_{y}^{y_{0}}\Big|\frac{\partial}{\partial\eta}\{m_{W}(x+\mathrm{i}\eta)-m_1(x+\mathrm{i}\eta)\}\Big|\mathrm{d}\eta.\label{eq:bound of m_N-m by int_y^y0}
		\end{equation}

		Note that
		\begin{multline*}
		\Big|\frac{\partial}{\partial\eta}m_{W}(x+\mathrm{i}\eta)\Big|=\Big|\frac{\partial}{\partial\eta}\int\frac{1}{\lambda-x-\mathrm{i}\eta}\rho_W({\rm d}\lambda)\Big|\\
		\le\int\frac{1}{|\lambda-x-\mathrm{i}\eta|^{2}}\rho_W({\rm d}\lambda)=\eta^{-1}\operatorname{Im} m_{W}(x+\mathrm{i}\eta).
		\end{multline*}
		The same bound applies to $|\frac{\partial}{\partial\eta}m_1(x+\mathrm{i}\eta)|$
		with $m_{W}$ replaced by $m_1$.

  Using Theorem \ref{thm: locallaw_phi>1} and the fact that the
		functions $y\to y\operatorname{Im} m_{W}(x+\mathrm{i} y)$ and $y\to y\operatorname{Im} m_1(x+\mathrm{i} y)$
		are both increasing for $y>0$ since both are Stieltjes
		transforms of a positive measure, we obtain that
		\begin{equation*}
		    \begin{split}
		   &\int_{y}^{y_{0}}\Big|\frac{\partial}{\partial\eta}\{m_{W}(x+\mathrm{i}\eta)-m_1(x+\mathrm{i}\eta)\}\Big|\mathrm{d}\eta \\
          &\int_{y}^{y_{0}}\frac{1}{\eta}\{\operatorname{Im} m_{W}(x+\mathrm{i}\eta)+\operatorname{Im} m_1(x+\mathrm{i}\eta)\}\mathrm{d}\eta\\
			& \le y_{0}\{\operatorname{Im} m_{W}(z_{0})+\operatorname{Im} m_1(z_{0})\}\int_{y}^{y_{0}}\frac{1}{\eta^{2}}\mathrm{d}\eta\\
			& = y_{0}\{\operatorname{Im} m_{W}(z_{0})+\operatorname{Im} m_1(z_{0})\}(\frac{1}{y}-\frac{1}{y_{0}})\\
			& = \{\operatorname{Im} m_{W}(z_{0})+\operatorname{Im} m_1(z_{0})\}\frac{y_{0}-y}{y}\\
			& \prec 2\operatorname{Im} m_1(z_{0})+(Ny_{0})^{-1}.
		    \end{split}
		\end{equation*}
	
		Hence we have from (\ref{eq:bound of m_N-m by int_y^y0}) that
		\begin{equation}
		|m_{W}(z)-m_1(z)|\prec2\operatorname{Im} m_1(z_{0})+(Ny_{0})^{-1}\le\frac{CNy+1}{Ny}\le\frac{CN^{\tau}}{Ny}.\label{eq:check the bound for lemma below}
		\end{equation}

    Let $\psi_{E_{1},E_{2},\eta}$ be the function in Lemma \ref{lem:integral of rhoDelta bound}.
		Applying Lemma \ref{lem:integral of rhoDelta bound} with $c_{N}=N^{\tau}$,
		we obtain that for any $\eta=N^{-1}$
		\[
		\Big|\int_{\mathbb{R}}\psi_{E_{1},E_{2},\eta}(\lambda)\rho_{W}(\mathrm{d}\lambda)-\int_{\mathbb{R}}\psi_{E_{1},E_{2},\eta}(\lambda)\rho(\mathrm{d}\lambda)\Big|\prec N^{-1+\tau}.
		\]
		
		Integrating with respect to $\rho_{W}({\rm d}\lambda)$ and $\rho({\rm d}\lambda)$
		both sides of the following elementary inequality
		\[
		\mathbf{1}_{[x-\eta,x+\eta]}(\lambda)\le\frac{2\eta^{2}}{(\lambda-x)^{2}+\eta^{2}}\quad\forall x,\lambda\in\mathbb{R},
		\]
		and using (\ref{eq:check the bound for lemma below}), Theorem \ref{thm: asym_laws}
		and the definitions of $y_{0}$, we get that for some constant $C>0$
		\[
		\mathfrak{n}_{N}(x-\eta,x+\eta)\le C\eta\operatorname{Im} m_{W}(x+\mathrm{i}\eta)\le Cy_{0}\operatorname{Im} m_W(x+\mathrm{i} y_{0})\prec N^{-1+\tau},
		\]
		and
		\[
		\mathfrak{n}(x-\eta,x+\eta)\le C\eta\operatorname{Im} m_1(x+\mathrm{i}\eta)\le Cy_{0}\operatorname{Im} m_1(x+\mathrm{i} y_{0})\prec N^{-1+\tau},
		\]
		uniformly for $x$ in a small neighborhood of $R$. Then, we obtain that
		\begin{eqnarray*}
    			|\mathfrak{n}_{N}(E_{1},E_{2})-\mathfrak{n}(E_{1},E_{2})| & = & |\int_{E_{1}}^{E_{2}}\rho_{W}({\rm d}\lambda)-\int_{E_{1}}^{E_{2}}\rho({\rm d}\lambda)|\\
			& \le & |\int_{E_{1}+\eta}^{E_{2}-\eta}\psi_{E_{1},E_{2},\eta}(\lambda)\rho_{W}({\rm d}\lambda)-\int_{E_{1}+\eta}^{E_{2}-\eta}\psi_{E_{1},E_{2},\eta}(\lambda)\rho({\rm d}\lambda)|\\
			&  & +|\int_{E_{1}}^{E_{1}+\eta}\rho_{W}({\rm d}\lambda)|+|\int_{E_{1}}^{E_{1}+\eta}\rho({\rm d}\lambda)|\\
			&  & +|\int_{E_{2}-\eta}^{E_{2}}\rho_{W}({\rm d}\lambda)|+|\int_{E_{2}-\eta}^{E_{2}}\rho({\rm d}\lambda)|\\
			& \prec & \frac{1}{N^{1-\tau}}+\frac{1}{N^{1-\tau}}(\sqrt{\kappa_{E_1}}-\sqrt{\kappa_{E_2}}).
		\end{eqnarray*}
    Notice that $\tau>0$ is arbitrary, we have that if $\lambda_{r,i},\gamma_{r,i}\geqslant R-N^{c}N^{-2/3}$ for some $c>0$, then, with high probability
			\begin{equation}\label{eq:x near the edge}
			|\lambda_{r,i}-\gamma_{r,i}|\leqslant N^{-\epsilon}N^{-2/3},
			\end{equation}
			for some $\epsilon>0$. By the square root behavior of $\rho$, we have $\mathfrak{n}(x)\sim(\lambda_{1,1}-x)^{3/2}$ when $x$ is near the edge. That is
			\[
			\mathfrak{n}(\gamma_{r,i})=\frac{i}{N}\sim(\lambda_{1,1}-\gamma_{r,i})^{3/2}.
			\]
			
			Thus, we have proved the case for $\lambda_{r,i}$ near the edge. Together with (\ref{eq:x near the edge}), we conclude (\ref{eig rigidity}). For the case where $R-\lambda_{r,i}>N^cN^{-2/3}$ and $R-\gamma_{r,i}>N^cN^{-2/3}$, by the definition of $\mathfrak{n}$, one may check that for sufficient large $E_3>R$,
   \begin{equation*}
       \mathfrak{n}(\gamma_{r,i},E_3)\vee\mathfrak{n}(\lambda_{r,i},E_3)\gtrsim (N^cN^{-2/3})^{3/2}\ge \frac{N^{c_1}}{N},
   \end{equation*}
   for some $c_1=3c/2>\tau$. Then by the definition of $\rho_W$ and $\gamma_{r,i}$, one has
   \begin{equation}\label{eq_eigenvaluerigidity_approximation_n}
       \frac{i}{N}=\mathfrak{n}(\gamma_{r,i},E_3)+\mathrm{O}(\frac{1}{N})=\mathfrak{n}_N(\lambda_{r,i},E_3)=\mathfrak{n}(\lambda_{r,i},E_3)+\mathrm{O}(\frac{N^{\tau}}{N}).
   \end{equation}
   So, 
   \begin{equation}
       \mathfrak{n}(\gamma_{r,i},E_3)=\mathfrak{n}(\lambda_{r,i},E_3)(1+\mathrm{O}(N^{-c/2}))
   \end{equation}
   with high probability. Again, by the square root behavior of $\rho$, one has $\mathfrak{n}(x,E_3)\asymp(R-z)^{3/2}$. Then we deduce that $R-\lambda_{r,i}\asymp R-\gamma_{r,i}$ with high probability. Moreover, one may check that $\partial\mathfrak{n}(\lambda_{r,i},E_3)/\partial \lambda_{r,i}\asymp \partial\mathfrak{n}(\gamma_{r,i},E_3)/\partial \gamma_{r,i}$ with $\partial\mathfrak{n}(x,E_3)/\partial x\asymp (R-x)^{1/2}$. Then it follows from the mean value theorem and \eqref{eq_eigenvaluerigidity_approximation_n} that 
   \begin{equation*}
       |\lambda_{r,i}-\gamma_{r,i}|\asymp\frac{|\mathfrak{n}(\lambda_{r,i},E_3)-\mathfrak{n}(\gamma_{r,i},E_3)|}{|\partial\mathfrak{n}(\gamma_{r,i},E_3)/\partial \gamma_{r,i}|}\le C\frac{N^{\tau}}{N}(\frac{i}{N})^{-1/3}=C N^{\tau}N^{-2/3}i^{-1/3}
   \end{equation*}
   with high probability. Then, the result for Theorem \ref{thm: Rigidity} follows by carefully figuring out the order of $i$ in each bulk component.

For the case that $\phi\ll1$, one should notice that the typical order of the eigenvalues of $W$ is of $\mathrm{O}(\phi^{-1/2})$. Then, one should rescale the integral involving $\rho_W$ and $\rho$ as 
\begin{equation*}
    \int_a^b\rho_W(\mathrm{d}\lambda)=\int_{\phi^{1/2}a}^{\phi^{1/2}b}\phi^{-1/2}\rho_W(\mathrm{d}(\phi^{1/2}\lambda)),
\end{equation*}
which results in the scaling factor in $\phi^{-1/2}\rho_W$ being $M^{-1}$.

\subsection{Proof of Theorem \ref{thm: Local law outside the spectrum}}
From eigenvalue rigidity (Theorem \ref{thm: Rigidity}), we have 
\begin{equation}
    |\lambda_{k,i}-\gamma_{k,i}|\prec K^{-2/3}\cdot i^{-1/3},
\end{equation}
for some $i=1,\dots,N_l$ satisfying $R-c\le \gamma_{l,i}$. With the convention $\gamma_{1,0}=R$, we may write
\begin{equation}
    m_{W}(z)-m_1(z)=\sum_{k=1}^p\sum_{i=1}^{N_l}\int_{\gamma_{k,i}}^{\gamma_{k,i-1}}\rho(\mathrm{d}x)\Big(\frac{1}{\lambda_i-z}-\frac{1}{x-z}\Big).
\end{equation}
We find that for $x\in[\gamma_{k,i},\gamma_{k,i-1}]$
\begin{equation}
    |\lambda_{k,i}-\gamma_{k,i}|+|x-\gamma_{k,i}|\prec K^{-2/3}\cdot i^{-1/3}
\end{equation}
with high probability. Since $z\in\mathbf{D}_{os}$, $|z-\gamma_{k,i}|\ge K^{-2/3+\delta}$ for all $i$. Besides, $\epsilon$ can be made sufficiently small, so we have 
\begin{equation*}
    |m_{W}(z)-m_1(z)|\prec \frac{1}{K}\sum_{i=1}^KK^{-2/3}\cdot i^{-1/3}\frac{1}{|\gamma_{k,i}-z|^2}.
\end{equation*}

By the definitions of $\gamma_{k,i}$'s and the square root decay for $\rho$ near the edge, we obtain that $|R-\gamma_{k,i}|\asymp (i/K)^{2/3}$ for $i\le N_l$. Then by $\sqrt{\kappa^2+\eta^2}\asymp\kappa+\eta$, $\kappa^2+\eta^2\asymp (\kappa+\eta)^2$, we conclude that 
\begin{equation*}
    |m_{W}(z)-m_1(z)|\prec \frac{1}{K}\frac{1}{(\kappa+\eta)+(\kappa+\eta)^2}.
\end{equation*}
The proof of Theorem \ref{thm: Local law outside the spectrum} is finished.

\section{Application to spiked covariance model}\label{sec: Application}

\subsection{Estimation of spiked eigenvalues}
The spiked covariance model, originally introduced by \cite{Johnstone01}, assumes that the spectrum of the covariance matrix $\Sigma$ forms several separate groups, i.e.,
\begin{align}\label{spike-model}
 {\rm Spec}(\Sigma)=\bigg(\underbrace{\alpha_{1},\ldots,\alpha_{1}}_{q_1}, \ldots, \underbrace{\alpha_{L},\ldots,\alpha_{L}}_{q_L},\ \underbrace{\beta_{1},\ldots,\beta_{M-\mathcal L}}_{M-\mathcal L}\bigg),\quad \mathcal L=\sum_{\ell=1}^L q_\ell.
\end{align}
In this spectrum, the eigenvalues $\alpha_1 > \alpha_2 > \cdots > \alpha_L$ are referred to as {\em spiked eigenvalues} with multiplicities $\{q_1,\ldots,q_L\}$, and the remaining ones $\{\beta_1,\ldots, \beta_{M-\mathcal L}\}$ are called {\em bulk eigenvalues}. To simplify the notation, we assume that the spikes are larger than the bulk eigenvalues, which can be easily extended to general situations.
%

Estimating the spiked eigenvalues $\{\alpha_1,\ldots,\alpha_L\}$ is one of the central inferential tasks for this model, which requires a spectrum separation condition described below.

{\bf Assumption (A4).}\label{ass_app_spikemodel} For the spiked covariance model \eqref{spike-model}, we assume that
$$
\liminf_N \min_{1\leq s\neq t\leq L} |\alpha_s-\alpha_t|>0\quad \text{and}\quad 
\liminf_N \min_{1\leq \ell\leq L} \psi_{\phi}^{\prime}(\alpha_\ell)> 0,
$$
where
$$
\psi_{\phi}(x)=\frac1{\sqrt{\phi}}x+\sqrt{\phi} x \int \frac{t}{x-t} d \pi_{L+1}(t),\ \pi_{L+1}=\frac{1}{M}\sum_{i=1}^{M-\mathcal L}\delta_{\beta_i} \text{ and }  x\notin {\rm Supp}(\pi_{L+1}).
$$ 
This assumption states that the spikes $\{\alpha_1,\ldots,\alpha_L\}$ must be distinguishable and be distant from the bulk eigenvalues, which we refer to as {\em distance spikes}.
In this context, the $L$ spikes of $\Sigma$ give rise to $L$ separate clusters of sample eigenvalues, forming a one-to-one correspondence, and they are also isolated from the bulk sample eigenvalues. See \cite{BSbook} for more details. 

 A small simulation is carried out to illustrate the spectral separation for $L=1$. We set the spectrum of $\Sigma$ to be Spec$(\Sigma)=\{4,1,\ldots,1\}$. The dimensional settings are $(M,N)=(400,40000)$, $(400,400)$, and $(40000,400)$, representing $\phi$ approaching $0$, a positive constant $\phi_\infty=1$, and $\infty$, respectively.
Notice that the derivative $\psi'_\phi(x)$ is 
$$
\psi_{\phi}^{\prime}(x)=\frac1{\sqrt{\phi}}-\frac{\sqrt{\phi}}{(x-1)^2}
,\quad \text{and thus}\quad \liminf_N\psi_{\phi}^{\prime}(4)>0,~ \text{if}~ \phi\to\phi_\infty \in [0,9).
$$
As $\phi$ approaches a limit within the range of $[0,9)$, a sample spike appears outside the bulk ones, as shown in (a) and (b) of Figure \ref{figmn}. However, as $\phi$ increases beyond 9, all the sample eigenvalues combine without a spike, as seen in (c) of Figure \ref{figmn}.

\begin{figure}[htbp]
\centering
\begin{minipage}{0.32\linewidth}
\vspace{0.4pt}
\includegraphics[width=\textwidth]{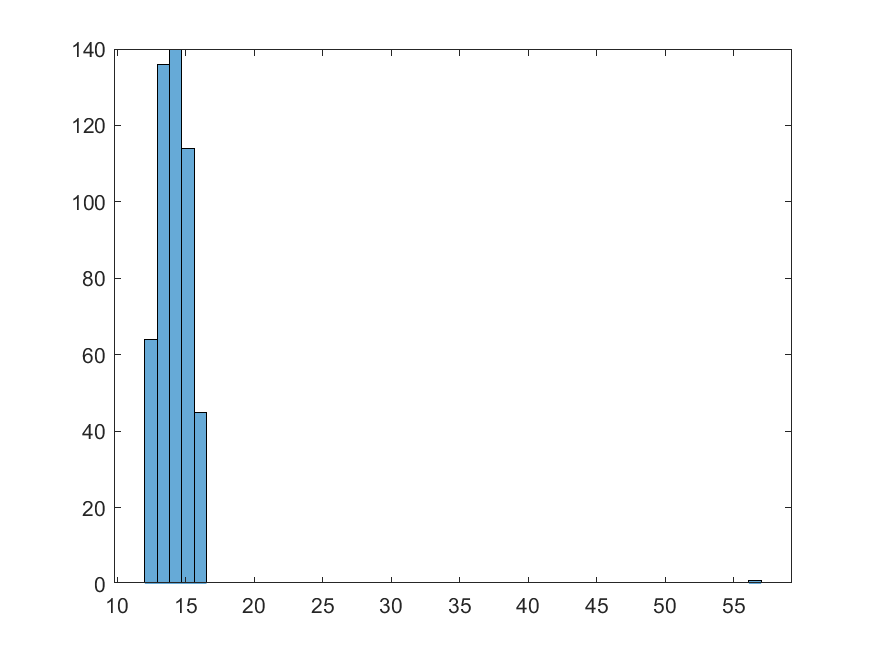}
\centerline{(a)}
\end{minipage}
\begin{minipage}{0.32\linewidth}
\vspace{0.4pt}
\includegraphics[width=\textwidth]{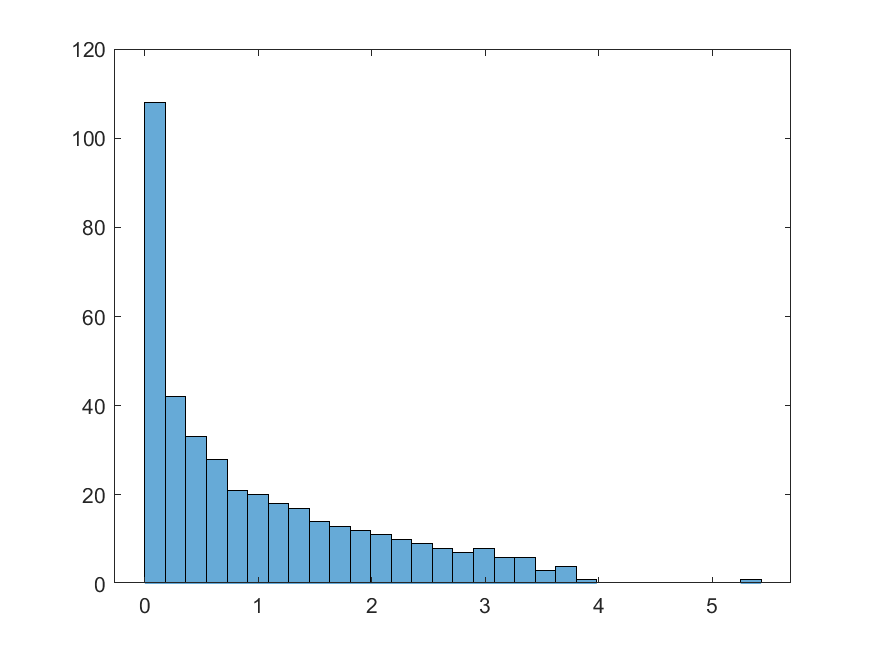}
\centerline{(b)}
\end{minipage}
\begin{minipage}{0.32\linewidth}
\vspace{0.4pt}
\includegraphics[width=\textwidth]{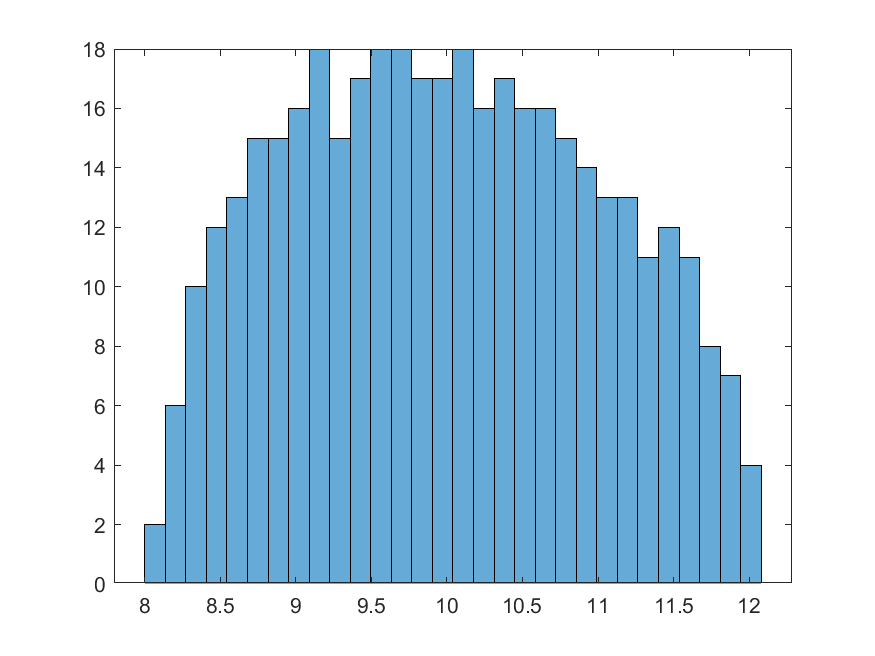}
\centerline{(c)}
\end{minipage}
\caption{Eigenvalues of the sample covariance matrix with dimensions
 $(M,N)$: (a) $(400,40000)$,  (b) $(400,400)$, and (c) $(40000,400)$.}
\label{figmn}
\end{figure}

In the work of \cite{baiandding2012}, the authors introduced a method to estimate the spikes $\alpha_1, \ldots, \alpha_L$, relying on the following identity:
\begin{align*}
\alpha_\ell
=-\frac{\sqrt{\phi}}{m_1[\psi_{\phi}(\alpha_\ell)]}\quad \text{for}~ \ell=1,\ldots,L.
\end{align*}
By substituting the empirical counterparts of $m_1(z)$ and $\psi_{\phi}(\alpha_\ell)$, the authors proposed a set of estimates $\{\hat\alpha_{B}(\lambda_j), \lambda_j\in \mathcal R_{\ell}\}$ for $\alpha_\ell$, defined as
	\begin{align*}
	\hat\alpha_{B}(\lambda_j) 
	=-\sqrt{\phi}\left[\frac{1}{N} \sum_{\lambda_k \notin \mathcal R_\ell} \frac{1}{\lambda_k-\lambda}_j\right]^{-1}, \quad \lambda_j\in\mathcal R_\ell,	
	\end{align*}
where $\{\lambda_j, j=1,\ldots,N\}$ are the $N$ eigenvalues of the matrix $W$, sorted in descending order,
$\mathcal R_{\ell}$ denotes the $\ell$th cluster of sample spiked eigenvalues, and $\lambda_j$ can be any spikes within this cluster.
Nevertheless, there is currently no criterion for selecting $\lambda_j$'s. To address this concern, we observe that the random vector
$$
\left\{\sqrt{N}(\hat\alpha_{B}(\lambda_j)-\alpha): \lambda_j\in\mathcal R_\ell\right \}
$$ 
will converge in distribution to the eigenvalues of a zero-mean Gaussian matrix under certain conditions, see \cite{bai2008central}. This suggests that an individual estimate $\hat\alpha_{B}(\lambda_j)$ may be potentially biased, but the average of these estimates can avoid such an issue. We thus consider an averaged estimate of $\alpha_\ell$, i.e., 
$$
\hat\alpha_{B,\ell}=\frac1{q_\ell}\sum_{\lambda_j\in \mathcal R_\ell} \hat\alpha_{B}(\lambda_j).
$$
Using local laws, we can establish the convergence rate for the estimate $\hat\alpha_{B,\ell}$ in general asymptotic regimes. 
\begin{theorem}\label{app}
Suppose that Assumptions (A1)-(A2)-(A3)-(A4) hold. Then, we have
\begin{align*}
|\hat\alpha_{B,\ell} -\alpha_\ell| \prec N^{-\frac12}
\end{align*}
for $\ell=1,\ldots,L$.
\end{theorem}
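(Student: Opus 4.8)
The plan is to reduce the theorem to a single‑eigenvalue estimate, control it through the local law outside the spectrum (Theorem \ref{thm: Local law outside the spectrum}) together with the secular (outlier) equation, and then invert.

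\textbf{Reduction.} Since the multiplicity $q_\ell$ is bounded, by the triangle inequality it suffices to prove $|\hat\alpha_{B}(\lambda_j)-\alpha_\ell|\prec N^{-1/2}$ for each $\lambda_j\in\mathcal{R}_\ell$. Write $\theta_\ell:=\psi_\phi(\alpha_\ell)$ for the classical location of the $\ell$‑th cluster, $\check{m}(z):=N^{-1}\sum_{\lambda_k\notin\mathcal{R}_\ell}(\lambda_k-z)^{-1}$, so that $\hat\alpha_{B}(\lambda_j)=-\sqrt\phi/\check{m}(\lambda_j)$, and let $m_1$ denote the deterministic equivalent attached to the pure‑bulk spectrum $\pi_{L+1}$ (which is analytic near $\theta_\ell$). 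Recalling the Bai--Ding identity $m_1(\theta_\ell)=-\sqrt\phi/\alpha_\ell$, whence $|m_1(\theta_\ell)|\asymp\sqrt\phi$ because $\alpha_\ell\asymp 1$, it is enough to establish
\[
\check{m}(\lambda_j)=-\frac{\sqrt\phi}{\alpha_\ell}+\mathrm{O}_\prec\!\big(\sqrt\phi\,N^{-1/2}\big),
\]
since then $\hat\alpha_{B}(\lambda_j)=-\sqrt\phi/\check{m}(\lambda_j)=\alpha_\ell\big(1+\mathrm{O}_\prec(N^{-1/2})\big)=\alpha_\ell+\mathrm{O}_\prec(N^{-1/2})$.

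\textbf{Step 1 (local law).} Let $\widetilde{W}$ be the sample covariance matrix obtained from $W$ by deleting the $\mathcal{L}$ spiked population directions; it satisfies (A1)--(A3) with the bulk spectrum $\pi_{L+1}$, and its right spectral edge differs from that of $W$ by $\mathrm{O}(\mathcal{L}/M)$. Applying Theorem \ref{thm: Local law outside the spectrum} to $\widetilde{W}$ on a neighbourhood of $\theta_\ell$ kept at distance $\kappa\asymp 1+\phi^{-1/2}$ from $\operatorname{supp}\widetilde{\rho}$, and using $K=\min\{M,N\}$, one checks that the bound $K^{-1}[(\kappa+\eta)+(\kappa+\eta)^2]^{-1}$ is $\mathrm{O}(N^{-1})$ there (the powers of $\phi$ cancel), so $m_{\widetilde{W}}(\lambda_j)=m_1(\lambda_j)+\mathrm{O}_\prec(N^{-1})$. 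Moreover $\check{m}$ differs from $m_{\widetilde{W}}$ only through the bulk eigenvalues of $W$, which interlace with those of $\widetilde{W}$ up to a shift of $\mathcal{L}=\mathrm{O}(1)$ indices (contributing $\mathrm{O}(\mathcal{L}\,\|\widetilde{W}\|/(N\kappa^2))$ with $\|\widetilde{W}\|=\mathrm{O}(1)$), and through the $\mathcal{L}-q_\ell$ eigenvalues of the other clusters, which lie at distance $\gtrsim 1+\phi^{-1/2}$ from $\lambda_j$ (contributing $\mathrm{O}(\mathcal{L}/(N\kappa))$); both corrections are $\prec\sqrt\phi\,N^{-1/2}$, hence $\check{m}(\lambda_j)=m_1(\lambda_j)+\mathrm{O}_\prec(\sqrt\phi\,N^{-1/2})$.

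\textbf{Step 2 (secular equation) and conclusion.} Write $W=\widetilde{W}+\mathbf{R}\,\mathcal{D}\,\mathbf{R}^{*}$, with $\mathbf{R}$ collecting the $\mathcal{L}$ spiked rows of $X$ and $\mathcal{D}=\mathrm{diag}(\alpha_1 I_{q_1},\dots,\alpha_L I_{q_L})$. With high probability $\lambda_j$ is an eigenvalue of $W$ but not of $\widetilde{W}$, hence $\det(\mathcal{D}^{-1}+\mathbf{R}^{*}\widetilde{G}(\lambda_j)\mathbf{R})=0$ for $\widetilde{G}(z)=(\widetilde{W}-zI)^{-1}$. By eigenvalue rigidity (Theorem \ref{thm: Rigidity}) no eigenvalue of $\widetilde{W}$ lies within $c_0(1+\phi^{-1/2})$ of $\theta_\ell$; combined with a crude a priori localization $\lambda_j\in[\theta_\ell-\delta_0,\theta_\ell+\delta_0]$ — which makes $\widetilde{G}(\lambda_j)$ effectively independent of the rows in $\mathbf{R}$ — Lemma \ref{lem: baisctool_largedevi} (which applies equally to the rows of $X$) gives, uniformly on that interval, $(\mathbf{R}^{*}\widetilde{G}(\lambda_j)\mathbf{R})_{ab}=\delta_{ab}\,\phi^{-1/2}m_{\widetilde{W}}(\lambda_j)+\mathrm{O}_\prec(\|\widetilde{G}(\lambda_j)\|_F/\sqrt{MN})$, since $\tfrac{1}{\sqrt{MN}}\operatorname{tr}\widetilde{G}(\lambda_j)=\sqrt{N/M}\,m_{\widetilde{W}}(\lambda_j)=\phi^{-1/2}m_{\widetilde{W}}(\lambda_j)$. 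As the clusters correspond bijectively to the spikes (Assumption (A4)) and $\lambda_j$ sits in the $\ell$‑th one, the vanishing determinant forces its $\ell$‑th diagonal entry to be small, i.e. $\phi^{-1/2}m_{\widetilde{W}}(\lambda_j)=-\alpha_\ell^{-1}+\mathrm{O}_\prec(\|\widetilde{G}(\lambda_j)\|_F/\sqrt{MN})$. Finally, by rigidity every $\widetilde{\lambda}_k$ is at distance $\gtrsim 1+\phi^{-1/2}$ from $\lambda_j$, so $\|\widetilde{G}(\lambda_j)\|_F^2=\sum_k(\widetilde{\lambda}_k-\lambda_j)^{-2}\lesssim N(1+\phi^{-1/2})^{-2}$ and the error above is $\lesssim(M(1+\phi^{-1/2})^2)^{-1/2}\prec N^{-1/2}$. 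Hence $\phi^{-1/2}m_{\widetilde{W}}(\lambda_j)=-\alpha_\ell^{-1}+\mathrm{O}_\prec(N^{-1/2})$; combining with Step 1 yields $\phi^{-1/2}m_1(\lambda_j)=-\alpha_\ell^{-1}+\mathrm{O}_\prec(N^{-1/2})$, i.e. $m_1(\lambda_j)=-\sqrt\phi/\alpha_\ell+\mathrm{O}_\prec(\sqrt\phi\,N^{-1/2})$, and then $\check{m}(\lambda_j)=-\sqrt\phi/\alpha_\ell+\mathrm{O}_\prec(\sqrt\phi\,N^{-1/2})$ by the last display of Step 1. By the Reduction step this is the claim.

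\textbf{Main obstacle.} No single estimate is hard in isolation; the delicate point is the \emph{uniform bookkeeping of the $\phi$‑dependent normalizations}. The matrix $W$, its eigenvalues, the outlier location $\theta_\ell$ and $m_1$ all live on scales that degenerate as $\phi\to 0$, and one must verify that every error term collapses to the single rate $N^{-1/2}$; it is precisely the sharp shape $K^{-1}[(\kappa+\eta)+(\kappa+\eta)^2]^{-1}$ of the local law outside the spectrum, together with $K=\min\{M,N\}$ and $\kappa\asymp 1+\phi^{-1/2}$ at a distance spike, that makes this balance work (the $\mathrm{O}(N^{-1})$ of Step 1 and the $\mathrm{O}(N^{-1/2})$ of Step 2 each have to sit on the correct side of the relevant powers of $\phi$). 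Subsidiary care is needed for the a priori localization of $\lambda_j$ so that Lemma \ref{lem: baisctool_largedevi} applies to the spiked rows, for identifying which diagonal block of the secular determinant vanishes, and for the interlacing argument when $M<N$, in which case $\widetilde{W}$ carries a block of zero eigenvalues.
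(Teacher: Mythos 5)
Your proposal is correct and follows essentially the same route as the paper: compare with the spike-removed matrix $W^{0}$ via Cauchy interlacing, invoke the local law outside the spectrum together with rigidity, and locate the outlier through the finite-rank secular equation $\det(\mathcal{D}^{-1}+\mathbf{R}^{*}\widetilde{G}(\lambda_j)\mathbf{R})=0$ combined with the large-deviation bounds for the quadratic form $\mathbf{u}^{*}X\widetilde{G}X^{*}\mathbf{u}$. The only difference is organizational: you conclude directly at the level of $\check{m}(\lambda_j)$ and invert, whereas the paper first extracts the intermediate eigenvalue-location estimate $|\lambda_j-\psi_{\phi}(\alpha_\ell)|\prec K^{-1/2}$ from the same secular-equation argument and then feeds it into the estimator; both versions share the same independence gloss for $\widetilde{G}$ versus the spiked rows and the same $\phi$-bookkeeping, so this is a repackaging rather than a new proof.
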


There is an alternative estimate of $\alpha_\ell$ introduced in \cite{mestre2008improved}, which is formulated using contour intergation as 
\begin{align}\label{Mestre}
\hat\alpha_{M,\ell} 
& = \frac{-N\sqrt{\phi}}{2 \pi \mathrm{i}~ q_\ell} \oint_{\mathcal C_\ell} \frac{z}{m_W(z)} d m_W(z)
=\frac {N\sqrt{\phi}}{q_\ell} \sum_{\lambda_\ell\in \mathcal R_\ell} \left(\lambda_\ell-\mu_\ell\right),\quad \ell=1,\ldots, L.
\end{align}
In this formula, $\mathcal C_\ell$ is a simple contour, counterclockwise orientated, enclosing only the cluster $\mathcal R_\ell$ of sample spikes, and $\{\mu_\ell:\mu_\ell\in (\lambda_{\ell+1}, \lambda_\ell),~ m_W(\mu_\ell)=0\}$  is a set of poles contained inside the contour. One limitation of this estimation lies in the absence of a theoretical guarantee of consistency when $q_{\ell}=\mathrm{O}(1)$.

When the spike $\alpha_\ell$ is simple, i.e., $q_\ell=1$, Mestre's estimation is closely related to Bai-Ding's method. In particular, we have
\begin{align*}
m_W(\mu_\ell)=\frac1N\frac{1}{\lambda_\ell-\mu_\ell}+\frac1N \sum_{k\neq \ell}\frac{1}{\lambda_k-\mu_\ell}=0
\end{align*}
and $\lambda_\ell-\mu_\ell=o_{a.s.}(1)$, which give
\begin{align*}
\hat\alpha_{M,\ell} = \hat\alpha_{B,\ell}+ \mathrm{O}_{a.s.}\left(\sqrt{\phi}(\lambda_\ell-\mu_\ell)\right).	
\end{align*}
Therefore, the two estimates have the same convergence rate from Theorem \ref{app}.

\subsection{Proof of Theorem \ref{app}.}

Let $\Sigma^0$ denote a new covariance matrix obtained by removing all the spiked eigenvalues from $\Sigma$. We define $W=X^{*}\Sigma X$ and $W^0=X^{*}\Sigma^0X$, along with their ESDs denoted by $\rho_W$ and $\rho_{W^0}$, and their Stieltjes transforms denoted by $m_W$ and $m_{W^0}$.

We prove the theorem by showing two lemmas.
The first lemma gives the eigenvalue sticking in the bulk eigenvalues of $W$ and $W^0$. The second one shows the exact location for the spiked eigenvalues of $W$. 
\begin{lemma}
    For the spiked covariance model $\Sigma$ in \eqref{spike-model} and $\Sigma^0$ described above. Suppose $\mathcal{L}$ is a finite number. Then
    \begin{equation}
        |m_W(z)-m_{W^0}(z)|=\mathrm{O}(\frac{1}{N\eta}).
    \end{equation}
\end{lemma}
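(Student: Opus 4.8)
The plan is to prove this as a purely deterministic estimate, valid for every realization of $X$, obtained from eigenvalue interlacing. The starting point is that deleting the spiked part of $\Sigma$ is a low-rank modification. Writing the spectral decomposition of $\Sigma$ so that the $\mathcal L=\sum_{\ell}q_\ell$ spiked eigenvalues span an $\mathcal L$-dimensional invariant subspace with orthonormal basis $U_s\in\mathbb{C}^{M\times\mathcal L}$, we have $\Sigma-\Sigma^0=U_sA_sU_s^*$ with $A_s$ diagonal, and therefore
\[
W-W^0=X^*(\Sigma-\Sigma^0)X=(U_s^*X)^*A_s(U_s^*X)
\]
is a Hermitian matrix of rank at most $\mathcal L$.

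Next I would invoke Weyl's inequalities (equivalently, Cauchy interlacing / the rank inequality for distribution functions): if two $N\times N$ Hermitian matrices differ by a matrix of rank at most $\mathcal L$, then their eigenvalue counting functions differ by at most $\mathcal L$ at every point. Hence the empirical distribution functions $F_W$ and $F_{W^0}$ of $W$ and $W^0$ satisfy $\|F_W-F_{W^0}\|_\infty\le \mathcal L/N$. Expressing the difference of Stieltjes transforms by integration by parts,
\[
m_W(z)-m_{W^0}(z)=\int_{\mathbb R}\frac{F_W(x)-F_{W^0}(x)}{(x-z)^2}\,\mathrm{d}x,
\]
where each of the two integrals converges absolutely because the distribution functions are bounded and $\int_{\mathbb R}|x-z|^{-2}\,\mathrm{d}x=\pi/\eta<\infty$. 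Bounding the integrand by $\|F_W-F_{W^0}\|_\infty\,|x-z|^{-2}$ gives
\[
|m_W(z)-m_{W^0}(z)|\le \frac{\mathcal L}{N}\int_{\mathbb R}\frac{\mathrm{d}x}{(x-E)^2+\eta^2}=\frac{\pi\mathcal L}{N\eta}=\mathrm{O}\!\left(\frac1{N\eta}\right),
\]
since $\mathcal L=\mathrm{O}(1)$ by hypothesis.

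No probabilistic input is required. The only points needing care are (i) pinning down the meaning of ``removing the spiked eigenvalues'' so that $W-W^0$ is genuinely of rank $\le\mathcal L$ --- working in the eigenbasis of $\Sigma$ as above settles this whether one sets the spikes to zero, replaces them by bulk values, or literally drops the corresponding rows of $X$ --- and (ii) justifying the integration by parts, i.e.\ the vanishing of the boundary terms and the separate absolute convergence of the two integrals before recombining them. I expect (ii) to be the only (mild) technical obstacle; everything else is an immediate consequence of interlacing. An alternative that bypasses the integration by parts is to pair the eigenvalues $\mu_i$ of $W$ with $\nu_i$ of $W^0$ via the interlacing relations $\nu_i\le\mu_i\le\nu_{i-\mathcal L}$ and telescope $\sum_i\bigl((\mu_i-z)^{-1}-(\nu_i-z)^{-1}\bigr)$, but the route above is cleaner.
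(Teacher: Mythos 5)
Your argument is correct and is essentially the paper's own proof: both rest on the observation that $W-W^0$ is a rank-$\mathcal L$ perturbation, so by interlacing the empirical distribution functions differ by at most $\mathcal L/N$ uniformly, and then the Stieltjes-transform difference is bounded by $\frac{\mathcal L}{N}\int_{\mathbb R}\frac{\mathrm{d}x}{(x-E)^2+\eta^2}=\frac{\pi\mathcal L}{N\eta}$. The paper phrases the first step directly as Cauchy interlacing of the eigenvalues of $W$ and $W^0$, while you derive it from the rank inequality and make the integration-by-parts step explicit, but the content is the same.
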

\begin{proof}
By the Cauchy interlacing property, we know that
    \begin{equation*}
        \lambda_{\mathcal{L}+k}(W)\le \lambda_{k}(W^{0})\le\lambda_{k}(W),\quad k=1,\cdots, N-\mathcal{L}.
    \end{equation*}
    This implies that 
    \begin{equation*}
        \sup_{x\in\mathcal{R}}|\rho_{W}(x)-\rho_{W^0}(x)|\le \frac{\mathcal{L}}{N}
    \end{equation*}
and therefore, 
\begin{equation*}
        |m_W(z)-m_{W^0}(z)|\le \int\frac{1}{|x-z|}\mathrm{d}(\rho_{W}(x)-\rho_{W^0}(x))\le \eta^{-1}N^{-1}\int\frac{\mathcal{L}\eta}{(x-E)^2+\eta^2}\mathrm{d}x\le\frac{\pi\mathcal{L}}{N\eta}.
    \end{equation*}
\end{proof}

\begin{lemma}
    Suppose that Assumption (A4) holds for the spiked covariance model in \eqref{spike-model}. Then, we have
    \begin{equation}
        |\lambda_{j}-\psi_{\phi}(\alpha_{\ell})|\prec K^{-1/2},\quad \forall \lambda_j\in \mathcal R_\ell,
    \end{equation}
    for $\ell=1,\dots,L$.
\end{lemma}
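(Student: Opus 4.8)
The plan is to represent $W$ as a rank-$\mathcal{L}$ perturbation of the un-spiked matrix $W^{0}$ and to locate the spiked eigenvalues through the resulting master (determinantal) equation, estimating the quadratic forms that appear with the large deviation bounds of Lemma \ref{lem: baisctool_largedevi} and the local law outside the spectrum (Theorem \ref{thm: Local law outside the spectrum}).

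First I would set up the perturbation. By \eqref{ass_T} we may assume $\Sigma=\operatorname{diag}(\alpha_{1}I_{q_{1}},\dots,\alpha_{L}I_{q_{L}},\Sigma^{0})$, so that, writing $\mathbf{r}_{i}$ for the $i$-th row of $X$, $W=W^{0}+AA^{*}$ where $A$ is the $N\times\mathcal{L}$ matrix whose blocks of columns are $\alpha_{\ell}^{1/2}$ times the $q_{\ell}$ spike rows of $X$; in particular $A$ is independent of $W^{0}$, and $\Sigma^{0}$ still obeys (A1)-(A2)-(A3), so Theorems \ref{thm: Rigidity} and \ref{thm: Local law outside the spectrum} apply to $W^{0}$. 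For $\lambda\notin\operatorname{spec}(W^{0})$ the matrix determinant lemma gives $\lambda\in\operatorname{spec}(W)$ if and only if $\det(I_{\mathcal{L}}+A^{*}(W^{0}-\lambda)^{-1}A)=0$. By the classical spectrum-separation theory (\cite{BSbook}) every $\lambda_{j}\in\mathcal{R}_{\ell}$ converges almost surely to $z_{0}:=\psi_{\phi}(\alpha_{\ell})$, which by Assumption (A4) lies at distance of order $1+\phi^{-1/2}$ from the edge $R$; combined with the rigidity of $W^{0}$ this confines, with high probability, every $\lambda_{j}\in\mathcal{R}_{\ell}$ to a fixed neighborhood $\mathcal{N}$ of $z_{0}$ on which $\operatorname{dist}(\,\cdot\,,\operatorname{spec}W^{0})\gtrsim 1+\phi^{-1/2}$, so that $\lambda_{j}$ solves the master equation.

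The core step is the approximation, uniform for $z\in\mathcal{N}$,
\[
A^{*}(W^{0}-z)^{-1}A=\phi^{-1/2}m_{1}(z)\,D+\mathcal{E}(z),\qquad D=\operatorname{diag}(\alpha_{1}I_{q_{1}},\dots,\alpha_{L}I_{q_{L}}),\quad \|\mathcal{E}(z)\|\prec(1+\phi^{-1/2})K^{-1/2}.
\]
It holds because $\mathbb{E}[\mathbf{r}_{i}^{*}(W^{0}-z)^{-1}\mathbf{r}_{j}\mid W^{0}]=\delta_{ij}\phi^{-1/2}m_{W^{0}}(z)$, because Lemma \ref{lem: baisctool_largedevi} applied to the rows of $X$ (which carry the same normalization \eqref{ass_X} as the columns) bounds the fluctuation of $\mathbf{r}_{i}^{*}(W^{0}-z)^{-1}\mathbf{r}_{j}$ by $\prec(MN)^{-1/2}\|(W^{0}-z)^{-1}\|_{F}\lesssim(MN)^{-1/2}N^{1/2}(1+\phi^{-1/2})$ on $\mathcal{N}$, and because Theorem \ref{thm: Local law outside the spectrum} controls $m_{W^{0}}(z)-m_{1}(z)$ by the even smaller $(1+\phi^{-1/2})K^{-1}$ (the discrepancy between the deterministic equivalents of $\Sigma^{0}$ and $\Sigma$ is $\mathrm{O}(M^{-1})$ and is absorbed). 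Since $\mathcal{L}$ is fixed, Weyl's perturbation inequality then shows that $A^{*}(W^{0}-z)^{-1}A$ has an eigenvalue equal to $\alpha_{\ell}\phi^{-1/2}m_{1}(z)+\mathrm{O}_{\prec}((1+\phi^{-1/2})K^{-1/2})$, while its remaining eigenvalues stay a fixed distance from $-1$ on $\mathcal{N}$ (by the separation built into (A4)); hence the master equation forces $|1+\alpha_{\ell}\phi^{-1/2}m_{1}(z)|\prec(1+\phi^{-1/2})K^{-1/2}$. Finally, the identity $\alpha_{\ell}=-\sqrt{\phi}/m_{1}[\psi_{\phi}(\alpha_{\ell})]$ recorded in Section \ref{sec: Application} gives $1+\alpha_{\ell}\phi^{-1/2}m_{1}(z_{0})=0$, and differentiating $m_{1}(\psi_{\phi}(\alpha))=-\sqrt{\phi}/\alpha$ gives $\frac{\mathrm{d}}{\mathrm{d}z}\bigl(1+\alpha_{\ell}\phi^{-1/2}m_{1}(z)\bigr)\big|_{z=z_{0}}=\bigl(\alpha_{\ell}\psi_{\phi}'(\alpha_{\ell})\bigr)^{-1}$, of modulus $\asymp(1+\phi^{-1/2})^{-1}$ by Assumption (A4) and Theorem \ref{thm: asym_laws}; dividing the previous estimate by this lower bound yields $|\lambda_{j}-z_{0}|\prec K^{-1/2}$ for every $\lambda_{j}\in\mathcal{R}_{\ell}$, which is the claim. (The case $\phi\ll1$ is run verbatim after the rescaling $W\mapsto W_{o}$, $\mathbf{D}\mapsto\mathbf{D}_{o}$ of Section \ref{main proof}, which removes all the $\phi$-factors above.)

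I expect the main obstacle to be the bookkeeping of the $\phi$-dependence: the three contributions to $\mathcal{E}(z)$, the size of the derivative $(\alpha_{\ell}\psi_{\phi}'(\alpha_{\ell}))^{-1}$, and the distance scale of $\mathcal{N}$ each carry powers of $1+\phi^{-1/2}$, and one must check — most cleanly by passing to the rescaled domain $\mathbf{D}_{o}$ — that they cancel to leave the clean rate $K^{-1/2}$. A secondary subtlety is that for spikes of multiplicity $q_{\ell}>1$ one must extract $|1+\alpha_{\ell}\phi^{-1/2}m_{1}(z)|$ from the vanishing of the $\mathcal{L}\times\mathcal{L}$ determinant through Weyl's eigenvalue inequality, rather than by taking a $q_{\ell}$-th root of the determinant, which would degrade the rate; and the pointwise stochastic domination on $\mathcal{N}$ must be upgraded to a uniform one by a routine net-and-continuity argument.
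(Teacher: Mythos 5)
Your argument is essentially the paper's own proof: the paper likewise decomposes $\Sigma$ as $\Sigma^{0}+\alpha_{1,1}\mathbf{u}_1\mathbf{u}_1^{*}$, derives the master equation $1+\alpha_{1,1}\mathbf{u}_1^{*}X(X^{*}\Sigma^{0}X-\lambda I)^{-1}X^{*}\mathbf{u}_1=0$ for sample spikes (which are not eigenvalues of $W^{0}$), bounds the quadratic-form fluctuation by the large deviation lemma, replaces $\frac{1}{\sqrt{MN}}\operatorname{tr}G^{0}(\lambda)$ by $\phi^{-1/2}m_1^{0}(\lambda)$ via Theorem \ref{thm: Local law outside the spectrum}, and inverts the self-consistent equation to land at $\psi_{\phi}(\alpha_{\ell})$ up to $\mathrm{O}_{\prec}(M^{-1/2})$ --- exactly your scheme, carried out there for a simple spike with the general (higher-multiplicity) case referred to the perturbative argument of \cite{bloemendal2016principal}. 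The only caveat is your stated bound $\|\mathcal{E}(z)\|\prec(1+\phi^{-1/2})K^{-1/2}$, which as written would lose a factor $(1+\phi^{-1/2})^{2}$ after dividing by the derivative when $\phi\ll1$; using $\|(W^{0}-z)^{-1}\|\lesssim(1+\phi^{-1/2})^{-1}$ on $\mathcal{N}$ (equivalently, working in the rescaled variables $W_{o}$, $z_{o}$, as you yourself propose and as the paper does for $\phi\ll1$) repairs the bookkeeping and yields the clean rate $K^{-1/2}$.
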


\begin{proof}
    The proof of this lemma is a perturbative one and similar to those in \cite[Theorem 2.3]{bloemendal2016principal} especially for the case $\phi\gtrsim1$. In the sequel, we only investigate the case where $K=1$, $q_1=1$ with $\phi\ll1$. We give the estimation for the first eigenvalue $\lambda_{1,1}$ of $W$, while the other cases can be handled similarly. Denote the interval $I_0:=[\psi_{\phi}(\alpha_{1,1})-M^{-1/2+\epsilon},\psi_{\phi}(\alpha_{1,1})+M^{-1/2+\epsilon}]$. Then, it is easy to see that if $\lambda$ is an eigenvalue of $W$ lying in $I_0$, then $\lambda$ can not be the eigenvalue of $W^0$. Consequently, we have 
    \begin{gather}
        \operatorname{det}(\lambda I-X^{*}(\Sigma^0+\alpha_{1,1}\mathbf{u}_1\mathbf{u}_1^{*})X)=0\\
        \Rightarrow1+\alpha_{1,1}\mathbf{u}_1^{*}X(X^{*}\Sigma^0X-\lambda I)^{-1}X^{*}\mathbf{u}_1=0,
    \end{gather}
    where $\mathbf{u}_1$ is the corresponding eigenvector of $\Sigma$ corresponding to $\alpha_{1,1}$. Since $\mathbf{u}_1$ is orthogonal to the eigenspace of $\Sigma^0$, one has from large deviation bounds for the elements in $X$ that 
    \begin{equation*}
        \mathbf{u}_1^{*}X(X^{*}\Sigma^0X-\lambda I)^{-1}X^{*}\mathbf{u}_1-\frac{1}{\sqrt{MN}}\operatorname{tr}(X^{*}\Sigma^0X-\lambda I)^{-1}\prec \frac{1}{\sqrt{MN}}\sqrt{\frac{\operatorname{Im}\operatorname{tr}G^0(\lambda)}{\eta}},
    \end{equation*}
    where $G^0(z):=(X^{*}\Sigma^0X-z I)$. On the other hand, we have from Theorem \ref{thm: locallaw_phi>1}
    \begin{equation*}
            |\frac{1}{\sqrt{MN}}\operatorname{tr}G^0(\lambda)-\phi^{-1/2}m_1^0(\lambda)|\prec \frac{\phi^{-1/2}}{K(\kappa+\eta)^2}.
    \end{equation*}
    Recall that for $\phi\gg1$, one has $\operatorname{Im}m_1^0(z)\sim\eta$ for $z$ outside spectrum with $(\kappa+\eta)\sim1$. On the other hand, one should notice that for $z$ around locates the spiked eigenvalue (outside spectrum), we have 
    \begin{equation*}
        \operatorname{Im}m_1^0(z)=\operatorname{Im}\int\frac{1}{x-z}\rho^0(\mathrm{d}x)=\int\frac{\eta}{(x-E+\eta)^2}\rho^0(\mathrm{d}x)\sim \phi\eta,
    \end{equation*}
    where $\rho^0$ is the limiting density of $\rho_{W^0}$ and we also used the fact that $(\kappa+\eta)\sim\phi^{-1/2}$. It gives that for $\phi\ll1$,
    \begin{gather*}
        1+\alpha_{1,1}\phi^{-1/2}m_1^0(\lambda)+\mathrm{O}_{\prec}(\frac{\phi^{1/2}}{\sqrt{M}})=0\\
        \Rightarrow  \frac{1}{m_1^0(\lambda)}+\phi^{-1/2}\alpha_{1,1}=\mathrm{O}_{\prec}(\frac{1}{\sqrt{M}}),
    \end{gather*}
    where we used the fact that $|m_1^0(\lambda)|\sim\phi^{1/2}$. Besides, recall that $m_1^0(\lambda)$ satisfies
    \begin{equation*}
        \frac{1}{m_1^0(\lambda)}=-\lambda+\phi^{1/2}\int\frac{x}{1+\phi^{-1/2}m_1^0(\lambda)x}\pi^0(\mathrm{d}x).
    \end{equation*}
    Consequently, we may observe that 
    \begin{equation}
        \lambda=\phi^{-1/2}\alpha_{1,1}+\phi^{1/2}\alpha_{1,1}\int\frac{x}{\alpha_{1,1}-x}\pi^0(\mathrm{d}x)+\mathrm{O}_{\prec}(\frac{1}{\sqrt{M}}).
    \end{equation}
    It implies that $|\lambda-\psi_{\phi}(\alpha_{1,1})|\prec M^{-1/2}$. This completes the proof.
\end{proof}

\bibliographystyle{abbrvnat} 
\bibliography{ref}       

\begin{thebibliography}{37}
\providecommand{\natexlab}[1]{#1}
\providecommand{\url}[1]{\texttt{#1}}
\expandafter\ifx\csname urlstyle\endcsname\relax
  \providecommand{\doi}[1]{doi: #1}\else
  \providecommand{\doi}{doi: \begingroup \urlstyle{rm}\Url}\fi

\bibitem[Advani et~al.(2020)Advani, Saxe, and Sompolinsky]{2020High}
M.~S. Advani, A.~M. Saxe, and H.~Sompolinsky.
\newblock High-dimensional dynamics of generalization error in neural networks.
\newblock \emph{Neural Networks}, 132:\penalty0 428--446, 2020.

\bibitem[Ahn and Horenstein(2013)]{ahn2013eigenvalue}
S.~C. Ahn and A.~R. Horenstein.
\newblock Eigenvalue ratio test for the number of factors.
\newblock \emph{Econometrica}, 81\penalty0 (3):\penalty0 1203--1227, 2013.

\bibitem[Alex et~al.(2014)Alex, Erdős, Knowles, Yau, and Yin]{Bloemendal2014}
B.~Alex, L.~Erdős, A.~Knowles, H.-T. Yau, and J.~Yin.
\newblock {Isotropic local laws for sample covariance and generalized Wigner
  matrices}.
\newblock \emph{Electronic Journal of Probability}, 19\penalty0
  (none):\penalty0 1 -- 53, 2014.
\newblock \doi{10.1214/EJP.v19-3054}.
\newblock URL \url{https://doi.org/10.1214/EJP.v19-3054}.

\bibitem[Bai and Ding(2012)]{baiandding2012}
Z.~Bai and X.~Ding.
\newblock Estimation of spiked eigenvalues in spiked models.
\newblock \emph{Random Matrices: Theory and Applications}, 1\penalty0
  (02):\penalty0 1150011, 2012.

\bibitem[Bai and Silverstein(2004)]{BS04}
Z.~Bai and J.~W. Silverstein.
\newblock C{LT} for linear spectral statistics of large-dimensional sample
  covariance matrices.
\newblock \emph{The Annals of Probability}, 32\penalty0 (1A):\penalty0
  553--605, 2004.
\newblock ISSN 0091-1798.
\newblock \doi{10.1214/aop/1078415845}.
\newblock URL \url{https://doi.org/10.1214/aop/1078415845}.

\bibitem[Bai and Silverstein(2010)]{BSbook}
Z.~Bai and J.~W. Silverstein.
\newblock \emph{Spectral analysis of large dimensional random matrices}.
\newblock Springer Series in Statistics. Springer, New York, second edition,
  2010.
\newblock ISBN 978-1-4419-0660-1.
\newblock \doi{10.1007/978-1-4419-0661-8}.
\newblock URL \url{https://doi.org/10.1007/978-1-4419-0661-8}.

\bibitem[Bai and Yao(2008)]{bai2008central}
Z.~Bai and J.~Yao.
\newblock Central limit theorems for eigenvalues in a spiked population model.
\newblock \emph{Annales de l'Institut Henri Poincar\'{e} Probabilit\'{e}s et
  Statistiques}, 44\penalty0 (3):\penalty0 447--474, 2008.
\newblock ISSN 0246-0203.
\newblock \doi{10.1214/07-AIHP118}.
\newblock URL \url{https://doi.org/10.1214/07-AIHP118}.

\bibitem[Bai and Yin(1988)]{bai1988convergence}
Z.~Bai and Y.~Yin.
\newblock Convergence to the semicircle law.
\newblock \emph{The Annals of Probability}, 16\penalty0 (2):\penalty0 863--875,
  1988.

\bibitem[Bai and Zhang(2010)]{baizhang2010}
Z.~Bai and L.~Zhang.
\newblock The limiting spectral distribution of the product of the {W}igner
  matrix and a nonnegative definite matrix.
\newblock \emph{Journal of Multivariate Analysis}, 101\penalty0 (9):\penalty0
  1927--1949, 2010.
\newblock ISSN 0047-259X.
\newblock \doi{10.1016/j.jmva.2010.05.002}.
\newblock URL \url{https://doi.org/10.1016/j.jmva.2010.05.002}.

\bibitem[Bai et~al.(2009)Bai, Jiang, Yao, and Zheng]{bai2009corrections}
Z.~Bai, D.~Jiang, J.~Yao, and S.~Zheng.
\newblock Corrections to {LRT} on large-dimensional covariance matrix by {RMT}.
\newblock \emph{The Annals of Statistics}, 37\penalty0 (6B):\penalty0
  3822--3840, 2009.
\newblock ISSN 0090-5364.
\newblock \doi{10.1214/09-AOS694}.
\newblock URL \url{https://doi.org/10.1214/09-AOS694}.

\bibitem[Bao(2012)]{bao2012}
Z.~Bao.
\newblock Strong convergence of esd for the generalized sample covariance
  matrices when $p/n\to0$.
\newblock \emph{Statistics \& Probability Letters}, 82\penalty0 (5):\penalty0
  894--901, 2012.
\newblock ISSN 0167-7152.
\newblock \doi{https://doi.org/10.1016/j.spl.2012.01.012}.
\newblock URL
  \url{https://www.sciencedirect.com/science/article/pii/S0167715212000223}.

\bibitem[Bao et~al.(2015)Bao, Pan, and Zhou]{bao2015universality}
Z.~Bao, G.~Pan, and W.~Zhou.
\newblock {Universality for the largest eigenvalue of sample covariance
  matrices with general population}.
\newblock \emph{The Annals of Statistics}, 43\penalty0 (1):\penalty0 382 --
  421, 2015.
\newblock \doi{10.1214/14-AOS1281}.
\newblock URL \url{https://doi.org/10.1214/14-AOS1281}.

\bibitem[Bhattacharjee and Bose(2017)]{bhattacharjee2017matrix}
M.~Bhattacharjee and A.~Bose.
\newblock Matrix polynomial generalizations of the sample variance-covariance
  matrix when $pn^{-1}\to y\in(0, \infty)$.
\newblock \emph{Indian Journal of Pure and Applied Mathematics}, 48:\penalty0
  575--607, 2017.

\bibitem[Bloemendal et~al.(2016)Bloemendal, Knowles, Yau, and
  Yin]{bloemendal2016principal}
A.~Bloemendal, A.~Knowles, H.-T. Yau, and J.~Yin.
\newblock On the principal components of sample covariance matrices.
\newblock \emph{Probability theory and related fields}, 164\penalty0
  (1-2):\penalty0 459--552, 2016.

\bibitem[Chen and Pan(2012)]{chen2012convergence}
B.~Chen and G.~Pan.
\newblock {Convergence of the largest eigenvalue of normalized sample
  covariance matrices when $p$ and $n$ both tend to infinity with their ratio
  converging to zero}.
\newblock \emph{Bernoulli}, 18\penalty0 (4):\penalty0 1405 -- 1420, 2012.
\newblock \doi{10.3150/11-BEJ381}.
\newblock URL \url{https://doi.org/10.3150/11-BEJ381}.

\bibitem[Chen and Pan(2015)]{chen2015clt}
B.~Chen and G.~Pan.
\newblock {CLT for linear spectral statistics of normalized sample covariance
  matrices with the dimension much larger than the sample size}.
\newblock \emph{Bernoulli}, 21\penalty0 (2):\penalty0 1089 -- 1133, 2015.
\newblock \doi{10.3150/14-BEJ599}.
\newblock URL \url{https://doi.org/10.3150/14-BEJ599}.

\bibitem[Ding and Yang(2018)]{DingandYang2018}
X.~Ding and F.~Yang.
\newblock A necessary and sufficient condition for edge universality at the
  largest singular values of covariance matrices.
\newblock \emph{The Annals of Applied Probability}, 28\penalty0 (3):\penalty0
  1679--1738, 2018.

\bibitem[Giglio et~al.(2022)Giglio, Kelly, and Xiu]{giglio2022factor}
S.~Giglio, B.~Kelly, and D.~Xiu.
\newblock Factor models, machine learning, and asset pricing.
\newblock \emph{Annual Review of Financial Economics}, 14:\penalty0 337--368,
  2022.

\bibitem[Hwang et~al.(2019)Hwang, Lee, and Schnelli]{2019Local}
J.~Y. Hwang, J.~O. Lee, and K.~Schnelli.
\newblock {Local law and Tracy–Widom limit for sparse sample covariance
  matrices}.
\newblock \emph{The Annals of Applied Probability}, 29\penalty0 (5):\penalty0
  3006 -- 3036, 2019.
\newblock \doi{10.1214/19-AAP1472}.
\newblock URL \url{https://doi.org/10.1214/19-AAP1472}.

\bibitem[Jiang and Yang(2013)]{Jiang2013}
T.~Jiang and F.~Yang.
\newblock Central limit theorems for classical likelihood ratio tests for
  high-dimensional normal distributions.
\newblock \emph{The Annals of Statistics}, 41\penalty0 (4):\penalty0
  2029--2074, 2013.
\newblock ISSN 0090-5364.
\newblock \doi{10.1214/13-AOS1134}.
\newblock URL \url{https://doi.org/10.1214/13-AOS1134}.

\bibitem[Johnstone(2001)]{Johnstone01}
I.~M. Johnstone.
\newblock On the distribution of the largest eigenvalue in principal components
  analysis.
\newblock \emph{The Annals of Statistics}, 29\penalty0 (2):\penalty0 295--327,
  2001.
\newblock ISSN 0090-5364.
\newblock \doi{10.1214/aos/1009210544}.
\newblock URL \url{https://doi.org/10.1214/aos/1009210544}.

\bibitem[Karoui and Noureddine(2008)]{2008Operator}
E.~Karoui and Noureddine.
\newblock Operator norm consistent estimation of large-dimensional sparse
  covariance matrices.
\newblock \emph{Annals of Statistics}, 36\penalty0 (6):\penalty0 2717--2756,
  2008.

\bibitem[Knowles and Yin(2017)]{knowles2017anisotropic}
A.~Knowles and J.~Yin.
\newblock Anisotropic local laws for random matrices.
\newblock \emph{Probability Theory and Related Fields}, 169:\penalty0 257--352,
  2017.

\bibitem[Lee and Schnelli(2014)]{2014Tracy}
J.~O. Lee and K.~Schnelli.
\newblock Tracy-widom distribution for the largest eigenvalue of real sample
  covariance matrices with general population.
\newblock \emph{Annals of Applied Probability}, 26\penalty0 (6):\penalty0
  3786--3839, 2014.

\bibitem[Li et~al.(2022)Li, Wang, Yao, and Zhou]{LWYZ2020}
W.~Li, Q.~Wang, J.~Yao, and W.~Zhou.
\newblock On eigenvalues of a high-dimensional spatial sign covariance matrix.
\newblock \emph{Bernoulli}, 28\penalty0 (1):\penalty0 606--637, 2022.

\bibitem[Mar\v{c}henko and Pastur(1967)]{marchenko1967distribution}
V.~A. Mar\v{c}henko and L.~A. Pastur.
\newblock Distribution of eigenvalues for some sets of random matrices.
\newblock \emph{Matematicheskii Sbornik}, 114\penalty0 (4):\penalty0 507--536,
  1967.

\bibitem[Mestre(2008)]{mestre2008improved}
X.~Mestre.
\newblock Improved estimation of eigenvalues and eigenvectors of covariance
  matrices using their sample estimates.
\newblock \emph{IEEE Transactions on Information Theory}, 54\penalty0
  (11):\penalty0 5113--5129, 2008.
\newblock \doi{10.1109/TIT.2008.929938}.

\bibitem[Onatski(2009)]{onatski2009testing}
A.~Onatski.
\newblock Testing hypotheses about the number of factors in large factor
  models.
\newblock \emph{Econometrica}, 77\penalty0 (5):\penalty0 1447--1479, 2009.

\bibitem[Pillai and Yin(2014)]{PillaiandYin2014}
N.~S. Pillai and J.~Yin.
\newblock Universality of covariance matrices.
\newblock \emph{The Annals of Applied Probability}, 24\penalty0 (3):\penalty0
  935--1001, 2014.

\bibitem[Qiu et~al.(2023)Qiu, Li, and Yao]{qiu2023asymptotic}
J.~Qiu, Z.~Li, and J.~Yao.
\newblock Asymptotic normality for eigenvalue statistics of a general sample
  covariance matrix when $p/n \to \infty$ and applications.
\newblock \emph{The Annals of Statistics}, 51\penalty0 (3):\penalty0
  1427--1451, 2023.

\bibitem[Silverstein(1995)]{silverstein1995strong}
J.~W. Silverstein.
\newblock Strong convergence of the empirical distribution of eigenvalues of
  large dimensional random matrices.
\newblock \emph{Journal of Multivariate Analysis}, 55\penalty0 (2):\penalty0
  331--339, 1995.

\bibitem[Silverstein and Bai(1995)]{silverstein1995empirical}
J.~W. Silverstein and Z.~Bai.
\newblock On the empirical distribution of eigenvalues of a class of large
  dimensional random matrices.
\newblock \emph{Journal of Multivariate analysis}, 54\penalty0 (2):\penalty0
  175--192, 1995.

\bibitem[Wen et~al.(2022)Wen, Xie, Yu, and Zhou]{Wen2021}
J.~Wen, J.~Xie, L.~Yu, and W.~Zhou.
\newblock Tracy-{W}idom limit for the largest eigenvalue of high-dimensional
  covariance matrices in elliptical distributions.
\newblock \emph{Bernoulli}, 28\penalty0 (4):\penalty0 2941--2967, 2022.

\bibitem[Xiucai and Fan(2018)]{2018A}
D.~Xiucai and Y.~Fan.
\newblock A necessary and sufficient condition for edge universality at the
  largest singular values of covariance matrices.
\newblock \emph{Annals of Applied Probability}, 28\penalty0 (3):\penalty0
  1679--1738, 2018.

\bibitem[Yang(2019)]{yang2019edge}
F.~Yang.
\newblock Edge universality of separable covariance matrices.
\newblock \emph{Electronic Journal of Probability}, 24:\penalty0 1--57, 2019.

\bibitem[Yin(1986)]{yin1986limiting}
Y.~Yin.
\newblock Limiting spectral distribution for a class of random matrices.
\newblock \emph{Journal of multivariate analysis}, 20\penalty0 (1):\penalty0
  50--68, 1986.

\bibitem[Zhang et~al.(2022)Zhang, Zheng, Pan, and Zhong]{zhang2022asymptotic}
Z.~Zhang, S.~Zheng, G.~Pan, and P.-S. Zhong.
\newblock Asymptotic independence of spiked eigenvalues and linear spectral
  statistics for large sample covariance matrices.
\newblock \emph{The Annals of Statistics}, 50\penalty0 (4):\penalty0
  2205--2230, 2022.

\end{thebibliography}





\end{document}